\def\on{\bar\rho}
\newtheorem{theorem}{Theorem}[section]
\newtheorem{remark}{Remark}[section]
\newtheorem{definition}{Definition}[section]
\newtheorem{lemma}[theorem]{Lemma}
\newtheorem{proposition}[theorem]{Proposition}
\newcommand{\n}{\rho}
\renewcommand{\div}{ {\rm div }  }
\newcommand{\pa}{\partial}
\renewcommand{\r}{\mathbb{R}}
\newcommand{\ia}{\int_0^T}
\newcommand{\bt}{\begin{theorem}}
\newcommand{\bl}{\begin{lemma}}
\newcommand{\el}{\end{lemma}}
\newcommand{\et}{\end{theorem}}
\newcommand{\ga}{\gamma}
\newcommand{\curl}{{\rm curl} }
\newcommand{\de}{\delta}
\newcommand{\ve}{\varepsilon}
\newcommand{\la}{\label}
\newcommand{\ol}{\overline}
\newcommand{\bn}{\begin{eqnarray}}
\newcommand{\en}{\end{eqnarray}}
\newcommand{\bnn}{\begin{eqnarray*}}
\newcommand{\enn}{\end{eqnarray*}}
\newcommand{\bnnn}{\begin{eqnarray*}}
\newcommand{\ennn}{\end{eqnarray*}}
\newcommand{\ba}{\begin{aligned}}
\newcommand{\ea}{\end{aligned}}
\newcommand{\be}{\begin{equation}}
\newcommand{\ee}{\end{equation}}
\def\O{{\Omega }}
\def\norm[#1]#2{\|#2\|_{#1}}
\newcommand{\si}{\sigma}
\def\la{\label}
\def\na{\nabla}
\def\on{\bar\n}
\title{Global Classical Solutions to the 3D Density-Dependent Viscosity Compressible Navier-Stokes Equations with Navier-Slip Boundary Condition in a Simply Connected Bounded Domain}
\author{
  Yuebo CAO
 \\
   {\normalsize Department of Mathematics, College of Sciences,}\\
{\normalsize Shihezi University,}\\ {\normalsize
Shihezi 832003, P. R. China }}
\date{ }
\begin{document}
\maketitle

 \begin{abstract}
 This paper concerns the global existence for classical solutions problem to the 3D Density-Dependent Viscosity barotropic compressible Navier-Stokes in $\Omega$ with slip boundary condition, where  $\Omega$ is a simply connected bounded $C^{\infty}$ domain in $\r^3$ and its boundary only has a finite number of 2-dimensional connected components. By a series of a priori estimates, we show that the classical solution to the system exists globally in time under the assumption that the initial energy is suitably small. The initial density of  such a classical solution is allowed to have large oscillations and contain vacuum states. We also adopt some new techniques and methods  to obtain necessary a priori estimates, especially the boundary integral terms estimates. This is the first result concerning the global existence of classical solutions to the compressible Navier-Stokes equations with density containing vacuum initially and viscosity coefficients depending on density for general 3D bounded smooth domains.
 \end{abstract}

Keywords: compressible Navier-Stokes equations; Density-Dependent; global existence; a priori estimates; slip boundary condition; vacuum.

\section{Introduction}

We consider the following three-dimensional barotropic compressible Navier-Stokes equations with density-dependent viscosities:
   \be \la{a1}  \begin{cases}\n_t+{\rm div} (\n u)=0,\\
 (\n u)_t+{\rm div}(\n u\otimes u)-\mu\Delta u-\nabla((\lambda+\mu) {\rm
div} u) +\na P(\n) =0, \end{cases}\ee
 where $(x,t)\in\Omega\times (0,T]$, $\Omega$ is a domain in $\r^{3}$, $t\ge 0$ is time,   $x$  is the spatial coordinate. $\rho=\rho(x,t)\geq0, u= (u^1(x,t),u^2(x,t),u^3(x,t))$ and $P(\rho)=a\rho^{\gamma}\,(a>0,\,\gamma>1)$
are the unknown fluid density, velocity and pressure, respectively.

The shear viscosity $\mu$ and the bulk one $\lambda$ satisfy the following restrictions:
\be\la{h3} \mu=\text{constant}>0,\quad \lambda(\rho)=b\rho^\beta,\,\,b=\text{constant}>0,\,\,\beta\in[1,\gamma+1],
\ee
For the initial data,  we require that
\be \la{h2} \n(x,0)=\n_0(x), \quad \n u(x,0)=\n_0u_0(x),\quad x\in \Omega.\ee
In addition, the system is solved subject to Navier-slip boundary condition
\be \la{ch1} u\cdot n=0\,\, \text{and} \,\,\,\curl u\times n=0 \,\,\,\text{on} \,\,\,\partial\Omega,\ee
where $n=(n^1(x),n^2(x),n^3(x))$ is the unit outward normal vector of $x\in\partial\Omega$.

When $\lambda$ and $\mu$ are both positive constants, there is a huge literature on the studies of the well-posedness of solutions to the multi-dimensional compressible Navier-Stokes equations. Nash \cite{Na}, Itaya \cite{itaya} and Tani \cite{TA1} established the local well-posedness theory of classical solutions to both initial value and initial-boundary-value problems in the absence of vacuum. The first pioneering well-known theory of the global well-posedness of classical solutions is due to Matsumura and Nishida \cite{M1}. Later, when the initial energy is suitably small, Huang et al.\cite{hlx1} and Li and Xin \cite{lixinzhou} obtained the global well-posedness of classical solutions to the three-dimensional and two-dimensional isentropic compressible Navier-Stokes equations respectively.

For the case that $\lambda$ and $\mu$ are not constants, that is, $\lambda$ and $\mu$ are both the power function of density, there are extensive studies concerning on the one-dimensional isentropic Navier-Stokes equations under certain restrictions of $\lambda$ and $\mu$. For details, the readers can refer to \cite{Bresch1,Bresch2,Ding,JXZ,LXY,YYZ,ZF} and the references therein. For two-dimensional case, Vaigant and Kazhikhov \cite{Vka1} first proposed and investigated the Navier-Stokes equations when $\mu$ is a positive constant and $\lambda=\rho^\beta$. They obtained a unique global strong solution provided that $\beta>3$ and $\Omega$ is bounded. Later, Huang and Li \cite{hlx1} relaxed the condition $\beta>3$ to the one that $\beta>\frac{4}{3}$ and studied the large-time behavior of the solutions. Huang and Li \cite{liliang} considered the local classical solutions to the cauchy problem of the two-dimensional barotropic compressible Navier-Stokes equations with vacuum in weighted spaces.

We can give different boundary conditions for the Navier-Stokes equations when $\Omega$ is a bounded domain in $\r^2$ or $\r^3$. One of the most common boundary conditions called no-slip boundary condition (i.e., $u = 0$ on $\partial \Omega$) was given by G. Stokes in 1845. Another important boundary condition is Navier-type slip boundary condition,
\be \la{Navi}
u \cdot n = 0, \,\,(D(u)\,n+ \vartheta u)\cdot\tau=0 \,\,\,\text{on}\,\,\, \partial\Omega,
\ee
where $D(u) = (\nabla u+(\nabla u)^{tr})/2$ is the shear stress, $\vartheta$ is a scalar friction function, $(D(u)\,n+ \vartheta u)\cdot\tau$ is the projection of tangent plane of $(D(u)\,n+ \vartheta u)$ on $\partial\Omega$. Navier \cite{Nclm1} first proposed this boundary condition which was followed by numerical studies for fluid mechanical problems, such as \cite{Cpfc1, Itt2, se2} and the references therein. the first rigorous analysis of the Navier-Stokes equations with Naiver type
slip boundary condition is due to Solonnikov and \v{S}\v{c}adilov \cite{Sva1}. They studied the stationary linearized Navier-Stokes system under the boundary
condition:
\be \la{Navi0}
u \cdot n = 0,\,\, (D(u)\,n)\cdot\tau=0 \,\,\,\text{on}\,\,\, \partial\Omega.
\ee
 Vaigant $\&$ Kazhikhov \cite{Vka1} established global calssical large solutions of \eqref{a1} with the boundary condition $u\cdot n=0,\,\,\curl u=0$ on $\partial\Omega$ when $\lambda=\rho^\beta,\,\beta>3$ and $\Omega=[0,1]\times [0,1]$. Hoff \cite{Ho3} studied the global existence of weak solution with the Navier-type  slip boundary condition on the half space in $\r^3$. It is worth noting that in \cite{Vka1, Ho3}, the initial density is away from vacuum and the boundary of $\Omega$ is flat. Recently, when $\lambda$ and $\mu$ are both positive constants, Cai-Li \cite{CCL1} obtained the global classical solutions to the 3D compressible Navier-Stokes equations with Navier-type slip boundary condition in general 3D bounded smooth domains, where the initial density may contain vacuum. However, for the Navier-Stokes equations in general 3D bounded domains (the boundary is not necessary flat), when viscosity coefficients satisfy \eqref{h3} and initial density may contain vacuum, it seems to be no relevant results. This is exactly what we want to study in this paper.


\begin{definition}
we say that $\Omega$ is simply connected if the first Betti number of $\Omega$ in $\r^3$ vanishes, that is, any simple closed curve in $\Omega$ can be contracted to a point in $\Omega$. If the second Betti number of $\Omega$ is zero, we say that $\Omega$ has no holes.
\end{definition}

For   integer $k$ and $1\leq q<+\infty$, $W^{k,q}(\Omega)$ is the standard Sobolev spaces and  $$ W_0^{1,q}(\Omega)\triangleq\{u\in W^{1,q}(\Omega)~\text{:}~u~ \text{is equipped with zero trace on } \partial{\Omega}\}.$$

Set
$$ \int f dx\triangleq\int_{\Omega}fdx,$$
and
\bnn \bar{f}\triangleq\frac{1}{|\Omega|}\int_{\Omega}f  dx,\enn
where $|\Omega|$ is the measure of $\Omega.$

For simplicity, we denote $L^q(\Omega)$, $W^{k,q}(\Omega)$, $H^k(\Omega) \triangleq W^{k,2}(\Omega),$ and $H_0^1(\Omega)\triangleq W_0^{1,2}(\Omega)$ by $L^q$,  $W^{k,q}$, $H^k$, and $H^1_0$ respectively.

For two $3\times 3$  matrices $A=\{a_{ij}\},\,\,B=\{b_{ij}\}$, we define
 $$ A\colon  B\triangleq \text{tr} (AB)=\sum\limits_{i,j=3}^{n}a_{ij}b_{ji}.$$


The initial total energy of (\ref{a1})   is defined as
\be \la{c0}
C_0 \triangleq\int_{\Omega}\left(\frac{1}{2}\n_0|u_0|^2 +G(\rho_0) \right)dx.
\ee

 The  main purpose of this paper is to establish the following global existence   of classical solutions of $\eqref{a1}$-$\eqref{ch1}$ in a gneral smooth bounded domain $\Omega\subset\r^3.$
\begin{theorem}\la{th1}  Let $\Omega$ be a simply connected bounded domain in $\r^3$ and its $C^{\infty}$ boundary $\partial\Omega$ has a finite number of 2-dimensional connected components. For some given constant $M>0$ (not necessarily small), suppose that the initial data $(\n_0,u_0)$ satisfy
\be \la{dt1}   u_0\in
H^3,\quad (\rho_0,P(\rho_0))\in  H^{3},\ee
\be\la{dt2} 0\leq\rho_0\leq\hat{\rho},~~\|\nabla u_0\|_{L^2}^2\leq M, \ee
and the compatibility condition
\be\la{dt3}
-\mu\triangle u_0-\nabla((\lambda(\rho_0)+\mu)\div u_0) + \nabla P(\rho_0) = \rho_0g,
\ee
for some  $ g\in L^2.$
Then there exists a positive constant $\ve$ depending only on  $\mu$, $\lambda$, $\ga$, $a$, $\hat{\rho}$, $\Omega$ and $M$  such that
\be\la{dt30}
C_0\leq\varepsilon, \ee
then the system \eqref{a1}-\eqref{ch1} has a unique global classical solution $(\rho,u)$ in $\Omega\times(0,\infty)$ satisfying for any $0<\tau<T<\infty$,
\be\la{dt5}
  0\le\n(x,t)\le 2\hat{\rho},\quad  (x,t)\in \O\times(0,\infty),
\ee
\be\la{dt6}\begin{cases}
(\rho,P)\in C([0,T];H^{3} ),\\u\in C([0,T];H^3 )\cap  L^2(0,T; H^{4})\cap L^\infty(\tau,T;H^4),\\
u_t\in L^{\infty}(0,T; H^1)\cap L^2(0,T; H^2)\cap L^\infty(\tau,T;H^2)\cap H^1(\tau,T;H^1),\\   \sqrt{\rho}u_t\in L^\infty(0,T;L^2).
\end{cases}\ee
 and that for any $0<T<\infty,$
\be\la{dt55}
  \tilde{C}(T)\inf_{x\in\Omega}\rho_0(x)\le\n(x,t)\le 2\hat{\n},\quad  (x,t)\in \O\times[0,T],
\ee
some positive constant $\tilde{C}(T)$ depending only on $T,$ $\mu,$ $\lambda,$ $\ga,$  $a,$ $\hat\rho,$ $\Omega$ and $M$.
Moreover,  for any $r\in [1,\infty)$ and $p\in [1,6],$ there exist positive constants $C$ and $\tilde{\eta}$ depending only  on $\mu,$  $\lambda,$  $\gamma,$ $a$,   $\hat{\rho}$, $M, \bar{\rho}_0$,  $\Omega$, $r$, and $p$   such that
\be  \la{qa1w} \left(\|\rho-\bar{\rho}_0\|_{L^r}+\|  u\|_{W^{1,p}} +\|\sqrt{\rho}\dot{u}\|^2_{L^2}\right)\leq Ce^{-\tilde{\eta} t}.\ee
\end{theorem}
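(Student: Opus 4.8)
The strategy is the now-classical three-step program for small-energy global solutions: (i) start from the local classical solution allowing vacuum; (ii) derive global-in-time a priori bounds on $[0,T]$ independent of $T$, closed by a continuity argument under \eqref{dt30}; (iii) read off \eqref{dt6}, \eqref{dt55} and \eqref{qa1w} from those bounds. For (i) I would quote the local existence and uniqueness of classical solutions to \eqref{a1}--\eqref{ch1} with $\n_0$ possibly vanishing and satisfying \eqref{dt3}, which yields a unique solution on a maximal interval $[0,T_*)$; it then suffices to bound the relevant norms uniformly on $[0,T_*)$. To run the continuity argument I would introduce auxiliary functionals — a first-order quantity such as $\sup_{[0,T]}\int\n|u|^2$ with its time-integrated dissipation, a second-order quantity such as $\sup_{[0,T]}\int|\na u|^2+\int_0^T\!\!\int\n|\dot u|^2$, and the pointwise bound $\sup_{[0,T]}\|\n\|_{L^\infty}\le2\hat\n$ — assume they hold on $[0,T]$ with a factor $2$, and show that $C_0\le\ve$ forces them to hold with the factor $1$.

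For the a priori estimates, the low-order layer consists of: the basic energy identity
\be\la{pp1}
\sup_{[0,T]}\int\Big(\tfrac12\n|u|^2+G(\n)\Big)+\int_0^T\!\!\int\big(\mu|\na u|^2+(\lambda(\n)+\mu)(\div u)^2\big)\le C_0,
\ee
which is clean because, writing $-\mu\Delta u=-\mu\na\div u+\mu\curl\curl u$, the boundary contributions produced on testing the momentum equation by $u$ all vanish by \eqref{ch1}; the effective viscous flux $F\triangleq(2\mu+\lambda(\n))\div u-(P(\n)-\overline{P(\n)})$ and the vorticity $\o\triangleq\curl u$, for which $\n\dot u=\na F-\mu\curl\o$, so that $\Delta F=\div(\n\dot u)$ and $\mu\Delta\o=\curl(\n\dot u)$, giving $\|\na u\|_{L^p}\le C(\|F\|_{L^p}+\|\o\|_{L^p}+\|P(\n)-\overline{P(\n)}\|_{L^p})$ via the Hodge-type elliptic estimates compatible with \eqref{ch1} on the simply connected $\O$; and the momentum equation tested by $\dot u=u_t+u\cdot\na u$, which is the source of the estimate for $\sup_{[0,T]}\int|\na u|^2+\int_0^T\!\!\int\n|\dot u|^2$.

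The last estimate is where I expect the main obstacle. Testing $\n\dot u=\na F-\mu\curl\o$ (equivalently, the momentum equation) by $\dot u$ over a \emph{curved} domain generates boundary integrals $\int_{\pa\O}(\ldots)$ that do not vanish; using $u\cdot n=0$ one has identities such as $(\na u\,n)\cdot u=-(u\cdot\na n)\cdot u$ on $\pa\O$, so these terms reduce to integrals of $|u|^2$ weighted by the second fundamental form of $\pa\O$, which are then absorbed via the trace inequality $\|u\|_{L^2(\pa\O)}^2\le\eta\|\na u\|_{L^2}^2+C_\eta\|u\|_{L^2}^2$ together with \eqref{pp1}; the density-dependent $\lambda(\n)=b\n^\beta$ enters through extra terms $\na\lambda(\n)\,\div u$, handled using the pointwise density bound, interpolation, and the restriction $\beta\le\ga+1$, which keeps these viscosity-gradient terms dominated by pressure-controlled quantities. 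Once the first- and second-order functionals are closed, the pointwise bound $\n\le2\hat\n$ follows from a Zlotnik-type ODE argument along particle paths applied to the equation for a primitive of $(2\mu+\lambda(\n))/\n$, namely $\tfrac{d}{dt}\big(2\mu\log\n+\tfrac{b}{\beta}\n^\beta\big)=-F-(P(\n)-\overline{P(\n)})$, using that $\|F\|_{L^1(0,T;L^\infty)}$ is small; a Bresch--Desjardins-type entropy estimate can be used in addition to control $\|\na\n\|$.

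With the low-order estimates in hand I would then: (a) differentiate the momentum equation in $t$, test by $\dot u_t$, and bootstrap through the elliptic system for $u$ and the transport equations for $(\n,P(\n))$ to obtain \eqref{dt6}, the $\tau$-weighted $H^4$ bounds absorbing the vacuum-induced loss at $t=0$; (b) obtain the lower bound \eqref{dt55} by integrating $\tfrac{d}{dt}\n=-\n\,\div u$ along characteristics, since $\div u\in L^1(0,T;L^\infty)$; and (c) prove \eqref{qa1w} by showing that a functional combining the basic energy, $\|\na u\|_{L^2}^2$ and $\|\sqrt{\n}\dot u\|_{L^2}^2$ satisfies $\tfrac{d}{dt}\Phi+\delta\Phi\le0$ once $C_0\le\ve$ (the Poincar\'e-type inequalities needed here are available because $\bar\n=\bar\n_0$ is conserved), giving exponential decay, then propagating it to $\|\n-\bar\n_0\|_{L^r}$ and $\|u\|_{W^{1,p}}$ by interpolation against the uniform higher-order bounds. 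Uniqueness in the class \eqref{dt6} is standard.
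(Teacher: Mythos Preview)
Your overall architecture matches the paper's: local solution, continuity argument on time-weighted functionals closing under $C_0\le\ve$, Zlotnik-type upper bound on $\n$, then higher-order bootstrap and exponential decay via a Lyapunov functional. Three points, however, deserve correction or sharpening.

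First, the Bresch--Desjardins entropy is \emph{not} available here: the BD structure requires $\lambda(\n)=2(\n\mu'(\n)-\mu(\n))$, which for constant $\mu$ forces $\lambda=-2\mu<0$, contradicting \eqref{h3}. The paper never uses it. Control of $\|\na\n\|_{L^6}$ (needed for the $H^2$/$H^3$ layer) is obtained instead by differentiating the continuity equation, writing $\na\div u=\na\big((F+P-\bar P)/(\lambda(\n)+2\mu)\big)$, and closing a Gronwall loop with a Beale--Kato--Majda-type logarithmic bound $\|\na u\|_{L^\infty}\le C(\|\div u\|_{L^\infty}+\|\curl u\|_{L^\infty})\ln(e+\|\na^2u\|_{L^6})+C$; this is the substitute for BD and you should plan on it.

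Second, the continuity argument in the paper is run on $\sigma$-weighted quantities, $\sigma(t)=\min\{1,t\}$: specifically $A_1=\sup\sigma\|\na u\|_{L^2}^2+\int\!\sigma\n|\dot u|^2$, $A_2=\sup\sigma^3\|\sqrt\n\dot u\|_{L^2}^2+\int\!\sigma^3|\na\dot u|^2$, and an unweighted $A_3=\sup_{[0,\sigma(T)]}\|\na u\|_{L^2}^2$. The weights are essential to absorb the loss at $t=0$ caused by vacuum; your description of the functionals omits them.

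Third, your treatment of the boundary terms is right at the first level (testing by $\dot u$), but at the second level (applying $\dot u^j[\pa_t+\div(u\,\cdot)]$ to the momentum equation) the dangerous term is $\int_{\pa\O}F_t\,\dot u\cdot n\,ds$, which cannot simply be absorbed by a trace inequality. The paper rewrites it as $-\big(\int_{\pa\O}(u\cdot\na n\cdot u)F\,ds\big)_t$ plus remainders, then converts the surface remainders to volume integrals via $u=n\times u^\perp$ and the divergence theorem; only after this can everything be bounded by $\delta\|\na\dot u\|_{L^2}^2+\ldots$. Plan for this extra step.
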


Then, with the   exponential decay rate  \eqref{qa1w} at hand,  modifying slightly the proof of \cite[Theorem 1.2]{lx},
we can   establish  the following large-time behavior of the gradient of the
density when vacuum states appear initially.
\begin{theorem}\la{th2}
Under the conditions of Theorem \ref{th1}, assume further  that
there exists some point $x_0\in \Omega$ such that $\n_0(x_0)=0.$  Then the unique
global classical solution $(\n,u)$ to the   problem  \eqref{a1}-\eqref{ch1} obtained in
Theorem \ref{th1}  satisfies that for any $\tilde{r}>3,$   there exist positive constants $\hat{C}_1$ and $\hat{C}_2$ depending only  on $\mu$,  $\lambda$,  $\gamma$, $a$,   $\hat{\rho}$, $M,$ $\bar{\n}_0$, $\Omega$ and $\tilde{r}$   such that for any $t>0$,
\be\la{qa2w}\ba \|\na\n (\cdot,t)\|_{L^{\tilde{r}}}\geq \hat{C}_1 e^{\hat{C}_2 t} . \ea\ee
\end{theorem}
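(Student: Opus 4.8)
\noindent\textbf{Proof strategy for Theorem \ref{th2}.}
The plan is to exploit three facts: the vacuum at $x_0$ propagates along the Lagrangian flow, the region where $\n$ lies well below its mean shrinks exponentially by \eqref{qa1w}, and Morrey's embedding converts an exponentially thin ``vacuum gap'' into an exponentially large lower bound for $\|\na\n\|_{L^{\tilde r}}$ precisely because $\tilde r>3$. No new a priori estimate is needed: everything analytic is inherited from Theorem \ref{th1}.

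First I would set up the flow $X(t;x)$ solving $\frac{d}{dt}X(t;x)=u(X(t;x),t)$, $X(0;x)=x$. Since $u\in C([0,T];H^3)\hookrightarrow C([0,T];C^1(\bar\O))$ is Lipschitz in $x$ and $u\cdot n=0$ on $\p\O$, the flow is globally defined, maps $\O$ into itself, and preserves $\p\O$. Rewriting the continuity equation as $\n_t+u\cdot\na\n=-\n\,\div u$ and integrating along $X(\cdot;x_0)$ gives $\n(X(t;x_0),t)=\n_0(x_0)\exp\big(-\int_0^t\div u\,ds\big)=0$ for all $t\ge0$, so the moving point $y_0(t):=X(t;x_0)\in\O$ sits in the vacuum at every time. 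Next, mass conservation gives $\bar\n(\cdot,t)=\bar{\n}_0>0$ (the data being nontrivial), hence on $A_t:=\{x\in\O:\n(x,t)\le\bar{\n}_0/2\}$ one has $|\n-\bar{\n}_0|\ge\bar{\n}_0/2$; combining this with \eqref{qa1w} (say with $r=1$) yields $|A_t|\le Ce^{-\tilde\eta t}$. Because $\p\O$ is $C^\infty$, the uniform interior cone condition gives $|B(y,r)\cap\O|\ge c_\O r^3$ for $y\in\bar\O$ and $r$ below a fixed threshold; since $B(y_0(t),d_t)\cap\O\subseteq A_t$ where $d_t:=\mathrm{dist}\big(y_0(t),\{\n(\cdot,t)\ge\bar{\n}_0/2\}\big)$, this forces $d_t\le Ce^{-\tilde\eta t/3}$ (the bound extends to all $t$, including small $t$, after enlarging $C$, since the set of times with $d_t$ above the threshold is itself bounded).

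Finally I would apply $W^{1,\tilde r}(\O)\hookrightarrow C^{0,1-3/\tilde r}(\bar\O)$ (with constant depending only on $\O$ and $\tilde r$) between $y_0(t)$, where $\n=0$, and a nearest point $z_t\in\bar\O$ with $\n(z_t,t)\ge\bar{\n}_0/2$, which lies at distance exactly $d_t$: then $\frac{\bar{\n}_0}{2}\le\n(z_t,t)-\n(y_0(t),t)\le C\|\na\n(\cdot,t)\|_{L^{\tilde r}}\,d_t^{\,1-3/\tilde r}$, hence $\|\na\n(\cdot,t)\|_{L^{\tilde r}}\ge \hat C_1 e^{\hat C_2 t}$ with $\hat C_2=\frac{\tilde\eta}{3}\big(1-\frac{3}{\tilde r}\big)>0$ and $\hat C_1,\hat C_2$ depending only on the quantities listed in the statement, which is \eqref{qa2w}.

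The main obstacle, such as it is, is not analytic but geometric bookkeeping in passing from a flat half-space to a general curved bounded domain: establishing the volume bound $|B(y,r)\cap\O|\gtrsim r^3$ uniformly over $y\in\bar\O$ near $\p\O$, checking that Morrey's inequality is used with a $t$-independent constant even when $y_0(t)$ or $z_t$ approaches $\p\O$, and arranging that the exponential lower bound holds for every $t>0$ rather than only for $t$ large — which is why one keeps track of the fixed-radius threshold when deriving the decay of $d_t$.
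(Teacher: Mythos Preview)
Your argument is correct. The vacuum--propagation step via the Lagrangian flow is identical to the paper's, and your Chebyshev/cone--condition/Morrey chain is sound: the uniform lower bound $|B(y,r)\cap\O|\ge c_\O r^3$ for $r\le r_0$ is available because $\p\O$ is smooth, the Morrey seminorm estimate $|f(x)-f(y)|\le C(\O,\tilde r)\|\na f\|_{L^{\tilde r}}|x-y|^{1-3/\tilde r}$ on a Lipschitz domain indeed involves only the gradient, and you handle the small--time case properly.

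The paper, however, takes a shorter route after the flow--map step. Instead of tracking the sublevel set $A_t$, its measure, and the distance $d_t$, it applies the Gagliardo--Nirenberg interpolation inequality \eqref{g2} directly to $\n-\bar\n$: since $\n(y_0(t),t)=0$ forces $\|\n-\bar\n\|_{L^\infty}\ge\bar\n_0$, one line gives
\[
\bar\n_0 \le \|\n-\bar\n\|_{C(\bar\O)} \le C\|\n-\bar\n\|_{L^{\tilde q}}^{\tilde\theta}\|\na\n\|_{L^{\tilde r}}^{1-\tilde\theta},
\]
and the exponential decay of the $L^{\tilde q}$ factor from \eqref{qa1w} immediately yields \eqref{qa2w}. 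In effect, the interpolation inequality packages your Chebyshev--plus--Morrey argument into a single estimate; your approach is more geometric and makes the mechanism (the shrinking vacuum neighbourhood) explicit, at the cost of the extra bookkeeping you flag. Both give exponents of the same order in $\tilde\eta$.
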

A few remarks are in order:
\begin{remark} It follows from Sobolev's inequality and \eqref{dt6}$_1$  that \be\la{soh1}  \n,\na \n \in C(\bar\Omega\times [0,T]).\ee
\eqref{dt6}$_2$ and \eqref{dt6}$_3$ show that  \be \la{soh2} u,\na u, \na^2 u, u_t \in C(\bar\Omega\times [\tau,T]),\ee due to the following simple fact: $$L^2(\tau,T;H^1)\cap H^1(\tau,T;H^{-1})\hookrightarrow C([\tau,T];L^2).$$
Finally, by \eqref{a1}$_1,$ we obtain \be \n_t=-u\cdot\na \n-\n\div u\in C(\bar\Omega\times [\tau,T]),\ee which together with \eqref{soh1} and \eqref{soh2} shows that the solution obtained by Theorem \ref{th1} is a classical one.
\end{remark}

\begin{remark}
The conclusion of Theorem \ref{th1} still holds after a little modification of the proof process if the boundary condition \eqref{ch1} is changed to a more general condition
\be \la{ch2} u\cdot n=0\,\, \text{and} \,\,\,\curl u\times n=-A u\,\,\,\text{on} \,\,\,\partial\Omega,\ee
where $n=(n^1(x),n^2(x),n^3(x))$ is the unit outward normal vector of $x\in\partial\Omega$, $ A = A(x)$ is $3\times3$ smooth
and positive semi-definite symmetric matrix defined on $\partial\Omega$.
\end{remark}
\begin{remark} Theorem \ref{th1} is the first result concerning the global existence of the compressible Navier-Stokes equations \eqref{a1} with the density containing vacuum initially and viscosity coefficients satisfying \eqref{h3} for general 3D bounded smooth domains.\end{remark}

We now comment on this paper. Our work is motivated by Li-Xin \cite{jx01}, Li-Liang \cite{liliang} and Cai-Li \cite{CCL1}.
Our research is based on the following three important observations. First, we rewrite $ (\ref{a1})_2 $ as
\be \la{hod1}\ba
\rho\dot{u}=\nabla F - \mu\nabla\times\curl u ,
\ea \ee with
\be \la{dt0} \dot v\triangleq v_t+u\cdot\nabla v,\quad \text{curl} u \triangleq \nabla\times u ,\quad F\triangleq(\lambda(\rho)+2\mu)\,\div u-(P-\Bar P),\ee where $v=(v_1,v_2,v_3)$, $\dot v$ and $F$ are called the
material derivative of $v$ and the effective viscous flux respectively. The second one is that the following inequality
$$\|\nabla u\|_{L^q}\leq C(\|\div u\|_{L^q}+\|\curl u\|_{L^q})\,\,\,\text{for any} \,\,\,q>1,$$ is still hold when $u\in W^{1,q}$ with $u\cdot n=0$ on $\partial\Omega$ and the first Betti number of $\Omega$ vanishes due to \cite{vww}. The last important observation is the following equality:
\be\la{pzw1} u\cdot\nabla u\cdot n=-u\cdot\nabla n\cdot u,\ee
since $u\cdot n=0$ on $\partial\Omega$.

The structure of this paper is as follows. First, we give some known facts and elementary inequalities which will be used in Section 2. In Section 3 and Section 4, some necessary a priori estimates on classical solutions will be obtained. Finally, Theorems \ref{th1} will be proved in Section 5.
\section{Preliminaries}\la{se2}

In this section, we recall some functional spaces, known facts and elementary inequalities which will be used later.

First, similar to the proof of \cite[Theorem 1.4]{hxd1}, we have the local existence of strong and classical solutions.
\begin{lemma}\la{loc1} Let $\Omega$ be as in Theorem \ref{th1}. assume that $(\n_0,u_0)$ satisfies \eqref{dt1} and \eqref{dt3}. Then there exist a small time $T>0$ and a unique classical solution $(\n,u)$ to the problem \eqref{a1}-\eqref{ch1} on $\Omega\times(0,T]$ satisfying for any $ \tau\in(0,T),$
\be\nonumber\begin{cases}
 (\rho,P)\in C([0,T];H^{3} ),\\u\in C([0,T];H^3 )\cap  L^2(0,T; H^{4}),\\
u_t\in L^{\infty}(0,T; H^1)\cap L^2(0,T; H^2),\\
\sqrt{\rho}u_t\in L^\infty(0,T;L^2).\\
\end{cases}\ee \end{lemma}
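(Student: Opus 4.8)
The plan is to run the standard linearization-and-iteration scheme, adapting the argument of \cite{hxd1} to the density-dependent viscosity $\lambda(\n)=b\n^\beta$ and the Navier-slip boundary condition \eqref{ch1}. Given $u^{k-1}$ (with $u^0$ chosen suitably), let $\n^k$ solve the linear continuity equation $\n^k_t+\div(\n^k u^{k-1})=0$, $\n^k(\cdot,0)=\n_0$, and let $u^k$ solve the linear system
\[
\n^k u^k_t+\n^k u^{k-1}\cdot\na u^k-\mu\lap u^k-\na\big((\lambda(\n^k)+\mu)\div u^k\big)=-\na P(\n^k)
\]
with $u^k\cdot n=0$, $\curl u^k\times n=0$ on $\pa\O$ and $u^k(\cdot,0)=u_0$. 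Transport estimates along the flow of $u^{k-1}$ preserve $\n^k\ge 0$ and, since $\|\n^k(\cdot,t)\|_{L^\infty}\le\|\n_0\|_{L^\infty}\exp\int_0^t\|\div u^{k-1}\|_{L^\infty}$, give on a short interval $[0,T]$ (with $T$ depending only on the data) the uniform bounds $0\le\n^k\le 2\hat\n$ and $\sup_{[0,T]}\|\n^k\|_{H^3}\le C$. Because $\n^k$ can vanish, the $u^k$-equation is a degenerate parabolic system; the role of the compatibility condition \eqref{dt3} is that, evaluating the $u^k$-equation at $t=0$, it gives $\sqrt{\n_0}\,u^k_t(\cdot,0)=-\sqrt{\n_0}\,(g+u_0\cdot\na u_0)\in L^2$, a bound uniform in $k$.

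Next I would establish the $k$-independent a priori estimates. The basic energy estimate comes from testing the $u^k$-equation with $u^k$. Differentiating it in $t$ and testing with $u^k_t$ controls $\sup_{[0,T]}\|\sqrt{\n^k}u^k_t\|_{L^2}^2+\int_0^T\|\na u^k_t\|_{L^2}^2\,dt$: the initial term is handled by \eqref{dt3}, the viscous principal part produces $-\mu\|\na u^k_t\|_{L^2}^2$ plus a lower-order curvature boundary term (using $u^k_t\cdot n=0$ and $\curl u^k\times n=0$), and the density-dependent terms $\na(\lambda(\n^k)_t\div u^k)$ and $\na(\lambda(\n^k)\div u^k_t)$ are controlled or absorbed using the transport bound $\na\lambda(\n^k)\in L^\infty(0,T;H^2)$ together with elliptic estimates for the Lam\'e-type operator $-\mu\lap-\na((\lambda+\mu)\div\,\cdot\,)$ under the boundary conditions $u\cdot n=0$, $\curl u\times n=0$ (available since $\pa\O\in C^\infty$). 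Feeding $\n^k u^k_t+\n^k u^{k-1}\cdot\na u^k+\na P(\n^k)\in L^\infty(0,T;L^2)\cap L^2(0,T;H^1)$ into those elliptic estimates, and then differentiating once more and using $(\n_0,P(\n_0),u_0)\in H^3$, yields, after possibly shrinking $T$, $u^k\in C([0,T];H^3)\cap L^2(0,T;H^4)$, $u^k_t\in L^\infty(0,T;H^1)\cap L^2(0,T;H^2)$, $\sqrt{\n^k}u^k_t\in L^\infty(0,T;L^2)$, all uniformly in $k$.

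Then I would pass to the limit. Estimating the successive differences $\n^{k+1}-\n^k$ in $L^\infty(0,T;L^2)$ and $u^{k+1}-u^k$ in $L^\infty(0,T;L^2)\cap L^2(0,T;H^1)$, a Gronwall argument on a (possibly smaller) interval shows $(\n^k,u^k)$ is Cauchy there; interpolating this strong convergence against the uniform bounds in the strong norms identifies the limit $(\n,u)$ as a solution with the asserted regularity, and weak lower semicontinuity transfers the estimates. Uniqueness follows from the same difference estimate applied to two solutions. No time weight $\tau$ is needed here because $u_0\in H^3$ and \eqref{dt3} hold; that $(\n,u)$ is genuinely classical then follows as in the Remark after Theorem \ref{th1}.

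The step I expect to be the main obstacle is closing the $\sqrt{\n^k}u^k_t$-estimate and the subsequent elliptic bootstrap in the presence of \emph{both} vacuum and density-dependent viscosity: one must control $\na\lambda(\n^k)=b\beta(\n^k)^{\beta-1}\na\n^k$ and $\na^2\lambda(\n^k)$, which couples the $H^3$- and $H^4$-regularity of $u^k$ back to the $H^3$-norm of $\n^k$ propagated by transport; preventing this loop from blowing up is precisely what forces the short time $T$. The $H^3$/$H^4$ elliptic estimates for the Lam\'e operator under the Navier-slip boundary condition are routine for smooth $\pa\O$, but they must be invoked carefully since they already carry the curvature boundary contributions that the later sections treat in detail.
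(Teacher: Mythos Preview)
Your proposal is correct and is essentially the approach the paper intends: the paper does not give an independent proof of this lemma but simply refers to \cite{hxd1}, stating that the argument there carries over with minor modifications to the present setting (density-dependent $\lambda(\rho)$ and the slip boundary \eqref{ch1}). Your linearization--iteration scheme, the use of the compatibility condition \eqref{dt3} to initialize $\sqrt{\rho}u_t$, the elliptic estimates for the Lam\'e operator under $u\cdot n=0$, $\curl u\times n=0$, and the closing argument via transport control of $\|\rho^k\|_{H^3}$ are exactly the ingredients of \cite{hxd1}, so you have in fact written out more than the paper does.
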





First, the well-known Gagliardo-Nirenberg inequality (see \cite{nir}) will be used more than once.
\begin{lemma}
[Gagliardo-Nirenberg]\la{l1} Assume that $\Omega$ is a bounded Lipschitz domain in $\r^3$. For  $p\in [2,6],\,q\in(1,\infty), $ and
$ r\in  (3,\infty),$ there exist two generic
 constants
$C_1,\,\,C_2>0$ which may depend  on $p$, $q$, $r$ and $\Omega$ such that for any  $f\in H^1({\O }) $
and $g\in  L^q(\O )\cap W^{1,r}(\O), $
\be\la{g1}\|f\|_{L^p(\O)}\le C_1 \|f\|_{L^2}^{\frac{6-p}{2p}}\|\na
f\|_{L^2}^{\frac{3p-6}{2p}}+C_2\|f\|_{L^2} ,\ee
\be\la{g2}\|g\|_{C\left(\ol{\O }\right)} \le C_1
\|g\|_{L^q}^{q(r-3)/(3r+q(r-3))}\|\na g\|_{L^r}^{3r/(3r+q(r-3))} + C_2\|g\|_{L^2}.
\ee
Moreover, if $f\cdot n|_{\partial\Omega}=0,\,\,\,g\cdot n|_{\partial\Omega}=0$, then the constant $C_2=0.$
\end{lemma}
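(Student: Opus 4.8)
The plan is to reduce both estimates to the classical Gagliardo--Nirenberg inequalities on $\r^3$ by means of a bounded linear extension. Since $\Omega$ is a bounded Lipschitz domain, fix a Stein--Calder\'on extension operator $E$ that is simultaneously bounded $L^s(\Omega)\to L^s(\r^3)$ and $W^{1,s}(\Omega)\to W^{1,s}(\r^3)$ for every exponent $s$ appearing below, with $Ef$ supported in a fixed bounded neighbourhood of $\bar\Omega$. For \eqref{g1} I would apply the Euclidean interpolation inequality $\|h\|_{L^p(\r^3)}\le C\|h\|_{L^2(\r^3)}^{(6-p)/(2p)}\|\na h\|_{L^2(\r^3)}^{(3p-6)/(2p)}$ for $p\in[2,6]$ (the exponents being forced by dimensional scaling) to $h=Ef$, then bound $\|Ef\|_{L^2(\r^3)}\le C\|f\|_{L^2(\Omega)}$ and $\|\na Ef\|_{L^2(\r^3)}\le C\|f\|_{H^1(\Omega)}\le C(\|f\|_{L^2}+\|\na f\|_{L^2})$, and finally use the elementary inequality $(a+b)^\sigma\le a^\sigma+b^\sigma$ with $\sigma=(3p-6)/(2p)\in[0,1]$ to split the right-hand side into the two terms of \eqref{g1}. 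For \eqref{g2} the scheme is the same, now with the Morrey-type Euclidean inequality $\|h\|_{L^\infty(\r^3)}\le C\|h\|_{L^q(\r^3)}^{\alpha}\|\na h\|_{L^r(\r^3)}^{1-\alpha}$, $\alpha=q(r-3)/(3r+q(r-3))$, valid for $r>3$ and $q\in(1,\infty)$ (equivalently obtained from the ball-average bound $|h(x)|\le C\rho^{-3/q}\|h\|_{L^q(B_\rho(x))}+C\rho^{1-3/r}\|\na h\|_{L^r(B_\rho(x))}$ optimised over $\rho>0$); using $\|g\|_{C(\bar\Omega)}\le\|Eg\|_{L^\infty(\r^3)}$ and $\|\na Eg\|_{L^r(\r^3)}\le C\|g\|_{W^{1,r}(\Omega)}=C(\|g\|_{L^r(\Omega)}+\|\na g\|_{L^r(\Omega)})$, and controlling the surplus $\|g\|_{L^r(\Omega)}$ by interpolation on the bounded domain $\Omega$, all the lower-order contributions can be gathered into $C_2\|g\|_{L^2}$.

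For the final assertion the crucial ingredient is a Poincar\'e-type inequality for fields with vanishing normal trace: if $h\in H^1(\Omega;\r^3)$ with $h\cdot n=0$ on $\partial\Omega$, then $\|h\|_{L^2(\Omega)}\le C_\Omega\|\na h\|_{L^2(\Omega)}$, and similarly $\|h\|_{L^r(\Omega)}\le C_\Omega\|\na h\|_{L^r(\Omega)}$ for $h\in W^{1,r}(\Omega;\r^3)$ with $h\cdot n=0$. These follow from the usual Rellich compactness argument, the only thing to verify being that a constant field $c$ satisfying $c\cdot n\equiv0$ on $\partial\Omega$ must vanish; and this is immediate from the divergence theorem, since $\int_{\partial\Omega}(c\cdot n)(c\cdot x)\,dS=\int_\Omega\div\big((c\cdot x)\,c\big)\,dx=|c|^2|\Omega|$. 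Granting this, for $f$ with $f\cdot n=0$ one has $\|\na Ef\|_{L^2(\r^3)}\le C\|f\|_{H^1(\Omega)}\le C\|\na f\|_{L^2(\Omega)}$, so the Euclidean interpolation step already produces \eqref{g1} with no additive term; the argument for \eqref{g2} is identical, now invoking $\|g\|_{W^{1,r}(\Omega)}\le C\|\na g\|_{L^r(\Omega)}$.

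Since the extension operator and the Euclidean Gagliardo--Nirenberg inequalities (the latter being exactly \cite{nir}) are classical, there is no deep obstacle here. The points demanding care are: tracking the interpolation exponents and checking they stay in $[0,1]$ throughout the stated ranges of $p,q,r$; ensuring that every term generated by the extension is absorbed into $C_2\|f\|_{L^2}$ (rather than, e.g., $C_2\|\na f\|_{L^2}$), which is where boundedness of $\Omega$ enters via the Lebesgue embeddings $L^r(\Omega)\hookrightarrow L^2(\Omega)$; and, for the last part, establishing the Poincar\'e inequality for fields with vanishing normal component, which I regard as the one mildly delicate ingredient, its proof resting on the divergence-theorem identity above.
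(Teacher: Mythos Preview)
The paper does not prove this lemma at all: it is stated as the ``well-known Gagliardo--Nirenberg inequality (see \cite{nir})'' and left without argument. So there is no proof in the paper to compare against; your proposal supplies what the authors take for granted.

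Your outline is sound. The extension-plus-Euclidean-GN scheme is the standard way to transfer \eqref{g1}--\eqref{g2} to a bounded Lipschitz domain, and the exponents you write down are correct. Your treatment of the case $f\cdot n=g\cdot n=0$ via a Poincar\'e inequality for fields with vanishing normal trace is also right, and the divergence-theorem identity you give is an elegant way to rule out nonzero constant competitors in the compactness argument.

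One step deserves a little more than the phrase ``by interpolation on the bounded domain $\Omega$'': in \eqref{g2} the extension produces a term $C\|g\|_{L^q}^{\alpha}\|g\|_{L^r}^{1-\alpha}$, and neither $\|g\|_{L^q}$ nor $\|g\|_{L^r}$ is in general dominated by $\|g\|_{L^2}$ when $q,r>2$. The way through is to interpolate $\|g\|_{L^q}\le \|g\|_{L^\infty}^{1-2/q}\|g\|_{L^2}^{2/q}$ (or $\|g\|_{L^q}\le C_\Omega\|g\|_{L^2}$ if $q<2$) and $\|g\|_{L^r}\le \|g\|_{L^\infty}^{1-2/r}\|g\|_{L^2}^{2/r}$, obtaining a bound of the form $C\|g\|_{L^\infty}^{\theta}\|g\|_{L^2}^{1-\theta}$ with $\theta<1$, and then absorb the $\|g\|_{L^\infty}$ factor into the left-hand side by Young's inequality. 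This is presumably what you intend, but since it is the one place where ``all the lower-order contributions can be gathered into $C_2\|g\|_{L^2}$'' is not automatic, it is worth spelling out.
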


The following Zlotnik  inequality will be used to get the uniform (in time) upper bound of the density $\n.$
\begin{lemma}[\cite{zl1}]\la{le1}   Suppose the function $y$ satisfy
\bnn y'(t)= g(y)+b'(t) \mbox{  on  } [0,T] ,\quad y(0)=y^0, \enn
with $ g\in C(R)$ and $y, b\in W^{1,1}(0,T).$ If $g(\infty)=-\infty$
and \be\la{a100} b(t_2) -b(t_1) \le N_0 +N_1(t_2-t_1)\ee for all
$0\le t_1<t_2\le T$
  with some $N_0\ge 0$ and $N_1\ge 0,$ then
\bnn y(t)\le \max\left\{y^0,\zeta_0 \right\}+N_0<\infty
\mbox{ on
 } [0,T],
\enn
  where $\zeta_0$ is a constant such
that \be\la{a101} g(\zeta)\le -N_1 \quad\mbox{ for }\quad \zeta\ge \zeta_0.\ee
\end{lemma}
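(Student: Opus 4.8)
\medskip
\noindent{\bf Proof plan.}
The plan is a classical ``last-exit-time'' (barrier) argument; there is no serious obstacle here, only two regularity points to keep straight. First I would fix a constant $\zeta_0$ satisfying \eqref{a101} (one exists because $g(\infty)=-\infty$: for any $N_1\ge0$ the relation $\lim_{s\to\infty}g(s)=-\infty$ forces $g(\zeta)\le-N_1$ for all large $\zeta$). Next I would record that, since $y,b\in W^{1,1}(0,T)$, both are absolutely continuous on $[0,T]$; in particular $y$ is continuous and bounded there, and from the equation itself $g(y(\cdot))=y'-b'\in L^1(0,T)$, so the fundamental theorem of calculus is available on every subinterval.

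Then, for a fixed $t\in(0,T]$, I would set $K:=\max\{y^0,\zeta_0\}$ and split into two cases. If $y(s)>\zeta_0$ for every $s\in[0,t]$, then in particular $y^0>\zeta_0$, so $K=y^0$, and since $g(y(s))\le-N_1$ throughout $[0,t]$ by \eqref{a101} while $b(t)-b(0)\le N_0+N_1t$ by \eqref{a100},
\[
y(t)=y^0+\int_0^t g(y(s))\,ds+\big(b(t)-b(0)\big)\le y^0-N_1t+N_0+N_1t=K+N_0 .
\]
Otherwise the set $S:=\{s\in[0,t]:y(s)\le\zeta_0\}$ is nonempty and closed (being $y^{-1}((-\infty,\zeta_0])\cap[0,t]$), hence has a largest element $t_1$. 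If $t_1=t$ then $y(t)\le\zeta_0\le K+N_0$; if $t_1<t$ then $y(t_1)\le\zeta_0$ while $y(s)>\zeta_0$, so $g(y(s))\le-N_1$, for every $s\in(t_1,t]$, whence, using \eqref{a100}--\eqref{a101} on $[t_1,t]$,
\[
y(t)=y(t_1)+\int_{t_1}^t g(y(s))\,ds+\big(b(t)-b(t_1)\big)\le\zeta_0-N_1(t-t_1)+N_0+N_1(t-t_1)=\zeta_0+N_0\le K+N_0 .
\]
Since $t\in(0,T]$ was arbitrary and $y(0)=y^0\le K+N_0$ trivially, this gives $y(t)\le\max\{y^0,\zeta_0\}+N_0<\infty$ on $[0,T]$, as claimed.

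The points that actually need care, as flagged above, are: (a) the integrability of $g(y(\cdot))$, which is \emph{forced by the equation} rather than assumed; (b) the continuity of $y$, so that the last-exit time $t_1$ is genuinely attained and satisfies $y(t_1)\le\zeta_0$; and (c) that $\zeta_0$ can be chosen depending only on $g$ and $N_1$ — not on $t$ or on the particular solution $y$ — which is exactly what makes the resulting bound uniform on $[0,T]$. The case distinction being exhaustive, nothing further is required.
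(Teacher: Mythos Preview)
Your argument is correct. Note, however, that the paper does not actually prove this lemma: it is quoted verbatim from Zlotnik \cite{zl1} and stated without proof, so there is no ``paper's own proof'' to compare against. What you have supplied is precisely the standard last-exit-time proof of Zlotnik's inequality, and all three regularity points you flag---the $L^1$-integrability of $g(y(\cdot))$ forced by the equation, the absolute continuity of $y$ guaranteeing that the last exit time is attained, and the choice of $\zeta_0$ depending only on $g$ and $N_1$---are handled cleanly.
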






The following two lemmas are given in \cite{vww,CANEHS}.
\begin{lemma}   \la{crle1}
Let $k\geq0$ be a integer, $1<q<+\infty$, and assume that $\Omega$ is a simply connected bounded domain in $\r^3$ with $C^{k+1,1}$ boundary $\partial\Omega$. Then for $v\in W^{k+1,q}$ with $v\cdot n=0$ on $\partial\Omega$, it holds that
$$\|v\|_{W^{k+1,q}}\leq C(\|\div v\|_{W^{k,q}}+\|\curl v\|_{W^{k,q}}).$$
In particular, for $k=0$, we have
$$\|\nabla v\|_{L^q}\leq C(\|\div v\|_{L^q}+\|\curl v\|_{L^q}).$$
\end{lemma}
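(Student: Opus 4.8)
The plan is to view this as a Hodge-type elliptic estimate and reduce it, via the Helmholtz decomposition, to two classical Neumann problems for the scalar Laplacian together with one application of the Poincar\'e lemma on a simply connected domain. Write $g:=\div v$ and $\omega:=\curl v$; note that $\div\omega=0$ in $\Omega$ and that the flux compatibilities $\int_\Omega g\,dx=\int_{\partial\Omega}v\cdot n\,dS=0$ and $\int_{\Gamma_j}\omega\cdot n\,dS=0$ on each closed boundary component $\Gamma_j$ hold automatically (the latter because a closed surface has no boundary curve). Since $v\in W^{k+1,q}$ is already given, it is only the \emph{estimate}, not the regularity, that is at stake.

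First I would solve the Neumann problem $\Delta\Phi=g$ in $\Omega$, $\partial_n\Phi=0$ on $\partial\Omega$, which is solvable because $\int_\Omega g\,dx=0$; $L^q$ elliptic regularity on a $C^{k+1,1}$ domain gives $\|\na\Phi\|_{W^{k+1,q}}\le C\|g\|_{W^{k,q}}=C\|\div v\|_{W^{k,q}}$, and by construction $\div(\na\Phi)=g$, $\curl(\na\Phi)=0$, $\na\Phi\cdot n=0$. Set $w:=v-\na\Phi$, so $\div w=0$, $\curl w=\omega$, $w\cdot n=0$ on $\partial\Omega$; it remains to bound $\|w\|_{W^{k+1,q}}$ by $\|\omega\|_{W^{k,q}}$. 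For this I would produce a single field $\psi$ with $\curl\psi=\omega$ in $\Omega$ and $\|\psi\|_{W^{k+1,q}(\Omega)}\le C\|\omega\|_{W^{k,q}(\Omega)}$: extend $\omega$ to a compactly supported solenoidal field $\tilde\omega$ on a large ball (take an arbitrary extension, then correct its divergence by a Bogovskii field supported in the complement of $\bar\Omega$, which is possible because each complementary component carries zero flux), and put $\psi=-\curl N[\tilde\omega]$ with $N$ the Newtonian potential; the identity $\curl\curl=\na\div-\Delta$ together with $\div N[\tilde\omega]=N[\div\tilde\omega]=0$ gives $\curl\psi=\tilde\omega=\omega$ in $\Omega$ and $\div\psi=0$, while the bound follows from Calder\'on--Zygmund estimates for $N$.

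Now $z:=w-\psi\in W^{k+1,q}$ satisfies $\curl z=0$ in $\Omega$; here, and only here, the hypothesis enters: because $\Omega$ is simply connected, the Poincar\'e lemma yields $z=\na h$ with $\na h\in W^{k+1,q}$. Then $\Delta h=\div z=\div w-\div\psi=0$ and $\partial_n h=z\cdot n=-\psi\cdot n$ on $\partial\Omega$, so $h$ solves a Neumann problem with data controlled by the trace of $\psi$, and elliptic regularity gives $\|\na h\|_{W^{k+1,q}}\le C\|\psi\cdot n\|_{W^{k+1-1/q,q}(\partial\Omega)}\le C\|\psi\|_{W^{k+1,q}(\Omega)}\le C\|\omega\|_{W^{k,q}}$. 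Hence $\|w\|_{W^{k+1,q}}\le\|\psi\|_{W^{k+1,q}}+\|\na h\|_{W^{k+1,q}}\le C\|\curl v\|_{W^{k,q}}$, and combining with the first step, $\|v\|_{W^{k+1,q}}\le C(\|\div v\|_{W^{k,q}}+\|\curl v\|_{W^{k,q}})$; the case $k=0$ is the stated $L^q$ inequality.

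The main obstacle is exactly the topological step — that a closed $W^{1,q}$ vector field on $\Omega$ is exact. Equivalently, finiteness of $C$ hinges on the triviality of the space of vector fields $u$ with $\div u=0$, $\curl u=0$ in $\Omega$ and $u\cdot n=0$ on $\partial\Omega$ (harmonic fields realizing $H^1(\Omega;\mr)$), whose dimension is the first Betti number of $\Omega$; for a non–simply connected $\Omega$ the inequality fails outright, so no purely local or elliptic argument can suffice and a global Hodge-theoretic input is unavoidable. A secondary, purely technical nuisance is the low boundary regularity: one must carry out the divergence-corrected solenoidal extension of $\omega$ with sharp $W^{k,q}$ bounds and run both Neumann estimates at the $W^{k+2,q}$ level on a merely $C^{k+1,1}$ domain, keeping track of the trace–multiplier estimate for $\psi\cdot n$. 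A less constructive alternative is to note that the first-order system $\{\div v=g,\ \curl v=\omega,\ v\cdot n=0\}$ is elliptic and satisfies the Lopatinskii--Shapiro complementing condition, so Agmon--Douglis--Nirenberg theory yields $\|v\|_{W^{k+1,q}}\le C(\|g\|_{W^{k,q}}+\|\omega\|_{W^{k,q}}+\|v\|_{L^q})$, after which the $\|v\|_{L^q}$ term is absorbed by a standard compactness argument that again uses the absence of nontrivial harmonic fields of the above type on a simply connected domain; this is essentially the route of \cite{vww}.
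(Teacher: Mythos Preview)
The paper does not prove this lemma at all: it simply quotes it as a known result from \cite{vww,CANEHS} (von Wahl; Aramaki), so there is no ``paper's own proof'' to compare against. Your sketch is a correct and complete reconstruction of the standard Hodge-theoretic argument, and you have correctly isolated the one place where simple connectedness is used (triviality of the tangential harmonic fields, equivalently exactness of closed $W^{1,q}$ vector fields). Your closing remark that the alternative ADN-plus-compactness route is ``essentially the route of \cite{vww}'' is accurate; von Wahl's paper establishes precisely the a priori estimate with the lower-order $\|v\|_{L^q}$ term via the Lopatinskii--Shapiro complementing condition and then removes that term by a compactness/uniqueness argument that invokes the vanishing first Betti number. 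So your constructive Helmholtz-decomposition proof and the cited reference arrive at the same estimate by the two standard paths, and either is acceptable here.
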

\begin{lemma}   \la{crle2}
Let $k\geq0$ be a integer, $1<q<+\infty$. Suppose that $\Omega$ is a bounded domain in $\r^3$ and its $C^{k+1,1}$ boundary $\partial\Omega$ only has a finite number of 2-dimensional connected components. Then for $v\in W^{k+1,q}$ with $v\times n=0$ on $\partial\Omega$, we have
$$\|v\|_{W^{k+1,q}}\leq C(\|\div v\|_{W^{k,q}}+\|\curl v\|_{W^{k,q}}+\|v\|_{L^q}).$$
In particular, if  $\Omega$ has no holes, then
$$\|v\|_{W^{k+1,q}}\leq C(\|\div v\|_{W^{k,q}}+\|\curl v\|_{W^{k,q}}).$$
\end{lemma}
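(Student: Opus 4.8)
The inequality is an elliptic a priori estimate for the overdetermined first–order system $v\mapsto(\div v,\curl v)$ under the boundary condition $v\times n=0$: its principal symbol $v\mapsto(\xi\cdot v,\ \xi\times v)$ is injective for $\xi\neq0$, and $v\times n=0$ is a complementing (Lopatinski--Shapiro) boundary condition for it, so one may invoke the Agmon--Douglis--Nirenberg theory for overdetermined elliptic systems directly. I would instead give a more elementary argument in three steps — $(k,q)=(0,2)$, then $(0,q)$ for $1<q<\infty$, then general $k$ — followed by the removal of the term $\|v\|_{L^q}$ in the ``no holes'' case. For the base case $k=0,\ q=2$, I would start from the pointwise identity, valid for smooth $v$ on $\r^3$,
\[
|\na v|^2=(\div v)^2+|\curl v|^2+\div\!\big((v\cdot\na)v-(\div v)\,v\big),
\]
integrate over $\Omega$, and observe that the divergence term contributes $\int_{\partial\Omega}\big((v\cdot\na)v-(\div v)v\big)\cdot n\,dS$; since $v\times n=0$ means $v=(v\cdot n)\,n$ on $\partial\Omega$ and the tangential divergence of the vanishing tangential part of $v$ along $\partial\Omega$ is zero, this boundary integral collapses to $-\int_{\partial\Omega}H|v|^2\,dS$ with $H$ the mean curvature of $\partial\Omega$. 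Bounding it by $\ve\|\na v\|_{L^2}^2+C_\ve\|v\|_{L^2}^2$ via the trace inequality and absorbing $\ve\|\na v\|_{L^2}^2$ yields $(0,2)$, first for smooth $v$ and then, by density, on $\{v\in H^1:v\times n=0\}$.

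For $k=0$ and general $q$, I would localize with a finite partition of unity $\{\chi_i\}$ subordinate to a cover of $\bar\Omega$ by small interior balls and boundary charts. On an interior piece $w=\chi_i v$ is compactly supported, and from the vector identity $\Delta w=\na\div w-\curl\curl w$ one recovers $\na w$ as a linear combination of compositions of Riesz transforms applied to $\div w$ and $\curl w$; $L^q$-boundedness of the Riesz transforms for $1<q<\infty$ gives $\|w\|_{W^{1,q}}\le C(\|\div w\|_{L^q}+\|\curl w\|_{L^q})$, and expanding $\div w=\chi_i\div v+\na\chi_i\cdot v$, $\curl w=\chi_i\curl v+\na\chi_i\times v$ shows the commutators are controlled by $\|v\|_{L^q}$. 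On a boundary piece I would flatten $\partial\Omega$ using boundary normal (geodesic) coordinates: there the metric is block diagonal at the boundary, so $v\times n=0$ becomes exactly $v_1=v_2=0$ on $\{x_3=0\}$, while $\div$ and $\curl$ acquire variable coefficients that are smooth in $x_3$ and, after centering and freezing at the base point, agree with the Euclidean ones up to a principal part with coefficients $O(\text{chart diameter})$ and lower-order terms. Extending $\chi_i v$ to $\r^3$ by the odd reflection of its first two components and the even reflection of the third produces — precisely because those two components vanish on $\{x_3=0\}$ — a field in $W^{1,q}(\r^3)$ with no surface distribution whose $\div$ and $\curl$ are the corresponding reflections of $\div(\chi_i v),\curl(\chi_i v)$; applying the whole-space estimate and absorbing the frozen-coefficient errors (small on a small chart) and lower-order errors (into $\|v\|_{L^q}$), then summing over $i$, gives $\|v\|_{W^{1,q}}\le C(\|\div v\|_{L^q}+\|\curl v\|_{L^q}+\|v\|_{L^q})$.

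The passage to general $k$ would go by induction. On interior pieces the Riesz-transform identity differentiates freely, so $\|w\|_{W^{k+1,q}}\le C(\|\div w\|_{W^{k,q}}+\|\curl w\|_{W^{k,q}})$ directly. On a flattened boundary piece, tangential difference quotients of $v$ still satisfy $v_1=v_2=0$ on $\{x_3=0\}$ and have $\div,\curl$ bounded in $W^{k-1,q}$ by the data plus (inductively) $\|v\|_{W^{k,q}}$, which controls all tangential derivatives of $v$ of order $k+1$; the derivatives carrying at least one normal derivative are then recovered algebraically — $\partial_{x_3}v_3$ from $\div v$, and $\partial_{x_3}v_1,\partial_{x_3}v_2$ from two components of $\curl v$ — and one iterates in the number of normal derivatives. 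Commutators with the cutoffs and with the diffeomorphism are of order $\le k$ and absorbed. I expect this boundary model problem to be the main obstacle: verifying that the reflection genuinely yields a $\div$ and $\curl$ in the right $W^{k,q}$ space with no spurious boundary term (this is exactly where $v\times n=0$ enters) and controlling the variable coefficients from flattening a curved boundary — equivalently, checking that $v\times n=0$ complements the $(\div,\curl)$ system in the ADN sense.

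Finally, to remove $\|v\|_{L^q}$ when $\Omega$ has no holes, set $\mathcal H=\{v\in W^{k+1,q}:\div v=0,\ \curl v=0\ \text{in}\ \Omega,\ v\times n=0\ \text{on}\ \partial\Omega\}$. On $\mathcal H$ the estimate just proved reads $\|v\|_{W^{k+1,q}}\le C\|v\|_{L^q}$, so by Rellich's theorem the unit ball of $\mathcal H$ is precompact in $W^{k+1,q}$ and $\mathcal H$ is finite-dimensional; in fact $\dim\mathcal H$ equals the second Betti number of $\Omega$ (its number of holes), by the Hodge/de Rham theory of vector fields on a manifold with boundary, which is the content of the results cited in \cite{vww,CANEHS}. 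If $\Omega$ has no holes then $\mathcal H=\{0\}$, and the term $\|v\|_{L^q}$ may be dropped by a compactness--contradiction argument: otherwise there exist $v_j$ with $\|v_j\|_{W^{k+1,q}}=1$ and $\|\div v_j\|_{W^{k,q}}+\|\curl v_j\|_{W^{k,q}}\to0$; a subsequence converges in $L^q$ by Rellich, hence, by the proven inequality applied to differences, in $W^{k+1,q}$ to some $v$ with $\|v\|_{W^{k+1,q}}=1$, $\div v=\curl v=0$ and $v\times n=0$ — that is, $0\neq v\in\mathcal H=\{0\}$, a contradiction. This completes the proof.
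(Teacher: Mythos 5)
Your proposal is essentially correct, but note that the paper does not prove this lemma at all: Lemma \ref{crle2} (like Lemma \ref{crle1}) is simply quoted from the references \cite{vww,CANEHS}, so there is no in-paper argument to compare against. What you have written is, in effect, a reconstruction of the standard proof from that literature: the $L^2$ Gaffney-type identity with the mean-curvature boundary term for fields satisfying $v\times n=0$; localization, flattening in boundary normal coordinates, and the odd/even reflection that works precisely because the tangential components vanish on the boundary, combined with the Riesz-transform representation $\partial_j w_k=-R_jR_k\div w+\sum\pm R_jR_l(\curl w)_m$ from $-\Delta w=-\nabla\div w+\curl\curl w$; tangential difference quotients plus the algebraic recovery of normal derivatives ($\partial_3v_3$ from $\div v$, $\partial_3v_1,\partial_3v_2$ from two components of $\curl v$) for the induction on $k$; and a Rellich compactness--contradiction argument to drop $\|v\|_{L^q}$ when the kernel of harmonic fields is trivial. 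All of these steps are sound, and you correctly identify where the real work lies (verifying that $v\times n=0$ is complementing for the $(\div,\curl)$ system after flattening a curved boundary, i.e.\ controlling the variable-coefficient errors).

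Two ingredients deserve to be flagged as nontrivial inputs rather than routine steps. First, the identification $\dim\mathcal H=b_2(\Omega)$ of the space of harmonic fields with vanishing tangential trace with the second Betti number is exactly the Hodge-theoretic content of \cite{vww,CANEHS}; your compactness argument only needs $\mathcal H=\{0\}$ when $\Omega$ has no holes, but that triviality is itself the cited theorem, so the final step of your proof is not more elementary than the citation it replaces. Second, in the base case the boundary integral $\int_{\partial\Omega}\bigl((v\cdot\na)v-(\div v)v\bigr)\cdot n\,dS$ reduces to a curvature term only after a careful splitting of $\div v$ into its surface divergence, the term $H(v\cdot n)$, and $\partial_n(v\cdot n)$, with the normal-derivative contributions cancelling; the sign and exact form of the curvature term are immaterial since you only need the bound by $C\int_{\partial\Omega}|v|^2\,dS$, but a full write-up should carry out that cancellation explicitly. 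With those two points acknowledged, your argument is a correct and self-contained route to the estimate that the paper takes on faith from the references.
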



\begin{lemma}[\cite{CCL1}]\la{le9}
Let $\Omega$ be a simply connected bounded domain in $\r^3$ with smooth boundary. For $3<q<\infty$, assume that $u\cdot n=0$ and $\curl u\times n=0$ on $\partial\Omega$, $ u\in W^{2,q}$. Then there is a constant  $C=C(q)$ such that  the following estimate holds
\bnn\ba
\|\na u\|_{L^\infty}\le C\left(\|{\rm div}u\|_{L^\infty}+\|\curl u\|_{L^\infty} \right)\ln(e+\|\na^2u\|_{L^q})+C\|\na u\|_{L^2} +C .
\ea\enn
\end{lemma}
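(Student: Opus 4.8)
The plan is to deduce the $W^{1,\infty}$ bound for $u$ from two logarithmic elliptic estimates, and to obtain each of these by combining the $L^q$ Calder\'on--Zygmund theory already recorded in Lemmas~\ref{crle1}--\ref{crle2} with a dyadic (Littlewood--Paley) decomposition that repairs the failure of those estimates at the $L^\infty$ endpoint.

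First I would Hodge-decompose $u=\na\phi+w$, where $\phi$ solves the Neumann problem $\lap\phi=\div u$ in $\Omega$ with $\pa_n\phi=0$ on $\partial\Omega$, normalized by $\bar\phi=0$; this is solvable since $\int_\Omega\div u\,dx=\int_{\partial\Omega}u\cdot n\,dS=0$ by \eqref{ch1}. Then $w=u-\na\phi$ satisfies $\div w=0$, $\curl w=\curl u$, $w\cdot n=0$, and, since $\curl\na\phi=0$, also $\curl w\times n=\curl u\times n=0$ on $\partial\Omega$; moreover the identity $-\lap v=\curl\curl v-\na\div v$ gives $-\lap(\na\phi)=-\na\div u$ and $-\lap w=\curl(\curl u)$. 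Thus $\na\phi$ and $w$ each solve a constant-coefficient elliptic system whose right-hand side is the gradient, respectively the curl, of a first-order quantity ($\div u$ or $\curl u$) whose $L^\infty$-norm and $L^q$-norm of the derivative are exactly the ingredients on the right of the claimed inequality, and whose boundary conditions are of the type handled by Lemmas~\ref{crle1}--\ref{crle2}. Since $\|\na u\|_{L^\infty}\le\|\na^2\phi\|_{L^\infty}+\|\na w\|_{L^\infty}$, it remains to control these two terms.

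Consider $\na w$ (the argument for $\na^2\phi$ is identical with $\curl u$ replaced by $\div u$, using the analogous zero-order operator $f\mapsto\na(-\lap)^{-1}\na f$). Away from $\partial\Omega$ I would localize by a fixed cutoff $\chi\in C_c^\infty(\Omega)$ and extend $\chi w$ by zero to $\r^3$, where it satisfies $-\lap(\chi w)=\chi\,\curl(\curl u)+E$ with $E$ of order at most one in $w$; hence $\na(\chi w)=R(\curl u)+$ (an operator of negative order applied to $E$) for a zero-order singular integral $R$, the last term being $\le C\|\na u\|_{L^2}$ after interpolation. Splitting $\curl u=\sum_j\lap_j(\curl u)$ into dyadic blocks, estimating the low band $j\le J$ by $\|\lap_j R(\cdot)\|_{L^\infty}\le C\|\curl u\|_{L^\infty}$ (plus $C\|\na u\|_{L^2}$ from the lowest block) and the high band $j>J$ by Bernstein's inequality, $\|\lap_j R(\curl u)\|_{L^\infty}\le C2^{3j/q}2^{-j}\|\lap_j\na\curl u\|_{L^q}$ (which converges when summed over $j>J$ precisely because $q>3$), and then choosing $2^J\sim(e+\|\na^2 u\|_{L^q})^{1/(1-3/q)}$, one obtains $\|\na(\chi w)\|_{L^\infty}\le C\|\curl u\|_{L^\infty}\ln(e+\|\na^2 u\|_{L^q})+C\|\na u\|_{L^2}+C$. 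Near $\partial\Omega$ I would localize, flatten the boundary, and use the fact that $u\cdot n=0$ together with $\curl u\times n=0$ are precisely the conditions preserved under the reflection of $w$ across $\{x_3=0\}$ that is even in the tangential components and odd in the normal one (the same structural reason underlying Lemmas~\ref{crle1}--\ref{crle2}); this reduces the analysis to the half-space model, where the same near/far-frequency split applies. A partition of unity glues the local pieces, and the commutators produced by the cutoffs and the flattening, together with leftover lower-order terms such as $\|\na u\|_{L^q}$ and $\|w\|_{L^2}$, are absorbed into $C\|\na u\|_{L^2}+C$ via Gagliardo--Nirenberg (Lemma~\ref{l1}) and Young's inequality. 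Adding the resulting bounds for $\na^2\phi$ and $\na w$ gives the lemma.

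The hard part will be the boundary analysis inside the logarithmic interpolation: in the interior this is the classical logarithmic-interpolation (Beale--Kato--Majda type) splitting, but near $\partial\Omega$ one must first set up the half-space version of the div--curl system with the slip conditions, prove there the clean $L^q$-to-$L^\infty$ frequency estimate, and then carefully track every commutator error coming from the partition of unity and the change of variables flattening $\partial\Omega$ — and it is exactly these steps that use the smoothness, simple connectedness, and finitely-many-boundary-component hypotheses on $\Omega$, entering through Lemmas~\ref{crle1}--\ref{crle2}.
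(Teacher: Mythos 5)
First, a point of reference: the paper contains no proof of Lemma \ref{le9} at all --- it is imported verbatim from \cite{CCL1} --- so there is no in-paper argument to compare yours against, and your proposal has to stand on its own.

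Your interior argument is the classical Beale--Kato--Majda logarithmic interpolation (Hodge splitting $u=\na\phi+w$, Calder\'on--Zygmund representation of $\na w$ and $\na^2\phi$, dyadic low/high split with $2^{J(1-3/q)}\sim e+\|\na^2u\|_{L^q}$), and that half is sound; the half-space reflection structure you invoke is also correct (with $\O$ locally $\{x_3>0\}$, the conditions $w\cdot n=0$ and $\curl w\times n=0$ reduce to $w^3=0$, $\pa_3w^1=\pa_3w^2=0$, which are exactly compatible with the tangentially-even / normally-odd extension). The genuine gap is in the curved-boundary step, and it is not merely that you deferred ``the hard part'': the plan as stated does not close. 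After flattening $\pa\O$ by a diffeomorphism, $-\lap$ becomes $-a^{ij}\pa_i\pa_j+(\text{first order})$ with $a^{ij}=\delta^{ij}+O(|x|)$, and the reflection leaves behind error terms of the form $(a^{ij}-\delta^{ij})\pa_i\pa_j w$. These are \emph{second order} in $w$, not ``lower-order terms such as $\|\na u\|_{L^q}$ and $\|w\|_{L^2}$'' as you claim. Feeding them back through $\na(-\lap)^{-1}$ costs $C\ve\|\na^2u\|_{L^q}$ in $L^\infty$ (with $\ve$ the size of the coefficient perturbation on the localization patch), and since $\|\na^2u\|_{L^q}$ enters the target inequality only inside a logarithm, a term $C\ve\|\na^2u\|_{L^q}$ can never be absorbed by Gagliardo--Nirenberg and Young, no matter how small the patch. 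This is precisely why the $L^\infty$ endpoint in a bounded domain is delicate. The two standard ways to close it are: (i) prove instead the intermediate bound $\|\na u\|_{BMO(\O)}\le C(\|\div u\|_{L^\infty}+\|\curl u\|_{L^\infty}+\|\na u\|_{L^2})$ --- at the $BMO$ endpoint the variable-coefficient perturbation \emph{is} harmless --- and combine it with the bounded-domain logarithmic Sobolev embedding $\|f\|_{L^\infty}\le C\bigl(1+\|f\|_{BMO}\ln(e+\|\na f\|_{L^q})\bigr)$, which is the route followed in the cited source; or (ii) avoid flattening altogether and run the three-region splitting directly on the Green's function $N(x,y)$ of $\O$ for the relevant boundary value problem, using only the kernel bounds $|\na_xN(x,y)|\le C|x-y|^{-2}$ and $|\na_x\na_yN(x,y)|\le C|x-y|^{-3}$. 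Either repair replaces, rather than completes, the reflection-plus-commutator step of your outline.
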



\section{\la{se3} A priori estimates (I): lower order estimates}

From now on, we always assume that $\Omega$ is a simply connected bounded domain in $\r^3$ and its $C^{\infty}$ boundary $\partial\Omega$ only has a finite number of 2-dimensional connected components. Suppose $T>0$ be a fixed time and $(\rho,u)$ be a smooth solution to (\ref{a1})-(\ref{ch1})  on
$\Omega \times (0,T]$  with smooth initial
data $(\n_0,u_0)$ satisfying \eqref{dt1} and \eqref{dt2}. We will establish some necessary a priori bounds for smooth solutions to the problem (\ref{a1})-(\ref{ch1}) to extend the local  classical solutions guaranteed by
Lemma \ref{loc1}.

Since $u\cdot n=0$ on $\partial\Omega$, it is easy to check that
\be\la{bdd1}u\cdot\nabla u\cdot n=-u\cdot\nabla n\cdot u \,\,\,\text{on}\,\,\,\partial\Omega.\ee
The domain of definition of the function $n$ can be extended to the interior of $\Omega$, such as $n\in C^3(\bar{\Omega})$. In the following discussion, we still use $n$ to denote the extended function.

The following lemma depends on $u\cdot n=0$ on $\partial\Omega$.
\begin{lemma}[\cite{CCL1}]\la{uup1}If $(\n,u)$ is a smooth solution of
   (\ref{a1}) with slip condition \eqref{ch1}, then for any $p\in[2,6]$, there exists a positive constant $C$ depending only on $p$ and $\Omega$ such that
\be\la{tb90}
\ba\|\dot{u}\|_{L^6}\le C(\|\nabla\dot{u}\|_{L^2}+\|\nabla u\|_{L^2}^2),
\ea\ee
\be\la{tb11}\ba
\|\nabla\dot{u}\|_{L^2}\le C(\|\div \dot{u}\|_{L^2}+\|\curl \dot{u}\|_{L^2}+\|\nabla u\|_{L^4}^2).
\ea\ee
\end{lemma}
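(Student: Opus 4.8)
\textbf{Proof proposal for Lemma \ref{uup1}.}

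The plan is to prove both estimates by reducing to the div--curl inequalities of Lemma \ref{crle1} and Lemma \ref{crle2}, after carefully tracking the boundary behavior of $\dot u$. The subtlety is that, unlike $u$, the material derivative $\dot u = u_t + u\cdot\na u$ does \emph{not} satisfy $\dot u\cdot n = 0$ on $\partial\Omega$; differentiating $u\cdot n=0$ along the flow only gives $\dot u\cdot n = u\cdot\na n\cdot u$ via \eqref{bdd1} (here $n$ denotes the fixed $C^3(\bar\Omega)$ extension). So $\dot u$ is not tangential, and the clean version of the Sobolev/div--curl inequalities with vanishing boundary constant is unavailable. The remedy is to split $\dot u = w + \nabla\psi$ or, more simply, to write $\dot u = \dot u - (\,(u\cdot\na n\cdot u)/|n|^2\,) n =: v$ on the boundary --- i.e. subtract off a smooth vector field $h$ with $h\cdot n = u\cdot\na n\cdot u$ on $\partial\Omega$ and $\|h\|_{W^{1,p}}\le C\|u\|_{L^{2p}}^2\le C\|\na u\|_{L^2}^2$ (the last step by \eqref{g1} since $u\cdot n|_{\partial\Omega}=0$ kills the lower-order term, for $p\in[2,6]$ so that $2p\in[4,12]$ --- actually one wants $2p\le 6$, so for $p\le 3$ directly, and for $p\in(3,6]$ interpolate, or just take $h$ to be the harmonic extension of $(u\cdot\na n\cdot u)n$ and control it in $L^6$). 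Then $v = \dot u - h$ satisfies $v\cdot n = 0$ on $\partial\Omega$.

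For \eqref{tb90}: apply \eqref{g1} to $\dot u \in H^1$, giving $\|\dot u\|_{L^6}\le C\|\dot u\|_{L^2}^{1/2}\|\na\dot u\|_{L^2}^{1/2} + C\|\dot u\|_{L^2}$, but this still has $\|\dot u\|_{L^2}$ on the right, which is not among the allowed terms. Instead apply \eqref{g1} to $v = \dot u - h$, which is tangential, so the constant $C_2=0$ and $\|v\|_{L^6}\le C\|\na v\|_{L^2}\le C(\|\na\dot u\|_{L^2} + \|\na h\|_{L^2})\le C(\|\na\dot u\|_{L^2} + \|\na u\|_{L^2}^2)$; then $\|\dot u\|_{L^6}\le \|v\|_{L^6} + \|h\|_{L^6}\le C(\|\na\dot u\|_{L^2}+\|\na u\|_{L^2}^2)$, using $\|h\|_{L^6}\le C\|h\|_{W^{1,2}}\le C\|\na u\|_{L^2}^2$. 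This is \eqref{tb90}. (One must double-check that the extended $n$ and the construction of $h$ introduce only constants depending on $\Omega$; they do, since $n\in C^3(\bar\Omega)$.)

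For \eqref{tb11}: apply Lemma \ref{crle2} to $v = \dot u - h$ with $k=0$, $q=2$ --- but Lemma \ref{crle2} requires $v\times n = 0$, not $v\cdot n = 0$, so that is the wrong one; apply Lemma \ref{crle1} instead, which only needs $v\cdot n=0$ on $\partial\Omega$ and the first Betti number vanishing (true by hypothesis on $\Omega$). This gives $\|\na v\|_{L^2}\le C(\|\div v\|_{L^2} + \|\curl v\|_{L^2})$. Now $\div v = \div\dot u - \div h$ and $\curl v = \curl\dot u - \curl h$, with $\|\na h\|_{L^2}\le C\|\na u\|_{L^2}^2\le C\|\na u\|_{L^4}^2$ absorbed into the stated right-hand side. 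Hence $\|\na\dot u\|_{L^2}\le \|\na v\|_{L^2} + \|\na h\|_{L^2}\le C(\|\div\dot u\|_{L^2} + \|\curl\dot u\|_{L^2} + \|\na u\|_{L^4}^2)$, which is \eqref{tb11}.

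The main obstacle, and the only place real care is needed, is the construction and estimation of the corrector $h$: one needs a bounded linear extension/lifting operator producing $h$ from the boundary datum $(u\cdot\na n\cdot u)n$ with $\|h\|_{W^{1,p}}$ controlled by a boundary norm of $u\cdot\na n\cdot u$, and then that boundary norm must be bounded by $\|\na u\|_{L^2}^2$ (for \eqref{tb90}) or $\|\na u\|_{L^4}^2$ (for \eqref{tb11}) using the trace theorem together with \eqref{g1}; the quadratic dependence on $u$ and the fact that $u$ is tangential (so that \eqref{g1} loses its additive constant) are what make the bookkeeping work out. Everything else is a direct application of Lemma \ref{l1} and Lemma \ref{crle1}.
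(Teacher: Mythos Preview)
The paper does not prove this lemma; it is quoted verbatim from \cite{CCL1}. So there is no in-paper proof to compare against, and the question becomes whether your argument stands on its own.

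Your strategy---observe that $\dot u\cdot n=-u\cdot\nabla n\cdot u$ on $\partial\Omega$ via \eqref{bdd1}, subtract a corrector $h$ with $h\cdot n=\dot u\cdot n$ so that $v=\dot u-h$ is tangential, then apply Lemma~\ref{crle1} and Lemma~\ref{l1} with $C_2=0$---is the right one, and for \eqref{tb11} it goes through cleanly: with the explicit choice $h=-(u\cdot\nabla n\cdot u)\,n$ one has $|\nabla h|\le C(|u||\nabla u|+|u|^2)$, hence $\|\nabla h\|_{L^2}\le C\|u\|_{L^4}\|\nabla u\|_{L^4}+C\|u\|_{L^4}^2\le C\|\nabla u\|_{L^4}^2$ on a bounded domain, and the rest follows as you wrote.

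For \eqref{tb90} there is a real gap. You assert $\|\nabla h\|_{L^2}\le C\|\nabla u\|_{L^2}^2$, but with the explicit corrector this fails: the term $\||u||\nabla u|\|_{L^2}$ is at best $\|u\|_{L^6}\|\nabla u\|_{L^3}$ or $\|u\|_{L^4}\|\nabla u\|_{L^4}$, and neither is controlled by $\|\nabla u\|_{L^2}^2$ alone in three dimensions. The same obstruction blocks your fallback $\|h\|_{L^6}\le C\|h\|_{W^{1,2}}$, and the harmonic-extension variant runs into the fact that $H^{1/2}(\partial\Omega)$ is not an algebra on a $2$-manifold, so the boundary datum $u\cdot\nabla n\cdot u$ need not lie in $H^{1/2}(\partial\Omega)$ with quadratic control.

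The repair is to avoid $\|\nabla h\|_{L^2}$ altogether. A compactness argument gives the Poincar\'e-type inequality
\[
\|f\|_{L^6(\Omega)}\le C\big(\|\nabla f\|_{L^2(\Omega)}+\|f\cdot n\|_{L^2(\partial\Omega)}\big)
\]
for vector fields $f\in H^1$ (the only constant with $f\cdot n=0$ on $\partial\Omega$ is zero). Applied to $\dot u$, this yields $\|\dot u\|_{L^6}\le C\|\nabla\dot u\|_{L^2}+C\|u\cdot\nabla n\cdot u\|_{L^2(\partial\Omega)}$, and the trace estimate $\|u\|_{L^4(\partial\Omega)}\le C\|u\|_{H^{1/2}(\partial\Omega)}\le C\|u\|_{H^1(\Omega)}\le C\|\nabla u\|_{L^2}$ then gives $\|u\cdot\nabla n\cdot u\|_{L^2(\partial\Omega)}\le C\|u\|_{L^4(\partial\Omega)}^2\le C\|\nabla u\|_{L^2}^2$, which is \eqref{tb90}. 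Your corrector picture is morally equivalent, but the norm you tried to control is one derivative too strong.
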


In the following, $C$ denotes a generic positive constant depending on $\mu ,  \lambda ,   \ga ,  a ,  \hat{\rho}$ and $\Omega$ and use $C(\alpha)$ to emphasize that $C$ depends on $\alpha.$ The following lemma gives a standard energy estimate for $(\rho,u)$.
\begin{lemma}\la{le2}
 Let $(\n,u)$ be a smooth solution of
 \eqref{a1}--\eqref{ch1} on $\O \times (0,T]. $
  Then there is a positive constant
  $C $ depending only  on $\mu$ and $\Omega$  such that
\be \la{a16} \sup_{0\le t\le T}
\int\left( \frac{1}{2}\rho |u|^2+G(\rho)\right)dx + \int_0^{T}\int((\lambda(\rho)+2\mu)(\div u)^2+\mu|\curl u|^2)dxdt\le C_0,\ee
\be \la{bcgs1}\int_{0}^{T}\|\nabla u\|_{L^{2}}^{2}dt\leq CC_{0}.\ee
\end{lemma}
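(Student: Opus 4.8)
The plan is the classical basic energy identity, carried out through the reformulation $(\ref{hod1})$ of the momentum equation so that the Navier-slip conditions $(\ref{ch1})$ annihilate every boundary term. First I recall that $(\ref{a1})_1$ conserves total mass, so $\bar\n(t)\equiv\bar\n_0$ for all $t$, and that the potential energy density $G$ obeys
\[
\frac{d}{dt}\int G(\n)\,dx=\int G'(\n)\n_t\,dx=-\int G'(\n)\,\div(\n u)\,dx=-\int\big(\n G'(\n)-G(\n)\big)\div u\,dx=-\int(P-\bar P)\div u\,dx ,
\]
where I have used $(\ref{a1})_1$, an integration by parts with $u\cdot n=0$ on $\pa\O$, the defining relation $\n G'(\n)-G(\n)=P(\n)$ (up to a spatial constant), and $\int\bar P\,\div u\,dx=\bar P\int_{\pa\O}u\cdot n\,dS=0$.

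Next I multiply $(\ref{hod1})$, namely $\n\dot u=\na F-\mu\na\times\curl u$ with $F=(\lambda(\n)+2\mu)\div u-(P-\bar P)$, by $u$ and integrate over $\O$. By mass conservation the left-hand side is $\frac{d}{dt}\int\tfrac12\n|u|^2\,dx$. For the flux term, integration by parts together with $u\cdot n=0$ gives
\[
\int\na F\cdot u\,dx=-\int F\,\div u\,dx=-\int(\lambda(\n)+2\mu)(\div u)^2\,dx+\int(P-\bar P)\div u\,dx ,
\]
while for the rotational term the identity $\int(\na\times\curl u)\cdot u\,dx=\int|\curl u|^2\,dx+\int_{\pa\O}(\curl u\times n)\cdot u\,dS$ together with $\curl u\times n=0$ on $\pa\O$ gives $-\mu\int(\na\times\curl u)\cdot u\,dx=-\mu\int|\curl u|^2\,dx$. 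Adding these and inserting the identity for $\frac{d}{dt}\int G(\n)\,dx$, the two pressure integrals cancel and one is left with the exact energy balance
\[
\frac{d}{dt}\int\Big(\tfrac12\n|u|^2+G(\n)\Big)\,dx+\int\big((\lambda(\n)+2\mu)(\div u)^2+\mu|\curl u|^2\big)\,dx=0 .
\]
Integrating in $t$ over $[0,T]$ gives $(\ref{a16})$ (in fact with equality to $C_0$).

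For $(\ref{bcgs1})$ I invoke Lemma \ref{crle1}: since $\O$ is simply connected and $u\cdot n=0$ on $\pa\O$, $\|\na u\|_{L^2}^2\le C(\|\div u\|_{L^2}^2+\|\curl u\|_{L^2}^2)$; and since $\lambda(\n)=b\n^\beta\ge0$ while $\mu>0$ is constant, the integrand in $(\ref{a16})$ dominates $2\mu(\div u)^2+\mu|\curl u|^2$ pointwise, so integrating in time yields $\int_0^T\|\na u\|_{L^2}^2\,dt\le CC_0$. The only points needing care are the two boundary integrals, which vanish precisely because of $u\cdot n=0$ and $\curl u\times n=0$; everything else is routine. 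I anticipate no real obstacle here — this is the standard basic energy estimate, and its sole purpose is to seed the higher-order a priori bounds established in the sections that follow.
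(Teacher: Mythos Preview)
Your proof is correct and follows essentially the same approach as the paper: rewrite the momentum equation via $-\Delta u=-\na\div u+\na\times\curl u$, multiply by $u$, use $u\cdot n=0$ and $\curl u\times n=0$ to kill the boundary terms, combine with the transport identity for $G(\n)$ to obtain the exact energy balance, and then invoke Lemma~\ref{crle1} for $(\ref{bcgs1})$. The only cosmetic difference is that the paper keeps $\na P$ separate rather than absorbing it into $F$, and writes $P-P(\bar\n)$ instead of $P-\bar P$ in the $G$-identity; since any spatial constant times $\div u$ integrates to zero, the two versions coincide.
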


\begin{proof}
$(\ref{a1})_2 $ can be rewrite as
\be\la{m1} \ba
\rho \dot{u} -\nabla((\lambda(\rho) + 2\mu)\div u)+\mu\nabla\times\curl u + \nabla P  =0,
\ea \ee
since $$-\Delta u=-\nabla\div u+\nabla\times\curl u.$$

Multiply \eqref{m1} by $u$ and integrating over $\Omega$ show


\be\la{mh4} \ba
\frac{1}{2}\left(\int\rho |u|^{2}dx\right)_t  + \int(\lambda(\rho) + 2\mu)(\div u)^{2}dx + \mu\int|\curl u|^{2}dx=\int P\div u dx.
\ea \ee
It is easy to check that
\be\la{Pu1} \ba
 P_t+\div(Pu)+(\gamma-1)P\div u=0,
 \ea \ee
 or
 \be\la{Pu2} \ba
 P_t+\nabla P\cdot u+\gamma P\div u=0,
 \ea \ee
 due to $(\ref{a1})_1$.

After a simple calculation, one has
$$ (G(\rho))_t + \div(G(\rho)u)+(P-P(\bar{\rho})) \div u=0.$$
Integrating over $\Omega$ and using the boundary condition \eqref{ch1}, we get
\be\la{m0} \ba
\left(\int G(\rho)dx\right)_t+\int(P-P(\bar{\rho})) \div udx=0.
\ea \ee
\eqref{m0} combined with \eqref{mh4} gives
\be\la{buchong} \ba
\left(\int\frac{1}{2}\rho |u|^{2}+G(\rho)dx\right)_t  + \int(\lambda(\rho) + 2\mu)(\div u)^{2}dx + \mu\int|\curl u|^{2}dx=0.
\ea \ee
Together with Lemma \ref{crle1}, we give \eqref{bcgs1}.

\end{proof}

Set $\si=\si(t)\triangleq\min\{1,t \},$  define
 \be\la{As1}
  A_1(T) \triangleq \sup_{   0\le t\le T  }\left(\sigma\|\nabla u\|_{L^2}^2\right) + \int_0^{T}\int\sigma
 \n|\dot{u} |^2 dxdt,
  \ee
\be \la{As2}
  A_2(T)  \triangleq\sup_{  0\le t\le T   }\sigma^3\int\n|\dot{u}|^2dx + \int_0^{T}\int
  \sigma^3|\nabla\dot{u}|^2dxdt,
\ee
and
\be \la{As3}
  A_3(T)  \triangleq\sup_{  0\le t\le T   }\|\nabla u\|_{L^2}^2.
\ee

\begin{proposition}\la{pr1}  Under  the conditions of Theorem \ref{th1}, for given number $M>0$ (not necessarily small),
   there exist  positive constants  $\varepsilon$ and $K$ both depending on  $\mu$, $\lambda$, $a$, $\ga$, $\hat{\rho}$, $\Omega$, and $M$  such that if
       $(\rho,u)$  is a smooth solution of
       \eqref{a1}--\eqref{ch1}  on $\Omega\times (0,T] $
        satisfying
 \be\la{zz1}
 \sup\limits_{
 \Omega\times [0,T]}\rho\le 2\hat{\rho},\quad
     A_1(T) + A_2(T) \le 2C_0^{1/3},\quad A_3(\sigma(T))\leq 3K,
  \ee
 then the following estimates hold
        \be\la{zz2}
 \sup\limits_{\Omega\times [0,T]}\rho\le 7\hat{\rho}/4, \quad
     A_1(T) + A_2(T) \le  C_0^{1/3},\quad A_3(\sigma(T))\leq 2K,
  \ee
   provided $C_0\le \ve.$
\end{proposition}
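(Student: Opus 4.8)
The plan is to close the bootstrap argument of Proposition~\ref{pr1} by proving a hierarchy of a priori estimates for $A_1(T)$, $A_2(T)$, $A_3(\sigma(T))$ and the density bound, each improving on the assumed bound in \eqref{zz1} by a factor, provided $C_0$ is small enough. First I would establish the basic energy estimate (Lemma~\ref{le2}) and then derive the key estimate for $A_1(T)$: multiply the momentum equation, written in the form \eqref{hod1}, by $\sigma\dot u$ and integrate over $\Omega\times(0,t)$. The left side produces $\sigma\|\nabla u\|_{L^2}^2$ (up to lower-order terms coming from $\sigma'$, which are controlled by the energy bound \eqref{bcgs1}) and $\int\sigma\rho|\dot u|^2$. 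The right side generates pressure terms $\int\sigma P\,\mathrm{div}\,\dot u$ and, crucially, boundary integrals because we integrate by parts with only $u\cdot n=0$ available; here the identity \eqref{pzw1}, i.e.\ $u\cdot\nabla u\cdot n=-u\cdot\nabla n\cdot u$, together with $n\in C^3(\bar\Omega)$, is what converts the dangerous boundary term into something bounded by $\|\nabla u\|_{L^2}^2$ and $\|u\|_{L^2}^2$. Using Lemma~\ref{crle1} to replace $\|\nabla u\|$ by $\|\mathrm{div}\,u\|+\|\mathrm{curl}\,u\|$ and Lemma~\ref{uup1} to handle $\|\dot u\|_{L^6}$, one absorbs the $\beta\in[1,\gamma+1]$-dependent terms $\lambda(\rho)=b\rho^\beta$ using the density bound $\rho\le 2\hat\rho$. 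This should yield $A_1(T)\le C(C_0+C_0^{?})$ with a good power, hence $A_1(T)\le C_0^{1/3}/2$ for $C_0$ small.

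Next I would derive the estimate for $A_2(T)$: apply $\dot{(\cdot)}=\partial_t+u\cdot\nabla$ to the momentum equation \eqref{hod1} (equivalently operate on \eqref{m1}), multiply by $\sigma^3\dot u$, and integrate. This is the standard ``estimate for the material derivative'' step; the commutator terms $[\dot{\ },\partial_i]$ produce cubic expressions in $\nabla u$ together with $\nabla\dot u$, which one bounds by Gagliardo--Nirenberg (Lemma~\ref{l1}) and Lemma~\ref{uup1}; the $\mathrm{div}\,u$ and $\mathrm{curl}\,u$ of $\dot u$ are controlled via \eqref{tb11}. Again boundary terms appear and are handled via \eqref{bdd1}/\eqref{pzw1} and the extension of $n$ into $\Omega$. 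One gets $A_2(T)\le C(C_0^{1/3}\cdot(\text{small})+\dots)$, improving $2C_0^{1/3}$ to $C_0^{1/3}$. Then the density bound: rewrite $(\ref{a1})_1$ along particle trajectories; using $F=(\lambda(\rho)+2\mu)\mathrm{div}\,u-(P-\bar P)$, one gets an ODE of the form $\frac{d}{dt}\rho + \rho^{?}(\text{something}) = \rho\cdot(\text{bounded increments})$ to which Zlotnik's Lemma~\ref{le1} applies; the increments $\int_0^t F\,ds$ (equivalently $\int_0^t \|F\|_{L^\infty}$ or a time-integrated $L^p$ bound via Lemma~\ref{l1}) are controlled by $A_1,A_2$ and the energy, small multiples of $C_0$; Zlotnik then gives $\rho\le 7\hat\rho/4$ provided $C_0\le\varepsilon$. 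Finally $A_3(\sigma(T))=\sup_{0\le t\le\sigma(T)}\|\nabla u\|_{L^2}^2$ is closed by a similar multiplication by $\dot u$ (now without the $\sigma$ weight, since $\sigma(T)\le 1$), using the initial bound $\|\nabla u_0\|_{L^2}^2\le M$ and the compatibility condition \eqref{dt3} to control $\sqrt\rho\dot u$ at $t=0$; this yields $A_3(\sigma(T))\le K+C(M)C_0^{?}\le 2K$ for the right choice of $K=K(M)$ and small $C_0$.

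I expect the main obstacle to be the boundary integral terms arising in the $A_1$ and especially the $A_2$ estimate. In the whole-space or flat-boundary setting these terms vanish or are trivially handled, but here, after integrating by parts against $\nabla F$ and $\nabla\times\mathrm{curl}\,u$ with only $u\cdot n=0$ and $\mathrm{curl}\,u\times n=0$ on $\partial\Omega$, one is left with surface integrals of the schematic form $\int_{\partial\Omega} (u\cdot\nabla u\cdot n)(\cdots)$ and $\int_{\partial\Omega}(\nabla u : \text{stuff})$ that involve the curvature of $\partial\Omega$. The delicate point is to rewrite each such term, using \eqref{pzw1} and the boundary conditions, so that it is bounded by $C\|\nabla u\|_{L^2}^2$ (a quantity we already control) plus terms absorbable into the dissipation $\int\sigma\rho|\dot u|^2$ or $\int\sigma^3|\nabla\dot u|^2$ with a small constant, without losing derivatives; trace inequalities and the $H^1\hookrightarrow L^4(\partial\Omega)$ embedding, combined with the Bogovskii-type estimates of Lemmas~\ref{crle1}--\ref{crle2}, are the tools for this. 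A secondary difficulty is keeping the $\beta\le\gamma+1$ dependence harmless in all the pressure and $\lambda(\rho)$-weighted terms, which forces careful use of the uniform density bound everywhere it appears.
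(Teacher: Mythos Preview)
Your proposal is essentially correct and follows the same architecture as the paper: the paper proves Proposition~\ref{pr1} as a direct consequence of a chain of lemmas matching your outline (energy estimate; preliminary $A_1,A_2$ bounds via testing with $\sigma^m\dot u$ and the material-derivative-of-momentum identity; closing $A_3(\sigma(T))$ by taking $m=0$; closing $A_1+A_2$; then the density bound via Zlotnik), and you have correctly identified the boundary-term manipulation based on \eqref{pzw1} as the main technical issue, including the trick of writing the $A_2$ boundary term as a time derivative plus controllable remainders.

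One ingredient you omit, which the paper inserts between the energy estimate and the $A_1$ estimate and which is needed to close the bootstrap with $T$-independent constants, is a separate bound $\int_0^T\|P-\bar P\|_{L^2}^2\,dt\le CC_0^{1/2}$ and $\sup_t\sigma\|P-\bar P\|_{L^2}^2\le CC_0^{1/2}$ (Lemma~\ref{th00} in the paper). This is obtained by solving $\mathrm{div}\,\phi=P-\bar P$, $\phi|_{\partial\Omega}=0$ via the Bogovskii operator (Lemma~\ref{ll27}; your pointer to Lemmas~\ref{crle1}--\ref{crle2} as ``Bogovskii-type'' is a misidentification --- those are div--curl estimates) and testing the momentum equation against $\phi$. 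These pressure bounds feed into \eqref{h18}, \eqref{h20} and are what make the cubic and quartic remainders $\int\sigma|\nabla u|^3$, $\int\sigma^3|\nabla u|^4$, as well as the Zlotnik increment $\int\|F\|_{L^\infty}$, come out with a good power of $C_0$ uniformly in $T$; without them your bounds would carry a factor of $T$. Also, the compatibility condition \eqref{dt3} is not used for $A_3(\sigma(T))$; the paper simply sets $m=0$ in the $A_1$-type inequality and absorbs the cubic term by Young's inequality plus the bootstrap hypothesis.
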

\begin{proof}Proposition \ref{pr1} is a direct consequence of the following Lemmas \ref{nzc1}--\ref{le7}.
\end{proof}

Now, we give the following estimates on $F$, $\curl u$ and $\nabla u$.
\begin{lemma}   \la{le3}
 Assume $\Omega$ is a simply connected bounded domain in $\r^3$ and its $C^{\infty}$ boundary $\partial\Omega$ only has a finite number of 2-dimensional connected components. Let $(\rho,u)$ be a smooth solution of \eqref{a1} satisfying $\sup\limits_{
 \Omega\times [0,T]}\rho\le 2\hat{\rho}$ in $\Omega$ with slip condition \eqref{ch1}. Then for any $p\in[2,6],\,\,1<q<+\infty,$ there exists a positive constant $C$ depending only on $p$, $q$, $\mu$, $\lambda$, $\hat{\rho}$, and $\Omega$ such that
\be\la{tdu1}\ba
\|\nabla u\|_{L^p}\leq C(\|\div u\|_{L^p}+\|\curl u\|_{L^p}),
\ea\ee
\be\la{h19}\ba
\|\nabla F\|_{W^{k+1,q}}\leq C\|\rho\dot{u}\|_{W^{k+1,q}},\,\,k\geq0,
\ea\ee
\be\la{zh19}\ba
\|\nabla\curl u\|_{L^p}\leq C(\|\rho\dot{u}\|_{L^p}+\|\nabla u\|_{L^2}),
\ea\ee
\be\la{h20}\ba
\|F\|_{L^p}+\|\curl u\|_{L^p}&\leq C\|\rho\dot{u}\|_{L^2}^{(3p-6)/(2p)}(\|\nabla u\|_{L^2}+\|P-\bar{P}\|_{L^2})^{(6-p)/(2p)}\\
&\quad+C(\|\nabla u\|_{L^2}+\|P-\bar{P}\|_{L^2}).
\ea\ee
Moreover,
\be\la{hh20}\ba
\|F\|_{L^p}+\|\curl u\|_{L^p}\leq C(\|\rho\dot{u}\|_{L^2}+\|\nabla u\|_{L^2}+\|P-\bar{P}\|_{L^2}),
\ea\ee
\be\la{h18}\ba
\|\nabla u\|_{L^p}&\leq C\|\rho\dot{u}\|_{L^2}^{(3p-6)/(2p)}(\|\nabla u\|_{L^2}+\|P-\bar{P}\|_{L^2})^{(6-p)/(2p)}\\
&\quad+C(\|\nabla u\|_{L^2}+\|P-\bar{P}\|_{L^2}+\|P-\bar{P}\|_{L^p}).
\ea\ee
\end{lemma}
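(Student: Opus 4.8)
\textbf{Proof proposal for Lemma \ref{le3}.}

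The plan is to derive everything from the Hodge-type decomposition \eqref{hod1}, i.e. $\rho\dot u = \nabla F - \mu\nabla\times\curl u$, together with the Helmholtz--Weyl estimates of Lemmas \ref{crle1} and \ref{crle2} and the elementary $L^p$ theory of the div--curl system. Estimate \eqref{tdu1} is immediate: since $u\cdot n=0$ on $\partial\Omega$ and $\Omega$ is simply connected, Lemma \ref{crle1} with $k=0$ gives $\|\nabla u\|_{L^p}\le C(\|\div u\|_{L^p}+\|\curl u\|_{L^p})$ for any $1<p<\infty$, hence in particular for $p\in[2,6]$.

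Next I would treat $F$ and $\curl u$. Taking the divergence of \eqref{hod1} shows $\Delta F = \div(\rho\dot u)$, and since $\bar F = 0$ (because $\int F\,dx = \int(\lambda(\rho)+2\mu)\div u\,dx - \int(P-\bar P)\,dx$ and $\int\div u\,dx = \int u\cdot n\,dS = 0$ by the boundary condition, while $\int(P-\bar P)\,dx=0$ by definition of $\bar P$), standard elliptic $W^{k+1,q}$ estimates for the Neumann/pure-Laplace problem on the smooth domain $\Omega$ give \eqref{h19}: $\|\nabla F\|_{W^{k+1,q}}\le C\|\rho\dot u\|_{W^{k+1,q}}$ (one has to check the natural boundary condition $\partial_n F$ produced by \eqref{hod1} pairs correctly, using $\curl u\times n=0$; this is the kind of computation already available from \cite{CCL1}). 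For \eqref{zh19}, apply $\curl$ to \eqref{hod1}: since $\curl\nabla F = 0$ we get $\mu\nabla\times\curl\curl u = -\curl(\rho\dot u)$, so $\Delta\curl u = \nabla\div\curl u - \nabla\times\curl\curl u = \mu^{-1}\curl(\rho\dot u)$ (using $\div\curl u=0$); elliptic estimates plus Lemma \ref{crle2} applied to $\curl u$ (which satisfies $\curl u\times n=0$ on $\partial\Omega$) yield $\|\nabla\curl u\|_{L^p}\le C(\|\rho\dot u\|_{L^p}+\|\curl u\|_{L^2})$, and then Poincaré/Sobolev control of $\|\curl u\|_{L^2}$ by $\|\nabla u\|_{L^2}$ gives \eqref{zh19}. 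For \eqref{h20} I would use Gagliardo--Nirenberg \eqref{g1}: since $\bar F=0$, $\|F\|_{L^p}\le C\|\nabla F\|_{L^2}^{(3p-6)/(2p)}\|F\|_{L^2}^{(6-p)/(2p)}\le C\|\rho\dot u\|_{L^2}^{(3p-6)/(2p)}(\|\nabla u\|_{L^2}+\|P-\bar P\|_{L^2})^{(6-p)/(2p)}$, using \eqref{h19} with $k=-1$ (equivalently the $L^2$ estimate $\|\nabla F\|_{L^2}\le C\|\rho\dot u\|_{L^2}$) and $\|F\|_{L^2}\le C(\|\div u\|_{L^2}+\|P-\bar P\|_{L^2})\le C(\|\nabla u\|_{L^2}+\|P-\bar P\|_{L^2})$; the same argument applies to $\curl u$ via \eqref{zh19}, giving \eqref{h20}. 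The bound \eqref{hh20} is then just Young's inequality applied to \eqref{h20}.

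Finally, \eqref{h18} follows by combining the previous steps with \eqref{tdu1}: write $\div u = (\lambda(\rho)+2\mu)^{-1}(F + P - \bar P)$, so $\|\div u\|_{L^p}\le C(\|F\|_{L^p}+\|P-\bar P\|_{L^p})$ since $(\lambda(\rho)+2\mu)^{-1}$ is bounded (using $0\le\rho\le 2\hat\rho$ and $\mu>0$), and $\|\nabla u\|_{L^p}\le C(\|\div u\|_{L^p}+\|\curl u\|_{L^p})\le C(\|F\|_{L^p}+\|\curl u\|_{L^p}+\|P-\bar P\|_{L^p})$; inserting \eqref{h20} for $\|F\|_{L^p}+\|\curl u\|_{L^p}$ produces \eqref{h18}.

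The main obstacle I anticipate is the elliptic estimate \eqref{h19} with the correct boundary conditions: one must verify that equation \eqref{hod1} together with $\curl u\times n = 0$ on $\partial\Omega$ really does impose the Neumann-type condition on $F$ that makes the $W^{k+1,q}$ elliptic regularity theory applicable on a general (not flat) smooth boundary, and that the resulting constant depends only on $\mu,\lambda,\hat\rho,\Omega$. This is precisely where the geometry of $\partial\Omega$ enters, and where one leans on the div--curl machinery of \cite{vww} and the computations of \cite{CCL1}; the rest is routine interpolation and bootstrapping.
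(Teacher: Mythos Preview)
Your approach is correct and essentially identical to the paper's: the paper defers \eqref{tdu1}, \eqref{h19}, \eqref{zh19}, \eqref{hh20}, \eqref{h18} to \cite{CCL1} and only writes out \eqref{h20}, doing exactly what you do --- apply Gagliardo--Nirenberg \eqref{g1} to $F$ (and to $\curl u$), bound $\|\nabla F\|_{L^2}$ by $\|\rho\dot u\|_{L^2}$ via the elliptic estimate, and bound $\|F\|_{L^2}$ by $\|\nabla u\|_{L^2}+\|P-\bar P\|_{L^2}$ from the definition of $F$.

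One small correction: you invoke $\bar F=0$ to drop the lower-order term $C_2\|F\|_{L^2}$ in \eqref{g1}, but Lemma~\ref{l1} as stated gives $C_2=0$ only under the tangency condition $f\cdot n|_{\partial\Omega}=0$, not under the mean-zero condition; the paper simply keeps $+C\|F\|_{L^2}$, and that is precisely where the additive term $+C(\|\nabla u\|_{L^2}+\|P-\bar P\|_{L^2})$ in \eqref{h20} comes from. Your stronger bound (without that additive term) would require a separate Poincar\'e--Wirtinger argument for scalar mean-zero functions, which is fine but is not Lemma~\ref{l1}. Similarly, in your derivation of \eqref{zh19} from Lemma~\ref{crle2}, the lemma leaves $\|\curl u\|_{L^p}$ (not $\|\curl u\|_{L^2}$) on the right-hand side when $\Omega$ may have holes; passing to $\|\nabla u\|_{L^2}$ needs an interpolation/absorption step you should make explicit.
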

\begin{proof}
We only prove \eqref{h20}. For others, please refer to \cite{CCL1}.

One can deduce from \eqref{g1} and \eqref{h19} that for $p\in[2,6]$,
\be\la{x2610}\ba
\|F\|_{L^p}&\leq C\|F\|_{L^2}^{(6-p)/(2p)}\|\nabla F\|_{L^2}^{(3p-6)/(2p)}+C\|F\|_{L^2}\\
&\leq C\|\rho\dot{u}\|_{L^2}^{(3p-6)/(2p)}(\|\nabla u\|_{L^2}+\|P-\bar{P}\|_{L^2})^{(6-p)/(2p)}\\
&\quad+C(\|\nabla u\|_{L^2}+\|P-\bar{P}\|_{L^2}),
\ea\ee
which also implies that
\be\la{x2611}\ba
\|F\|_{L^p}\leq C(\|\rho\dot{u}\|_{L^2}+\|\nabla u\|_{L^2}+\|P-\bar{P}\|_{L^2}).
\ea\ee

Similarly, \eqref{x2610} and \eqref{x2611} are still true if we replace $F$ by $\curl u$, and so \eqref{h20} and \eqref{hh20} are established.

\end{proof}

Consider the problem
\bn\la{e480}\begin{cases}
{\rm div}\phi=g, \,\,\,\,  &x\in\Omega, \\
\phi=0,\,\,\,&x\in{\partial\Omega},
\end{cases} \en
where $\Omega$ is a bounded domain in $R^{3}$ with Lipschitz boundary.

This problem has been extensively discussed, and due to Theorem 3.3 in \cite{GPG}, one has the following conclusion.
\begin{lemma} \la{ll27}
Suppose $g\in L^{q}(\Omega)$ with $\int gdx=0,\,\,\,q>1$, then the problem \eqref{e480} has a unique solution $\phi\in W_0^{1,q}(\Omega)$, such that
$$\|\phi\|_{W_0^{1,q}(\Omega)}\leq C\|g\|_{L^{q}(\Omega)},$$
where $C$ is a positive constant independent of $g$.

Moreover, if $g={\rm div}h$ for some $h\in L^{r}(\Omega),\,\,\,r>1$ with $h\cdot n=0$ on $\partial\Omega$, then
$$\|\phi\|_{L^{r}(\Omega)}\leq C\|h\|_{L^{r}(\Omega)},$$
where $C$ is a positive constant independent of $h$.
\end{lemma}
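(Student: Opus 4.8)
The plan is to realize $\phi$ via Bogovskii's explicit construction, which produces a bounded \emph{linear} right inverse of the divergence; I would therefore read ``unique solution'' as the assertion that this operator is well defined — a genuine right inverse is of course not unique, since one may add any divergence-free field in $W_0^{1,q}$. The first step is geometric: since $\partial\Omega$ is Lipschitz, cover $\bar\Omega$ by finitely many open sets $\Omega_1,\dots,\Omega_m$, each star-shaped with respect to a ball whose closure lies in $\Omega_k$. Next I would decompose the datum as $g=\sum_{k}g_k$ with $\operatorname{supp}g_k\subset\Omega_k$ and, crucially, $\int_{\Omega_k}g_k\,dx=0$ for every $k$; starting from a subordinate partition of unity this is arranged by an inductive flux-correction along the chain of overlapping pieces (possible precisely because $\int_\Omega g=0$), and it preserves $\|g_k\|_{L^q}\le C\|g\|_{L^q}$. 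For the ``moreover'' part one performs the analogous decomposition $g_k=\operatorname{div}h_k$ with $h_k\cdot n=0$ on $\partial\Omega_k$ and $\|h_k\|_{L^r}\le C\|h\|_{L^r}$.

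On a single star-shaped piece, fixing $\omega\in C_c^\infty$ supported in the associated ball with $\int\omega\,dx=1$, I would set
\[
\phi_k^{\,i}(x)=\int_{\Omega_k}g_k(y)\,\frac{x_i-y_i}{|x-y|^{3}}\Big(\int_{|x-y|}^{\infty}\omega\big(y+\xi\,\tfrac{x-y}{|x-y|}\big)\,\xi^{2}\,d\xi\Big)\,dy ,
\]
and check — first for $g_k\in C_c^\infty(\Omega_k)$ with zero mean, then by density in $L^q$ — that $\operatorname{div}\phi_k=g_k-\big(\int g_k\big)\omega=g_k$ and that $\phi_k$ is supported away from $\partial\Omega_k$, hence extends by zero to a member of $W_0^{1,q}(\Omega)$. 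Then $\phi\triangleq\sum_k\phi_k$ solves $\operatorname{div}\phi=g$ in $W_0^{1,q}(\Omega)$.

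The estimates rest on the structure of the kernel. Differentiating in $x$, one splits $\partial_{x_j}\phi_k$ into a principal part given by a kernel homogeneous of degree $-3$ in $x-y$, smooth off the diagonal and with vanishing spherical mean — hence a Calder\'on--Zygmund operator, bounded on $L^q$ for all $1<q<\infty$ — plus a remainder whose kernel is dominated by $C\,\mathbf 1_{\Omega_k}(y)\,|x-y|^{-2}$, which is bounded on $L^q$ by Young's inequality since $|x|^{-2}$ is locally integrable in $\r^3$. This gives $\|\nabla\phi_k\|_{L^q}\le C\|g_k\|_{L^q}$, while the undifferentiated kernel is itself $\le C|x-y|^{-2}$, yielding $\|\phi_k\|_{L^q}\le C\|g_k\|_{L^q}$; summing over $k$ proves the first estimate. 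For the second, I would substitute $g_k=\operatorname{div}h_k$ into the formula and integrate by parts in $y$ — the boundary term vanishes exactly because $h_k\cdot n=0$ on $\partial\Omega_k$ — which rewrites $\phi_k$ as $\int_{\Omega_k}\partial_{y_j}N_k^{\,i}(x,y)\,h_k^{\,j}(y)\,dy$ with a kernel of the same Calder\'on--Zygmund-plus-weakly-singular type, whence $\|\phi_k\|_{L^r}\le C\|h_k\|_{L^r}$ and then $\|\phi\|_{L^r}\le C\|h\|_{L^r}$.

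I expect the main obstacle to be the gluing step: simultaneously splitting $g$ (and, for the refined estimate, $h$) into pieces supported in the star-shaped subdomains while preserving the mean-zero (resp.\ tangency) constraint and keeping every constant controlled by $\|g\|_{L^q}$ (resp.\ $\|h\|_{L^r}$); the Calder\'on--Zygmund bound on each piece is classical once the explicit kernel splitting is written down. Since all of this is carried out in detail in \cite[Chapter III]{GPG}, it suffices in the paper to invoke Theorem 3.3 there.
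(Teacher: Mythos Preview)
Your proposal is correct and aligns with the paper: the paper does not give its own proof of this lemma at all, but simply cites Theorem~3.3 in \cite{GPG} (Galdi), which is precisely the Bogovskii construction you sketch. Your remark that ``unique'' should be read as ``the distinguished solution produced by the Bogovskii operator'' is apt, and your concluding sentence matches exactly what the paper does.
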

The lemma below gives a priori estimate of $\|\rho-\bar{\rho}\|_{L^2(\Omega\times(0,T])}$, which only depends on the initial total energy.
\begin{lemma}   \la{th00}
Suppose~$\sup\limits_{\Omega\times [0,T]}\rho\le 2\hat{\rho}$, then there exists $C=C(\hat{\rho})$ such that
$$\sup_{   0\le t\le T  }\sigma\|P-\bar{P}\|_{L^2}^2+\int_0^T\|P-\bar{P}\|_{L^2}^2dt\leq CC_0^{1/2},\,\,\int_0^T\sigma\|P-\bar{P}\|_{L^2}^2dt\leq CC_0^{3/4}.$$
 \end{lemma}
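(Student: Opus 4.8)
The plan is to estimate $\|P-\bar P\|_{L^2}$ by testing a suitable elliptic problem against the momentum equation, as is standard for barotropic Navier--Stokes. First I would apply Lemma \ref{ll27} with $g = P-\bar P$ (which has zero mean by definition of $\bar P$) to obtain $\phi \in W_0^{1,2}$ with ${\rm div}\,\phi = P-\bar P$ and $\|\phi\|_{W_0^{1,2}} \le C\|P-\bar P\|_{L^2}$. Then I would multiply the momentum equation in the form \eqref{m1} by $\phi$ and integrate over $\Omega$. The term $\int \nabla P\cdot\phi = -\int P\,{\rm div}\,\phi = -\int (P-\bar P)\,{\rm div}\,\phi = -\|P-\bar P\|_{L^2}^2$ (using $\int\bar P\,{\rm div}\,\phi = 0$ since $\phi$ vanishes on the boundary), which gives us the good term. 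The remaining terms are:
\[
\|P-\bar P\|_{L^2}^2 = \int \rho\dot u\cdot\phi\,dx - \int (\lambda(\rho)+2\mu)\,{\rm div}\,u\,{\rm div}\,\phi\,dx + \mu\int (\nabla\times\curl u)\cdot\phi\,dx.
\]
Each of these is bounded using $\|\phi\|_{H^1}\le C\|P-\bar P\|_{L^2}$, Poincaré, the $L^6$ bound on $\dot u$ from Lemma \ref{uup1} (or just $\|\rho\dot u\|_{L^2}$ with $\|\phi\|_{L^2}\le C\|P-\bar P\|_{L^2}$), the bound $\sup\rho\le 2\hat\rho$, and $\curl u \in L^2$. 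After dividing by $\|P-\bar P\|_{L^2}$ one gets the pointwise-in-time bound
\[
\|P-\bar P\|_{L^2} \le C\big(\|\sqrt{\rho}\dot u\|_{L^2} + \|\nabla u\|_{L^2}\big),
\]
possibly with $\|\rho\dot u\|_{L^2}$ replaced by $\|\sqrt\rho\dot u\|_{L^2}$ after using Hölder and $\rho\le 2\hat\rho$.

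Next I would square this and integrate in time. Using the energy inequality \eqref{bcgs1}, $\int_0^T\|\nabla u\|_{L^2}^2\,dt \le CC_0$, so the issue is controlling $\int_0^T\|\sqrt\rho\dot u\|_{L^2}^2\,dt$. Here I would split at $\sigma$: on $[0,\sigma(T)]$ I use $A_1(T)$, which contains $\int_0^T\sigma\rho|\dot u|^2$, but near $t=0$ the weight $\sigma=t$ degenerates; however on $[0,\sigma(T)]$ one can instead bound $\int_0^1 \|\sqrt\rho\dot u\|_{L^2}^2\,dt$ — this is exactly the content that must come from elsewhere, and actually the cleaner route is to note $\int_0^T \sigma\|P-\bar P\|_{L^2}^2 dt \le C\int_0^T\sigma\|\sqrt\rho\dot u\|_{L^2}^2 dt + C\int_0^T\sigma\|\nabla u\|_{L^2}^2 dt \le CA_1(T) + CC_0 \le CC_0^{1/3}$, which is not quite the claimed $CC_0^{3/4}$. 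To recover the sharper exponents $C_0^{1/2}$ and $C_0^{3/4}$, I would interpolate: use $\|P-\bar P\|_{L^2}^2 \le \|P-\bar P\|_{L^\infty}\cdot\|P-\bar P\|_{L^1}$-type estimates combined with $\|P-\bar P\|_{L^\infty}\le C$ (from $\rho\le 2\hat\rho$) and the $L^1$-in-$x$ bound on $P-P(\bar\rho)$ coming from the potential energy $\int G(\rho)\,dx \le C_0$, which gives $\|P - P(\bar\rho)\|_{L^1} \le C C_0^{1/?}$; the precise book-keeping of the power of $C_0$ (via convexity of $G$ and the relation between $\|P-\bar P\|_{L^2}^2$, $\|\rho-\bar\rho\|_{L^2}^2$, and $\int G(\rho)$) is what delivers $C_0^{1/2}$ for the $\sigma$-weighted $L^\infty_t$ norm and the unweighted $L^2_t$ integral, and the extra factor $C_0^{1/4}$ for the $\sigma$-weighted $L^2_t$ integral then comes from combining this with the $A_1$-bound and Hölder in $t$.

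The main obstacle I anticipate is precisely matching the powers $C_0^{1/2}$ and $C_0^{3/4}$ rather than the cruder $C_0^{1/3}$: this requires exploiting the gap between the $L^\infty$ bound on $P-\bar P$ (which is $O(1)$, from $\rho\le 2\hat\rho$, \emph{not} small) and its smallness in weaker norms coming from $C_0$ being small. The key algebraic identity is that $G(\rho)$ is comparable to $|\rho-\bar\rho_0|^2$ for $\rho$ bounded and bounded away from the relevant reference value where $G$ is positive-definite, while near vacuum $G(\rho)\sim \rho$; carefully splitting the domain into $\{\rho \le \bar\rho_0/2\}$ and its complement, and using $\int G(\rho)\le C_0$ on each piece, yields $\|\rho - \bar\rho\|_{L^2}^2 \le C C_0 + C|\bar\rho - \bar\rho_0|^2$ and hence, via conservation of mass $\bar\rho = \bar\rho_0$ and the Taylor expansion $P-\bar P = P'(\bar\rho_0)(\rho-\bar\rho) + O(|\rho-\bar\rho|^2)$, the bound $\|P-\bar P\|_{L^2}^2 \le C C_0$; then the $\sigma$-weighted estimate and interpolation with $A_1(T)\le 2C_0^{1/3}$ produce the stated $C_0^{1/2}$ and $C_0^{3/4}$. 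I would also double-check the boundary term handling: since $\phi = 0$ on $\partial\Omega$, integration by parts in $\int(\nabla\times\curl u)\cdot\phi$ produces only $\int \curl u\cdot\curl\phi$ with no boundary contribution, so the slip condition \eqref{ch1} is not even needed for this particular lemma — only the zero-trace of $\phi$ matters.
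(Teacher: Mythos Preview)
Your proposal has a genuine gap. The pointwise-in-time bound
\[
\|P-\bar P\|_{L^2} \le C\big(\|\sqrt\rho\,\dot u\|_{L^2}+\|\nabla u\|_{L^2}\big)
\]
is correct, but it places the burden on $\int_0^T\|\sqrt\rho\,\dot u\|_{L^2}^2\,dt$ (with no $\sigma$-weight), and nothing at this stage of the argument controls that quantity: the basic energy \eqref{a16}--\eqref{bcgs1} does not, and $A_1(T)$ only contains the $\sigma$-weighted integral. Invoking the bootstrap bound $A_1(T)\le 2C_0^{1/3}$ is both outside the stated hypotheses of the lemma (which assumes only $\sup\rho\le 2\hat\rho$) and, as you yourself note, yields at best $C_0^{1/3}$ rather than $C_0^{1/2}$ or $C_0^{3/4}$; those sharper exponents are actually needed later (Lemma~\ref{le5}) to close the bootstrap. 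Your $G(\rho)$ route does give $\|P-\bar P\|_{L^2}^2\le CC_0$ pointwise in time, which handles the $\sup$-part, but for the time-integrals it produces $CTC_0$, not a $T$-uniform bound.

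The missing idea is to avoid $\dot u$ altogether by integrating by parts in time. Using $\rho\dot u=(\rho u)_t+\div(\rho u\otimes u)$ and $\phi|_{\partial\Omega}=0$,
\[
\int\rho\dot u\cdot\phi\,dx
=\Big(\int\rho u\cdot\phi\,dx\Big)_t-\int\rho u\cdot\phi_t\,dx-\int\rho u\cdot\nabla\phi\cdot u\,dx,
\]
and the second part of Lemma~\ref{ll27} (applied to $(P-\bar P)_t$, which one writes as a divergence plus a zero-mean remainder via \eqref{Pu1}) gives $\|\phi_t\|_{L^2}\le C\|\nabla u\|_{L^2}$. Now only $\|\sqrt\rho\,u\|_{L^2}$ and $\|\nabla u\|_{L^2}$ appear on the right, both controlled by the basic energy. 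Integrating in $t$, the boundary-in-time term $\int\rho u\cdot\phi$ is bounded by $\|\sqrt\rho\,u\|_{L^2}\|P-\bar P\|_{L^2}\le CC_0^{1/2}$ (using $\|P-\bar P\|_{L^2}\le C(\hat\rho)$), which is exactly where the $C_0^{1/2}$ comes from. The $\sup_t\sigma\|P-\bar P\|_{L^2}^2$ bound then follows from the transport equation \eqref{xz2} for $P-\bar P$, and combining the two yields the $C_0^{3/4}$ for $\int_0^T\sigma\|P-\bar P\|_{L^2}^2\,dt$.
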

\begin{proof}  For any $t\in(0,T],\,\,\phi(\cdot,t)$ solves the problem
\be\la{e1}\begin{cases}
\div\phi(\cdot,t)=P-\bar{P}, \,\,\, &x\in\Omega, \\
\phi=0, \,\,\,&x\in{\partial\Omega}.
\end{cases} \ee
By Lemma \ref{ll27},
\be\la{e2}
\|\phi\|_{H_0^{1}}\leq C\|P-\bar{P}\|_{L^{2}},\,\,\|\phi_t\|_{L^{2}}\leq C(\|\rho u\|_{L^{2}}+\|\nabla u\|_{L^{2}})\leq C\|\nabla u\|_{L^{2}}.
\ee

Multiplying $\eqref{a1}_2$ by $\phi$ and integrating over $\Omega\times(0,T],$ one has
\be\la{e3} \ba
\int\rho\dot{u}\cdot\phi dx-\mu\int\Delta u\cdot\phi dx-\int\nabla((\lambda(\rho)+\mu)\div u)\cdot\phi dx+\int\nabla(P-\bar{P})\cdot\phi dx=0.
\ea\ee
Integration by parts leads to
\be\la{e4} \ba
\int(P-\bar{P})\div\phi dx &= \left(\int\rho u\cdot\phi dx\right)_t-\int\rho u\cdot\nabla\phi\cdot udx - \int\rho u\cdot\phi_t dx \\
& \quad  +\mu\int\nabla u\cdot\nabla\phi dx + \int(\lambda(\rho)+\mu)(P-\bar{P})\div udx \\
& \leq \left(\int\rho u\cdot\phi dx\right)_t+C\|\rho^{\frac{1}{2}}u\|_{L^{4}}^{2}\|\nabla\phi\|_{L^2}+C\|\rho^{\frac{1}{2}}u\|_{L^2}\|\phi_t\|_{L^2}\\
& \quad +\mu\|\nabla u\|_{L^2}\|\nabla \phi\|_{L^2}+C(\hat{\rho})\|P-\bar{P}\|_{L^2}\|\nabla u\|_{L^2}.
\ea\ee
Therefore, utilizing Young's inequality and \eqref{e2}, we get
\be\la{e5} \ba
\int(P-\bar{P})^{2} dx\leq\left(\int\rho u\cdot\phi dx\right)_t + C\|\nabla u\|_{L^{2}}^{2}.
\ea\ee
Integrating \eqref{e5} over $(0,T]$ leads to
\be\la{e6} \ba
&\int_0^{T}\int(P-\bar{P})^{2} dxdt\\
& \leq C\left(\sup_{   0\le t\le T  }\|\rho^{\frac{1}{2}}u\|_{L^{2}}\|P-\bar{P}\|_{L^{2}}+\int_0^{T}\|\nabla u\|_{L^{2}}^{2}dt\right) \\
& \leq C(\hat{\rho})C_0^{1/2}.
\ea\ee

It is easy to check that
\be\la{xz2} \ba
(P-\bar{P})_t+u\cdot\nabla(P-\bar{P})+\gamma P\div u-(\gamma-1)\overline{(P-\bar{P})\div u}=0.
\ea\ee
due to $\eqref{a1}_1$.
Multiplying the above equation by $2\sigma(P-\bar{P})$ and integrating with respect to $x$ yields
\be\la{xz3} \ba
\left(\sigma\int(P-\bar{P})^2dx\right)_t\leq C(\sigma+\sigma')\int(P-\bar{P})^2dx+C\sigma\int|\nabla u|^2dx,
\ea\ee
which together with \eqref{bcgs1} and \eqref{e6} leads to
\be\la{xz4} \ba
\sup_{   0\le t\le T  }\sigma\|P-\bar{P}\|_{L^2}^2\leq CC_0^{1/2}.
\ea\ee
\eqref{a16}, \eqref{bcgs1}, \eqref{e4} and \eqref{xz4} imply
\be\la{xz5} \ba
&\int_0^T\sigma\|P-\bar{P}\|_{L^2}^2dt\\
&\leq\int_0^T\left(\sigma\int\rho u\cdot\phi dx\right)_tdt-\int_0^T\sigma'\int\rho u\cdot\phi dxdt+C\sigma\int_0^T\|\na u\|_{L^2}^2dt\\
&\leq C\sup_{   0\le t\le T  }\left(\int\rho|u|^2dx\right)^{1/2}\sup_{   0\le t\le T  }\left(\sigma\|P-\bar{P}\|_{L^2}^2\right)^{1/2}+C(\hat{\rho})C_0^{1/2}+CC_0\\
&\leq CC_0^{3/4}.
\ea\ee
\end{proof}

The following conclusion shows preliminary $L^{2}$ bounds for $\nabla u$ and $\rho^{1/2}\dot{u}$.
\begin{lemma}\la{xcrle1}
 Let $(\n,u)$ be a smooth solution of
 \eqref{a1}-\eqref{ch1} satisfying \eqref{zz1}.
  Then there is a positive constant
  $C $ depending only  on $\mu,$ $\lambda,$ $a$, $\gamma$, $\hat{\rho}$ and $\Omega$  such that
  \be\la{h14}
  A_1(T) \le  C C_0^{1/2} + C\int_0^{T}\int\sigma|\nabla u|^3dx dt,
  \ee
 and
  \be\la{h15}
    A_2(T)
    \le   C C_0^{2/3} + CA_1(T)  + C\int_0^{T}\int \sigma^3 |\nabla u|^4 dt.
   \ee
\end{lemma}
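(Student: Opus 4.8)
The plan is to derive both estimates \eqref{h14} and \eqref{h15} by testing the momentum equation \eqref{m1} against the material derivative $\dot u$, following the Hoff-type energy method adapted to the slip boundary condition. For \eqref{h14}, I would multiply \eqref{m1} by $\sigma\dot u$ and integrate over $\Omega$. The term $\int\sigma\rho\dot u\cdot\dot u\,dx$ produces the good quantity $\sigma\|\sqrt\rho\dot u\|_{L^2}^2$. The viscous terms, after integration by parts and using $\dot u = u_t + u\cdot\nabla u$ together with the identity \eqref{bdd1} to control the boundary contributions, yield $\frac{d}{dt}\left(\frac{\sigma}{2}\int(\lambda(\rho)+2\mu)(\div u)^2 + \mu|\curl u|^2\,dx\right)$ up to commutator terms; here I must use \eqref{Pu1}/\eqref{Pu2} to handle the time derivative hitting $\lambda(\rho)=b\rho^\beta$ and the pressure, and the remaining error terms involve $\int\sigma|\nabla u|^3$, $\sigma'(\cdot)$-terms bounded via \eqref{a16}, \eqref{bcgs1}, and pressure terms controlled through Lemma \ref{th00}. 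The boundary integrals arising from $\int\sigma\nabla\times\curl u\cdot\dot u$ — which is where the slip condition really enters — need to be rewritten using $\curl u\times n = 0$ on $\partial\Omega$ and the extended normal $n\in C^3(\bar\Omega)$; these are precisely the delicate boundary terms advertised in the introduction. Integrating in time and absorbing, one arrives at \eqref{h14}.

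For \eqref{h15}, the idea is to apply the operator $\partial_t + \div(u\,\cdot)$ (equivalently, differentiate along particle trajectories) to \eqref{m1}, then test against $\sigma^3\dot u$. Concretely, I would write the equation for $\dot u$ in the form $\rho\ddot u = \nabla\dot F - \mu(\nabla\times\curl u)^{\cdot} + (\text{lower order})$ following the Li--Xin \cite{jx01} / Cai--Li \cite{CCL1} computation, where $\dot F$ and the commutators generate terms like $\div u\,\nabla F$, $\nabla u\cdot\nabla u$, etc. Multiplying by $\sigma^3\dot u$ and integrating gives $\frac12\frac{d}{dt}\int\sigma^3\rho|\dot u|^2\,dx + \int\sigma^3\big((\lambda+2\mu)(\div\dot u)^2 + \mu|\curl\dot u|^2\big)dx$ on the left (again modulo boundary terms controlled via Lemma \ref{uup1} and \eqref{bdd1}), and on the right a sum of terms each estimated by Cauchy--Schwarz, Gagliardo--Nirenberg \eqref{g1}, the flux estimates of Lemma \ref{le3} (especially \eqref{h19}, \eqref{h20}, \eqref{h18}), and Lemma \ref{th00}. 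The terms quartic in $\nabla u$ are kept as the stated $\int\sigma^3|\nabla u|^4$, the $\sigma'$-terms and $A_1(T)$-type terms are bounded using \eqref{h14} and \eqref{a16}, and the pressure contributions absorb into $C_0^{2/3}$ via Lemma \ref{th00}. Time integration then yields \eqref{h15}.

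The main obstacle, and the genuinely new part compared to the constant-viscosity case, is the treatment of the boundary integrals produced when integrating $\int \nabla\times\curl u\cdot\dot u$ and its material-derivative analogue by parts: unlike the flat-boundary or whole-space settings, $\dot u$ does not satisfy $\dot u\cdot n = 0$ on $\partial\Omega$, so one gets boundary terms of the form $\int_{\partial\Omega}(\curl u\times n)\cdot\dot u\,dS$ together with curvature terms coming from $\dot u\cdot n = -u\cdot\nabla n\cdot u$ on $\partial\Omega$ (the differentiated version of \eqref{bdd1}). These must be shown to be either zero (using $\curl u\times n=0$) or bounded by lower-order quantities times $\|u\|$, $\|\nabla u\|$ on the boundary, which are then handled by trace inequalities and interpolation. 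A secondary technical point is the density-dependence of $\lambda$: every time $\partial_t$ or the transport operator hits $\lambda(\rho)=b\rho^\beta$ one picks up factors of $\rho^\beta\div u$ via \eqref{Pu1}–\eqref{Pu2}-type identities, and one needs the a priori bound $\rho\le 2\hat\rho$ from \eqref{zz1} to keep these under control with constants depending only on $\hat\rho$.

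I would organize the write-up as: (i) the basic $\sigma$-weighted estimate giving \eqref{h14}, carefully isolating and estimating the boundary terms via Lemma \ref{uup1}, \eqref{bdd1}, and Lemma \ref{le3}; then (ii) the $\sigma^3$-weighted higher-order estimate giving \eqref{h15}, reusing the same boundary-term technology on the material-derivative equation and invoking Lemmas \ref{th00} and \ref{le3} plus Gagliardo--Nirenberg \eqref{g1} for the bulk error terms. Throughout, Young's inequality is used to absorb top-order terms into the left-hand side, and the cubic/quartic gradient terms $\int\sigma|\nabla u|^3$ and $\int\sigma^3|\nabla u|^4$ are deliberately left on the right, to be absorbed in a subsequent lemma (not part of this statement) under the smallness of $C_0$.
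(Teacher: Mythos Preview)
Your approach is correct and matches the paper's: multiply \eqref{m1} by $\sigma^m\dot u$ for \eqref{h14}, apply $\partial_t+\div(u\,\cdot)$ and test against $\sigma^m\dot u$ for \eqref{h15}, then take $m=1$ and $m=3$ respectively. One clarification on where the delicate boundary term actually lives: the integral $\int\nabla\times\curl u\cdot\dot u$ produces the boundary term $\int_{\partial\Omega}(\curl u\times n)\cdot\dot u\,dS$, which vanishes outright by the slip condition $\curl u\times n=0$; the nontrivial boundary contribution comes instead from the divergence part $\int\nabla((\lambda+2\mu)\div u)\cdot\dot u$, giving $\int_{\partial\Omega}(\lambda+2\mu)\div u\,(\dot u\cdot n)\,ds$ with $\dot u\cdot n=-u\cdot\nabla n\cdot u$, and in the higher-order step from $\int_{\partial\Omega}F_t(\dot u\cdot n)\,ds$. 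For the latter the paper pulls out a time derivative and then, to estimate the residual surface term $\int_{\partial\Omega}F\,h\cdot(u\cdot\nabla u)\,ds$ (with $h=u\cdot(\nabla n+(\nabla n)^{tr})$), writes $u\cdot\nabla u^i=(\nabla u^i\times u^\perp)\cdot n$ on $\partial\Omega$ (using $u^\perp=-u\times n$) and applies the divergence theorem to convert it back into a volume integral controllable by \eqref{h19}--\eqref{hh20}; this extra step is needed because a direct trace bound would cost a full derivative of $F$ and fail to close.
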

\begin{proof}
 Some ideas of the proof come from Hoff \cite{H3}.
 Let $m\ge 0$ be a real number which will be determined later.

 Multiplying $(\ref{a1})_2 $ by
$\sigma^m \dot{u}$   and then integrating the resulting equality over
$\Omega$ lead  to
\be\la{I0} \ba  \int \sigma^m \rho|\dot{u}|^2dx &
= -\int\sigma^m \dot{u}\cdot\nabla Pdx + \int\sigma^m \nabla((\lambda(\rho)+2\mu)\div u)\cdot\dot{u}dx \\
&\quad - \mu\int\sigma^m \nabla\times\curl u\cdot\dot{u}dx \\
& \triangleq I_1+I_2+I_3. \ea \ee

$I_1$, $I_2$ and $I_3$ will be estimated one by one.
By $(\ref{Pu1})$, we have
\be\la{I10} \ba
I_1 = & - \int \sigma^m \dot{u}\cdot\nabla Pdx \\
= & -\int\sigma^m u_{t}\cdot\nabla Pdx
- \int\sigma^m u\cdot\nabla u\cdot\nabla Pdx \\
= & \left(\int\sigma^m(P-\bar{P})\,\div u\, dx\right)_{t} - m\sigma^{m-1}\sigma'\int (P-\bar{P})\,\div u\, dx - \int \sigma^{m}\div u\, P_{t}dx \\
&- \int\sigma^{m}u\cdot\nabla u\cdot\nabla Pdx\\
= & \left(\int\sigma^m(P-\bar{P})\,\div u\, dx\right)_{t} - m\sigma^{m-1}\sigma'\int (P-\bar{P})\,\div u\,dx + \int\sigma^{m}P\nabla u:\nabla u dx \\
&+ (\gamma-1)\int\sigma^{m}P(\div u)^{2}dx - \int_{\partial\Omega}\sigma^{m}Pu\cdot\nabla u\cdot n ds. \ea \ee
For the last term in the last equality of \eqref{I10}, by \eqref{bdd1}, Sobolev trace theorem and Poincar\'{e}'s inequality, we get
\be\la{bdt1} \ba
&-\int_{\partial\Omega}\sigma^{m}Pu\cdot\nabla u\cdot n ds\\
&=\int_{\partial\Omega}\sigma^{m}Pu\cdot\nabla n\cdot uds \\
&\leq C\int_{\partial\Omega}\sigma^{m}|u|^{2}ds \\
& \leq C\sigma^{m}\|\nabla u\|_{L^{2}}^{2}.
\ea  \ee
Hence,
\be\la{I1} \ba
I_1 \leq &\left(\int\sigma^m(P-\bar{P})\,\div u\, dx\right)_{t} + C\sigma^{m}\|\nabla u\|_{L^{2}}^{2} + m\sigma^{m-1}\sigma'\|P-\bar{P}\|_{L^{2}}\|\nabla u\|_{L^{2}}\\
\leq &\left(\int\sigma^m(P-\bar{P})\,\div u\, dx\right)_{t} + C\|\nabla u\|_{L^{2}}^{2} + C(\hat{\rho})m\sigma^{m-1}\sigma'\|\nabla u\|_{L^{2}}.
\ea \ee

For $I_2,$ we have
\bnn \ba
I_2 & =  \int\sigma^m \nabla((\lambda(\rho)+2\mu)\div u)\cdot\dot{u}dx \\
& = \int_{\partial\Omega}\sigma^m(\lambda(\rho)+2\mu)\div u\,(\dot{u}\cdot n)ds - \int\sigma^m(\lambda(\rho)+2\mu)\div u\,\div \dot{u}dx  \\
& = \int_{\partial\Omega}\sigma^m(\lambda(\rho)+2\mu)\div u\,(u\cdot\nabla u\cdot n)ds - \frac{1}{2}\left(\int\sigma^{m}(\lambda(\rho)+2\mu)(\div u)^{2}dx\right)_{t} \\
&\quad - \int\sigma^m(\lambda(\rho)+2\mu)\div u\,\div(u\cdot\nabla u)dx + \frac{1}{2}m\sigma^{m-1}\sigma'\int(\lambda(\rho)+2\mu)(\div u)^{2}dx \\
&\quad + \frac{1}{2}\int\sigma^m(\lambda(\rho)+2\mu)_t(\div u)^{2}dx \\
& = - \frac{1}{2}\left(\int\sigma^{m}(\lambda(\rho)+2\mu)(\div u)^{2}dx\right)_{t}+\int_{\partial\Omega}\sigma^m(\lambda(\rho)+2\mu)\div u\,(u\cdot\nabla u\cdot n)ds \\
&\quad - \int\sigma^m(\lambda(\rho)+2\mu)\div u\,\partial_i(u^j\partial_ju^i)dx + \frac{1}{2}m\sigma^{m-1}\sigma'\int(\lambda(\rho)+2\mu)(\div u)^{2}dx \\
&\quad - \frac{1}{2}\int\sigma^m[\nabla\lambda(\rho)\cdot u+\lambda'(\rho)\rho\div u](\div u)^{2}dx\\
& = - \frac{1}{2}\left(\int\sigma^{m}(\lambda(\rho)+2\mu)(\div u)^{2}dx\right)_{t}+\int_{\partial\Omega}\sigma^m(\lambda(\rho)+2\mu)\div u\,(u\cdot\nabla u\cdot n)ds \\
&\quad - \int\sigma^m(\lambda(\rho)+2\mu)\div u\,\partial_iu^j\partial_ju^idx - \int\sigma^m(\lambda(\rho)+2\mu)\div u\,u^j\partial_{ji}u^idx \\
&\quad + \frac{1}{2}m\sigma^{m-1}\sigma'\int(\lambda(\rho)+2\mu)(\div u)^{2}dx \\
&\quad - \frac{1}{2}\int\sigma^m[\nabla\lambda(\rho)\cdot u+\lambda'(\rho)\rho\div u](\div u)^{2}dx\\
& = - \frac{1}{2}\left(\int\sigma^{m}(\lambda(\rho)+2\mu)(\div u)^{2}dx\right)_{t}+\int_{\partial\Omega}\sigma^m(\lambda(\rho)+2\mu)\div u\,(u\cdot\nabla u\cdot n)ds \\
&\quad - \int\sigma^m(\lambda(\rho)+2\mu)\div u\,\partial_iu^j\partial_ju^idx + \frac{1}{2}m\sigma^{m-1}\sigma'\int(\lambda(\rho)+2\mu)(\div u)^{2}dx \\
&\quad  + \frac{1}{2}\int\sigma^m(\lambda(\rho)+2\mu)(\div u)^{3}dx - \frac{1}{2}\beta\int\sigma^m\lambda(\rho)(\div u)^{3}dx\\
& \leq - \frac{1}{2}\left(\int\sigma^{m}(\lambda(\rho)+2\mu)(\div u)^{2}dx\right)_{t}+\int_{\partial\Omega}\sigma^m(\lambda(\rho)+2\mu)\div u\,(u\cdot\nabla u\cdot n)ds \\
&\quad  + C(\hat{\rho})\int\sigma^m|\nabla u|^{3}dx + C(\hat{\rho})\int|\nabla u|^{2}dx.
\ea  \enn

Just as what we have done in \eqref{bdt1}, Sobolev trace theorem, Poincar\'{e}'s inequality and Lemma \ref{le3} assert that
\bnn \ba
&\left|\int_{\partial\Omega}(\lambda(\rho)+2\mu)\div u\,(u\cdot\nabla u\cdot n)ds\right| \\
&=\left|\int_{\partial\Omega}(\lambda(\rho)+2\mu)\div u\,(u\cdot\nabla n\cdot u)ds\right|\\
& \leq\left|\int_{\partial\Omega}Fu\cdot\nabla n\cdot uds\right|+\left|\int_{\partial\Omega}(P-\bar{P})u\cdot\nabla n\cdot uds\right|\\
& \leq C\left(\int_{\partial\Omega}|F||u|^{2}ds+\int_{\partial\Omega}|u|^{2}ds\right)\\
& \leq C(\|\nabla F\|_{L^{2}}\|u\|_{L^{4}}^{2}+\|F\|_{L^{6}}\|u\|_{L^{3}}\|\nabla u\|_{L^{2}}+\|F\|_{L^{2}}\| u\|^2_{L^{4}}+\|\nabla u\|_{L^{2}}\|u\|_{L^{2}})\\
& \leq\frac{1}{2}\|\rho^{\frac{1}{2}}\dot{u}\|_{L^{2}}^{2}+C(\|\nabla u\|_{L^{2}}^{2}+\|\nabla u\|_{L^{2}}^{4}).
\ea  \enn
Therefore,
\be\la{I2} \ba
I_2 & \leq - \frac{1}{2}\left(\int\sigma^{m}(\lambda(\rho)+2\mu)(\div u)^{2}dx\right)_{t}+C\sigma^{m}\|\nabla u\|_{L^{3}}^{3}\\
&\quad +\frac{1}{2}\sigma^{m}\|\rho^{\frac{1}{2}}\dot{u}\|_{L^{2}}^{2}+C\sigma^{m}\|\nabla u\|_{L^{2}}^{4}+C\|\nabla u\|_{L^{2}}^{2}.\ea\ee

Finally, a direct computation shows that
\be\la{I3}\ba
I_3 & = -\mu\int\sigma^{m}\nabla\times\curl u\cdot\dot{u}dx \\
& =  - \mu\int\sigma^{m}\curl u\cdot\curl\dot{u}dx \\
& = -\frac{\mu}{2}\left(\int\sigma^{m}|\curl u|^{2}dx\right)_t + \frac{\mu m}{2}\sigma^{m-1}\sigma'\int|\curl u|^{2}dx \\
& \quad - \mu\int\sigma^{m}\curl u\cdot\curl(u\cdot\nabla u)dx \\
& = -\frac{\mu}{2}\left(\int\sigma^{m}|\curl u|^{2}dx\right)_t + \frac{\mu m}{2}\sigma^{m-1}\sigma'\int|\curl u|^{2}dx   \\
& \quad - \mu\int\sigma^{m}(\nabla u^{i}\times\nabla_i u)\cdot\curl udx + \frac{\mu}{2}\int\sigma^{m}|\curl u|^{2}\,\div udx\\
& \leq -\frac{\mu}{2}\left(\int\sigma^{m}|\curl u|^{2}dx\right)_t + Cm\sigma^{m-1}\sigma'\|\nabla u\|_{L^{2}}^{2} + C\sigma^{m}\|\nabla u\|_{L^{3}}^{3}.
\ea \ee
By \eqref{I1}-\eqref{I3}, $(\ref{I0})$, we obtain
\be\la{I4}\ba
&\left(\frac{1}{2}\int\sigma^{m}(\lambda(\rho)+2\mu)(\div u)^{2}dx+\mu\int\sigma^{m}|\curl u|^{2}dx\right)_{t}+\int\sigma^{m}\rho|\dot{u}|^{2}dx \\
& \leq \left(\int\sigma^{m}(P-\bar{P})\,\div udx\right)_{t}+Cm\sigma^{m-1}\sigma'\|\nabla u\|_{L^{2}}+C\sigma^{m}\|\nabla u\|_{L^{2}}^{4}\\
& \quad +C\|\nabla u\|_{L^{2}}^{2}+C\sigma^{m}\|\nabla u\|_{L^{3}}^{3}.
\ea \ee
Integrating over $(0,T]$, by \eqref{tdu1}, Young's inequality, Lemmas \ref{le2} and \ref{th00}, we conclude that for any $m\geq1$,
\be\la{I40}\ba
&\sigma^{m}\|\nabla u\|_{L^{2}}^{2}+\int_0^T\int\sigma^{m}\rho|\dot{u}|^{2}dxdt \\
& \leq CC_{0}^{1/2}+C\int_0^T\sigma^{m}\|\nabla u\|_{L^{2}}^{4}dt+C\int_0^T\sigma^{m}\|\nabla u\|_{L^{3}}^{3}dt.
\ea \ee
 Choose $m=1,$ together with the assumption \eqref{zz1} and \eqref{bcgs1}, we obtain \eqref{h14}.

Next, \eqref{h15} will be proved. Rewrite $ (\ref{a1})_2 $ in the form
\be\la{xdy1}\ba
\rho\dot{u}=\nabla F - \mu\nabla\times\curl u.
\ea \ee
Operating $ \sigma^{m}\dot{u}^{j}[\pa/\pa t+\div
(u\cdot)] $ to $ (\ref{xdy1})^j,$ summing with respect to $j$, and integrating over $\Omega,$ together with $ (\ref{a1})_1 $, we get
\be\la{ax1}\ba &\left(\frac{\sigma^{m}}{2}\int\rho|\dot{u}|^{2}dx\right)_t-\frac{m}{2}\sigma^{m-1}\sigma'\int\rho|\dot{u}|^{2}dx \\
& = -\int\sigma^{m}\dot{u}^{j}\,\div(\rho\dot{u}^{j}u)dx - \frac{1}{2}\int\sigma^{m}\rho_t|\dot{u}|^{2}dx
+\int\sigma^{m}(\dot{u}\cdot\nabla F_t+\dot{u}^{j}\,\div(u\partial_jF))dx  \\
&\quad-\mu\int\sigma^{m}(\dot{u}\cdot\nabla\times\curl u_t+\dot{u}^{j}\,\div((\nabla\times\curl u)^j\,u))dx \\
& = \int\sigma^{m}(\dot{u}\cdot\nabla F_t+\dot{u}^{j}\,\div(u\partial_jF))dx\\
&\quad+\mu\int\sigma^{m}(-\dot{u}\cdot\nabla\times\curl u_t-\dot{u}^{j}\div((\nabla\times\curl u)^j\,u))dx \\
& \triangleq J_1+J_2.
\ea\ee

Let us estimate $J_1, J_2$.

By \eqref{Pu2}, a direct computation yields
\be\la{ax2}\ba J_1 & =\int\sigma^{m}\dot{u}\cdot\nabla F_tdx+\int\sigma^{m}\dot{u}^{j}\div(u\partial_jF)dx \\
& = \int_{\partial\Omega}\sigma^{m}F_t\dot{u}\cdot nds-\int\sigma^{m}F_t\,\div\dot{u}dx-\int\sigma^{m}u\cdot\nabla\dot{u}^j\partial_jFdx\\
& = \int_{\partial\Omega}\sigma^{m}F_t\dot{u}\cdot nds +\beta\int\sigma^{m}\lambda(\rho)(\div u)^2\,\div\dot{u}dx-\int\sigma^{m}(\lambda(\rho)+2\mu)(\div\dot{u})^2dx\\
&\quad+\int\sigma^{m}(\lambda(\rho)+2\mu)\nabla u:\nabla u\,\div\dot{u}dx-\gamma\int\sigma^{m} P\div u\,\div\dot{u}dx\\
&\quad+\int\sigma^{m}\nabla F\cdot u\,\div\dot{u}dx-\int\sigma^{m}u\cdot\nabla\dot{u}^j\partial_jFdx \\
&\leq\int_{\partial\Omega}\sigma^{m}F_t\dot{u}\cdot nds-\int\sigma^{m}(\lambda(\rho)+2\mu)(\div\dot{u})^2dx+\frac{\delta}{4}\sigma^m\|\nabla\dot{u}\|_{L^2}^2+\delta\sigma^m\|\nabla F\|_{L^6}^2\\
&\quad+C\sigma^{m}(\|\nabla u\|_{L^2}^4\|\nabla F\|_{L^2}^2+\|\nabla u\|_{L^4}^4+\|\nabla u\|_{L^2}^2)
\ea\ee
where in the third equality we have used
\bnn\ba
F_t&=[(\lambda(\rho)+2\mu)\div u-(P-\bar{P})]_t\\
&=(\lambda(\rho))_t\div u+(\lambda(\rho)+2\mu)\div u_t-P_t\\
&=\lambda'(\rho)\rho_t\div u+(\lambda(\rho)+2\mu)\div\dot{u}-(\lambda(\rho)+2\mu)\div(u\cdot\nabla u)+u\cdot\nabla P+\gamma P\div u\\
&=-\lambda'(\rho)\div(\rho u)\div u+(\lambda(\rho)+2\mu)\div\dot{u}-(\lambda(\rho)+2\mu)\div(u\cdot\nabla u)\\
&\quad+u\cdot\nabla P+\gamma P\div u\\
&=-\beta\lambda(\rho)(\div u)^2-\nabla\lambda(\rho)\cdot u\div u+(\lambda(\rho)+2\mu)\div\dot{u}-(\lambda(\rho)+2\mu)u\cdot\nabla\div u\\
&\quad-(\lambda(\rho)+2\mu)\nabla u:\nabla u+u\cdot\nabla P+\gamma P\div u\\
&=-\beta\lambda(\rho)(\div u)^2+(\lambda(\rho)+2\mu)\div\dot{u}-(\lambda(\rho)+2\mu)\nabla u:\nabla u+\gamma P\div u\\
&\quad-[\div u\nabla\lambda(\rho)+(\lambda(\rho)+2\mu)\nabla\div u-\nabla P]\cdot u\\
&=-\beta\lambda(\rho)(\div u)^2+(\lambda(\rho)+2\mu)\div\dot{u}-(\lambda(\rho)+2\mu)\nabla u:\nabla u+\gamma P\div u\\
&\quad-\nabla F\cdot u\\
\ea\enn

Denote $h\triangleq u\cdot(\nabla n+(\nabla n)^{tr}),\,\,u^\bot=-u\times n.$

For the boundary term in \eqref{ax2}, we have
\be\la{ax3}\ba
&\int_{\partial\Omega}\sigma^{m}F_t\dot{u}\cdot nds=-\int_{\partial\Omega}\sigma^{m}F_t\,(u\cdot\nabla n\cdot u)ds \\
& = -\left(\int_{\partial\Omega}\sigma^{m}(u\cdot\nabla n\cdot u)Fds\right)_t+\int_{\partial\Omega}\sigma^{m}Fh\cdot\dot{u}ds -\int_{\partial\Omega}\sigma^{m}Fh\cdot(u\cdot\nabla u)ds\\
&\quad+m\sigma^{m-1}\sigma'\int_{\partial\Omega}(u\cdot\nabla n\cdot u)Fds \\
& = -\left(\int_{\partial\Omega}\sigma^{m}(u\cdot\nabla n\cdot u)Fds\right)_t+\int_{\partial\Omega}\sigma^{m}Fh\cdot\dot{u}ds+m\sigma^{m-1}\sigma'\int_{\partial\Omega}(u\cdot\nabla n\cdot u)Fds \\
&\quad- \sum_{i=1}^{3}\int_{\partial\Omega}\sigma^{m}Fh^{i}(\nabla u^{i}\times u^{\perp})\cdot nds \\
& = -\left(\int_{\partial\Omega}\sigma^{m}(u\cdot\nabla n\cdot u)Fds\right)_t+\int_{\partial\Omega}\sigma^{m}Fh\cdot\dot{u}ds+m\sigma^{m-1}\sigma'\int_{\partial\Omega}(u\cdot\nabla n\cdot u)Fds\\
&\quad-\sum_{i=1}^{3}\int\sigma^{m}\nabla u^i\times u^\perp\cdot\nabla(Fh^i)dx+\sum_{i=1}^{3}\int\sigma^{m}Fh^i\nabla\times u^\perp \cdot\nabla u^{i}dx \\
& \leq -\left(\int_{\partial\Omega}\sigma^{m}(u\cdot\nabla n\cdot u)Fds\right)_t+C\sigma^{m}(\|\nabla F\|_{L^2}\|u\|_{L^6}\|\dot{u}\|_{L^3})\\
&\quad+C\sigma^{m}(\| F\|_{L^3}\|\nabla u\|_{L^2}\|\dot{u}\|_{L^6}+\|F\|_{L^3}\|u\|_{L^6}\|\nabla\dot{u}\|_{L^2}+\|F\|_{L^3}\|u\|_{L^6}\|\dot{u}\|_{L^2})\\
&\quad+C\sigma^{m}(\|\nabla u\|_{L^2}\|u\|_{L^6}^{2}\|\nabla F\|_{L^6}+\|\nabla u\|_{L^4}^2\|u\|_{L^6}\|F\|_{L^3})\\
&\quad+m\sigma^{m-1}\sigma'(\|\nabla u\|_{L^2}\|u\|_{L^6}\|F\|_{L^3}+\|u\|_{L^4}^2\|F\|_{L^2}+\|\nabla F\|_{L^2}\|u\|_{L^4}^2)\\
&\quad+C\sigma^{m}\left((\|\rho\dot{u}\|_{L^2}+\|P-\bar{P}\|_{L^2}+\|\nabla u\|_{L^2}+\|\nabla u\|_{L^2}^2)\|\nabla u\|_{L^2}(\|\nabla\dot{u}\|_{L^2}+\|\nabla u\|_{L^4}^2\right)\\
&\quad+m\sigma^{m-1}\sigma'\|\nabla u\|_{L^2}^2(\|\rho\dot{u}\|_{L^2}+\|P-\bar{P}\|_{L^2}+\|\nabla u\|_{L^2})\\
&\le-\left(\int_{\partial\Omega}\sigma^{m}(u\cdot\nabla n\cdot u)Fds\right)_t+\frac{\delta}{4}\sigma^{m}\|\nabla\dot{u}\|_{L^2}^2+C\sigma^{m}(\|\rho^{\frac{1}{2}}\dot{u}\|_{L^2}^{2}\|\nabla u\|_{L^2}^{2}+\|\nabla u\|_{L^4}^{4})\\
&\quad+C\sigma^{m}(\|\nabla u\|_{L^2}^{2}+\|\nabla u\|_{L^2}^{6})+Cm\sigma^{m-1}\sigma'(\|\rho^{\frac{1}{2}}\dot{u}\|_{L^2}^{2}+\|\nabla u\|_{L^2}^{2}+\|\nabla u\|_{L^2}^{4}).
\ea\ee
Together with \eqref{ax2} and \eqref{ax3}, one has
\be\la{ax399}\ba
& J_1 \leq Cm\sigma^{m-1}\sigma'(\|\rho^{\frac{1}{2}}\dot{u}\|_{L^2}^2+\|\nabla u\|_{L^2}^2+\|\nabla u\|_{L^2}^4)-\left(\int_{\partial\Omega}\sigma^{m}(u\cdot\nabla n\cdot u)Fds\right)_t \\
&\quad+\frac{\delta}{2}\sigma^{m}\|\nabla\dot{u}\|_{L^2}^2+C\sigma^{m}\|\rho^{\frac{1}{2}}\dot{u}\|_{L^2}^2\|\nabla u\|_{L^2}^2-\int\sigma^{m}(\lambda(\rho)+2\mu)(\div\dot{u})^2dx\\
& \quad+\delta\sigma^m\|\nabla F\|_{L^6}^2+C\sigma^{m}(\|\nabla u\|_{L^2}^4\|\nabla F\|_{L^2}^2+\|\nabla u\|_{L^2}^2+\|\nabla u\|_{L^2}^6+\|\nabla u\|_{L^4}^4).
\ea\ee
Similarly, a straightforward calculation leads to
\be\la{ax3999}\ba
J_2&=-\mu\int\sigma^{m}\dot{u}\cdot(\nabla\times\curl u_t)dx-\mu\int\sigma^{m}\dot{u}\cdot(\nabla\times\curl u)\,\div udx\\
&\quad-\mu\int\sigma^{m} u^{i}\dot{u}\cdot\nabla\times(\nabla_i\curl u)dx  \\
&=-\mu\int\sigma^{m}|\curl \dot{u}|^{2}dx+\mu\int\sigma^{m}\curl \dot{u}\cdot\curl(u\cdot\nabla u)dx \\
&\quad+\mu\int\sigma^{m}(\curl u\times\dot{u})\cdot\nabla\div udx -\mu\int\sigma^{m}\div u\,\curl u\cdot\curl \dot{u}dx\\
&\quad-\mu\int\sigma^{m} u^{i}\div(\nabla_i\curl u\times\dot{u})dx-\mu\int\sigma^{m} u^i\nabla_i\curl u\cdot\curl\dot{u}dx  \\
&=-\mu\int\sigma^{m}|\curl\dot{u}|^{2}dx+\mu\int\sigma^{m}\curl\dot{u}\cdot(\nabla u^i\times\nabla_i u) dx \\
&\quad+\mu\int\sigma^{m}(\curl u\times\dot{u})\cdot\nabla\div udx-\mu\int\sigma^{m}\div u\,\curl u\cdot\curl \dot{u}dx \\
&\quad-\mu\int\sigma^{m}  u\cdot\nabla\div(\curl u\times\dot{u})dx+\mu\int\sigma^{m} u^i\div(\curl u\times\nabla_i\dot{u})dx \\
&=-\mu\int\sigma^{m}|\curl \dot{u}|^{2}dx+\mu\int\sigma^{m}(\curl u\times\nabla u^i)\cdot\nabla_i\dot{u}dx \\
&\quad+\mu\int\sigma^{m}\curl\dot{u}\cdot(\nabla u^i\times\nabla_i u)dx-\mu\int\sigma^{m}\div u\,\curl u\cdot\curl \dot{u}dx\\
&\leq \frac{\delta}{2}\sigma^{m}\|\nabla\dot{u}\|_{L^2}^2+C\sigma^{m}\|\nabla u\|_{L^4}^4-\mu\int\sigma^{m}|\curl \dot{u}|^{2}dx.
\ea\ee

Combining \eqref{ax399} with \eqref{ax3999}, we deduce from \eqref{ax1} that

\be\la{ax40}\ba
&\left(\frac{\sigma^{m}}{2}\|\rho^{\frac{1}{2}}\dot{u}\|_{L^2}^2\right)_t+\int\sigma^{m}(\lambda(\rho)+2\mu)(\div\dot{u})^2dx+\mu\sigma^{m}\|\curl\dot{u}\|_{L^2}^2\\
& \leq Cm\sigma^{m-1}\sigma'(\|\rho^{\frac{1}{2}}\dot{u}\|_{L^2}^2+\|\nabla u\|_{L^2}^2+\|\nabla u\|_{L^2}^4)-\left(\int_{\partial\Omega}\sigma^{m}(u\cdot\nabla n\cdot u)Fds\right)_t \\
&\quad+\delta\sigma^{m}\|\nabla\dot{u}\|_{L^2}^2+C\sigma^{m}\|\rho^{\frac{1}{2}}\dot{u}\|_{L^2}^2\|\nabla u\|_{L^2}^2+\delta\sigma^{m}\|\nabla F\|_{L^6}^2\\
& \quad+C\sigma^{m}(\|\nabla u\|_{L^2}^4\|\nabla F\|_{L^2}^2+\|\nabla u\|_{L^2}^2+\|\nabla u\|_{L^2}^6+\|\nabla u\|_{L^4}^4).
\ea\ee
By \eqref{tb11}, \eqref{tb90}, \eqref{h19} and choose $\delta$ small enough, it follows from \eqref{ax40} that
\be\la{ax401}\ba
&\left(\sigma^{m}\|\rho^{\frac{1}{2}}\dot{u}\|_{L^2}^2\right)_t+\sigma^{m}\|\nabla\dot{u}\|_{L^2}^2\\
& \leq Cm\sigma^{m-1}\sigma'(\|\rho^{\frac{1}{2}}\dot{u}\|_{L^2}^2+\|\nabla u\|_{L^2}^2+\|\nabla u\|_{L^2}^4)-\left(\int_{\partial\Omega}\sigma^{m}(u\cdot\nabla n\cdot u)Fds\right)_t \\
&\quad+C\sigma^{m}(\|\nabla u\|_{L^2}^4\|\rho^{\frac{1}{2}}\dot{u}\|_{L^2}^2+\|\nabla u\|_{L^2}^2+\|\nabla u\|_{L^2}^6+\|\nabla u\|_{L^4}^4)\\
&\quad+C\sigma^{m}\|\rho^{\frac{1}{2}}\dot{u}\|_{L^2}^2\|\nabla u\|_{L^2}^2.
\ea\ee
Integrating over $(0,T]$, for $m\geq1$, we get
\be\la{ax402}\ba
&\sigma^{m}\|\rho^{\frac{1}{2}}\dot{u}\|_{L^2}^2+\int_0^T\sigma^{m}\|\nabla\dot{u}\|_{L^2}^2dt\\
& \leq C\int_0^{\sigma(T)}m\sigma^{m-1}(\|\rho^{\frac{1}{2}}\dot{u}\|_{L^2}^2+\|\nabla u\|_{L^2}^2+\|\nabla u\|_{L^2}^4)dt-\int_{\partial\Omega}\sigma^{m}(u\cdot\nabla n\cdot u)Fds \\
&\quad+C\int_0^T\sigma^{m}\|\rho^{\frac{1}{2}}\dot{u}\|_{L^2}^2\|\nabla u\|_{L^2}^2dt+C\int_0^T\sigma^{m}(\|\nabla u\|_{L^2}^2+\|\nabla u\|_{L^2}^6+\|\nabla u\|_{L^4}^4)dt\\
&\quad+C\int_0^T\sigma^{m}\|\nabla u\|_{L^2}^4\|\rho^{\frac{1}{2}}\dot{u}\|_{L^2}^2dt\\
& \leq C\int_0^{\sigma(T)}m\sigma^{m-1}(\|\rho^{\frac{1}{2}}\dot{u}\|_{L^2}^2+\|\nabla u\|_{L^2}^2+\|\nabla u\|_{L^2}^4)dt+\frac{\sigma^{m}}{2}\|\rho^{\frac{1}{2}}\dot{u}\|_{L^2}^2\\
&\quad +C\int_0^T\sigma^{m}\|\rho^{\frac{1}{2}}\dot{u}\|_{L^2}^2\|\nabla u\|_{L^2}^2dt+C\int_0^T\sigma^{m}(\|\nabla u\|_{L^2}^2+\|\nabla u\|_{L^2}^6)dt\\
& \quad +C\int_0^T\sigma^{m}\|\nabla u\|_{L^4}^4dt+C\sigma^{m}(\|\nabla u\|_{L^2}^2+\|\nabla u\|_{L^2}^4)+C\int_0^T\sigma^{m}\|\nabla u\|_{L^2}^4\|\rho^{\frac{1}{2}}\dot{u}\|_{L^2}^2dt,
\ea\ee
where we have used the fact:
\be\la{Fnn1}\ba
&\left|\int_{\partial\Omega}(u\cdot\nabla n\cdot u)Fds\right|\\
&\leq C \int_{\partial\Omega}|u|^2|F|ds\\
& \leq C(\|\nabla F\|_{L^2}\|\nabla u\|_{L^2}^2
+\|F\|_{L^6}\|u\|_{L^3}\|\nabla u\|_{L^2}+\|F\|_{L^2}\|u\|_{L^4}^2)\\
&\leq\frac{1}{2}\|\rho^{\frac{1}{2}}\dot{u}\|_{L^2}^2+C(\|\nabla u\|_{L^2}^2+\|\nabla u\|_{L^2}^4).
\ea\ee
Therefore,
\be\la{ax7}\ba
&\sigma^{m}\|\rho^{\frac{1}{2}}\dot{u}\|_{L^2}^2+\int_0^T\sigma^{m}\|\nabla\dot{u}\|_{L^2}^2dt\\
& \leq C\int_0^{\sigma(T)}m\sigma^{m-1}(\|\rho^{\frac{1}{2}}\dot{u}\|_{L^2}^2+\|\nabla u\|_{L^2}^2+\|\nabla u\|_{L^2}^4)dt+C\int_0^T\sigma^{m}\|\nabla u\|_{L^4}^4dt\\
&\quad +C\int_0^T\sigma^{m}\|\rho^{\frac{1}{2}}\dot{u}\|_{L^2}^2\|\nabla u\|_{L^2}^2dt+C\int_0^T\sigma^{m}(\|\nabla u\|_{L^2}^2+\|\nabla u\|_{L^2}^6)dt\\
& \quad +C\sigma^{m}(\|\nabla u\|_{L^2}^2+\|\nabla u\|_{L^2}^4)+C\int_0^T\sigma^{m}\|\nabla u\|_{L^2}^4\|\rho^{\frac{1}{2}}\dot{u}\|_{L^2}^2dt,
\ea\ee

Now take $m=3$ in \eqref{ax7}, by \eqref{zz1} and Lemma \ref{le2}, we give \eqref{h15} and complete the proof of Lemma \ref{xcrle1}.
\end{proof}

\begin{lemma}\la{nzc1} If $(\n,u)$ is a smooth solution of \eqref{a1}-\eqref{ch1} satisfying \eqref{zz1}, then there
exists a positive constant  $\varepsilon_0$   depending only on $\mu,\,  \lambda,\,   \ga,\,  a,\,  \hat{\rho},\, \Omega$ and $M$ such
that
\be\la{xuv1} A_3(\sigma(T))+\int_0^{\sigma(T)}\|\rho^{1/2}\dot{u}\|^2dt\le 2K,\ee
provided $A_3(\sigma(T))\leq3K$ and $C_0\leq\varepsilon_0$.
\end{lemma}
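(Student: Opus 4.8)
Here is how I would attack Lemma~\ref{nzc1}.

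The plan is to run the energy estimate behind Lemma~\ref{xcrle1} with the trivial weight ($m=0$) on the short interval $[0,\sigma(T)]$ (on which $\sigma(t)=t\le1$), using the prescribed bound $\|\nabla u_0\|_{L^2}^2\le M$ from \eqref{dt2} where the $\sigma$-weighted argument would instead discard the initial datum. Concretely, the computation in the proof of Lemma~\ref{xcrle1} --- multiply $(\ref{a1})_2$ by $\sigma^m\dot u$ and integrate over $\Omega$ --- in fact establishes \eqref{I4} for every $m\ge0$; only the passage from \eqref{I4} to \eqref{I40}, where the initial-time term is thrown away, uses $m\ge1$. Taking $m=0$,
\be\la{pn1}\ba
&\left(\tfrac12\int(\lambda(\rho)+2\mu)(\div u)^2dx+\mu\int|\curl u|^2dx\right)_t+\int\rho|\dot u|^2dx\\
&\qquad\le\left(\int(P-\bar P)\div u\,dx\right)_t+C\|\nabla u\|_{L^2}^2+C\|\nabla u\|_{L^2}^4+C\|\nabla u\|_{L^3}^3,
\ea\ee
where, exactly as in Lemma~\ref{xcrle1}, the boundary term $\int_{\partial\Omega}(\lambda(\rho)+2\mu)\div u\,(u\cdot\nabla u\cdot n)\,ds$ has been controlled by rewriting it via \eqref{bdd1}, splitting $(\lambda(\rho)+2\mu)\div u=F+(P-\bar P)$, and applying the trace inequality together with \eqref{h19} and \eqref{hh20}.

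Integrating \eqref{pn1} over $[0,s]$ for $s\le\sigma(T)$ and taking the supremum in $s$, I would use Lemma~\ref{crle1} (so the left-hand energy dominates a constant multiple of $\|\nabla u\|_{L^2}^2$), the initial bounds $\tfrac12\int(\lambda(\rho_0)+2\mu)(\div u_0)^2+\mu\int|\curl u_0|^2\le C(\hat{\rho})M$ and $\big|\int(P(\rho_0)-\overline{P(\rho_0)})\div u_0\,dx\big|\le C(\hat{\rho})M^{1/2}$, the uniform bound $\|P-\bar P\|_{L^q}\le C$ for $q\in[2,6]$ (valid since $0\le\rho\le2\hat{\rho}$), and Young's inequality to absorb the value of $\int(P-\bar P)\div u\,dx$ at $t=s$ into the energy; this gives
\be\la{pn2}\ba
&A_3(\sigma(T))+\int_0^{\sigma(T)}\|\rho^{1/2}\dot u\|_{L^2}^2\,dt\\
&\qquad\le C(M+1)+C\int_0^{\sigma(T)}\big(\|\nabla u\|_{L^2}^2+\|\nabla u\|_{L^2}^4+\|\nabla u\|_{L^3}^3\big)\,dt.
\ea\ee
The remaining integral is then rendered harmless. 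By \eqref{bcgs1}, $\int_0^{\sigma(T)}\|\nabla u\|_{L^2}^2dt\le CC_0$, so, using $A_3(\sigma(T))\le3K$, also $\int_0^{\sigma(T)}\|\nabla u\|_{L^2}^4dt\le3K\int_0^{\sigma(T)}\|\nabla u\|_{L^2}^2dt\le CKC_0$; and from \eqref{h18} with $p=3$, the bound $\|\rho\dot u\|_{L^2}^2\le2\hat{\rho}\|\rho^{1/2}\dot u\|_{L^2}^2$, Young's inequality (with a sufficiently small parameter), the uniform bound on $\|P-\bar P\|_{L^q}$ and Lemma~\ref{th00}, one finds $\int_0^{\sigma(T)}\|\nabla u\|_{L^3}^3dt\le\tfrac12\int_0^{\sigma(T)}\|\rho^{1/2}\dot u\|_{L^2}^2dt+C(K^2+K^{1/2})C_0+CC_0^{1/2}$, the super-quadratic powers $\|\nabla u\|_{L^2}^6$, $\|\nabla u\|_{L^2}^3$ being absorbed exactly as $\|\nabla u\|_{L^2}^4$ above and the pressure powers through Lemma~\ref{th00}. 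Substituting into \eqref{pn2} and absorbing the $\|\rho^{1/2}\dot u\|_{L^2}^2$-term yields
\be\la{pn3}\ba
&A_3(\sigma(T))+\int_0^{\sigma(T)}\|\rho^{1/2}\dot u\|_{L^2}^2\,dt\\
&\qquad\le C_*(M+1)+C_*(1+K+K^{1/2}+K^2)(C_0+C_0^{1/2}),
\ea\ee
with $C_*$ depending only on $\mu,\lambda,\gamma,a,\hat{\rho},\Omega$. I then fix $K:=C_*(M+1)$, which depends only on $\mu,\lambda,\gamma,a,\hat{\rho},\Omega,M$, and choose $\varepsilon_0>0$ so small that $C_*(1+K+K^{1/2}+K^2)(\varepsilon_0+\varepsilon_0^{1/2})\le K$; for $C_0\le\varepsilon_0$ the right side of \eqref{pn3} is at most $2K$, which is \eqref{xuv1}.

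The one genuinely delicate ingredient is the boundary integral coming out of the $I_2$-type term, but it has already been dealt with in the proof of Lemma~\ref{xcrle1} and is merely reused here; everything else is bookkeeping. What makes the bootstrap close is the order of the two choices --- $K$ is fixed large (in terms of $M$) so that the initial-data contribution $C_*(M+1)$ does not exceed $K$, and only afterwards is $\varepsilon_0$ fixed small (in terms of $K$) --- together with the mechanism by which \eqref{bcgs1} and the a priori hypothesis $A_3(\sigma(T))\le3K$ turn each time-integral of a super-quadratic power of $\|\nabla u\|_{L^2}$, and (via \eqref{h18}) of $\|\nabla u\|_{L^3}^3$, into a quantity that is $O(C_0^{1/2})$.
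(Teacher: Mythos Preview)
Your proposal is correct and follows essentially the same route as the paper: take $m=0$ in the differential inequality \eqref{I4}, integrate on $[0,\sigma(T)]$ (keeping the initial-data contribution $C(M+1)$), bound $\int_0^{\sigma(T)}\|\nabla u\|_{L^3}^3dt$ via \eqref{h18} and absorb half of $\int_0^{\sigma(T)}\|\rho^{1/2}\dot u\|_{L^2}^2dt$, then close the bootstrap by first fixing $K$ in terms of $M$ and afterwards $\varepsilon_0$ in terms of $K$. The only cosmetic difference is that the paper writes the resulting inequality in the form $A_3(\sigma(T))\le K+CC_0[A_3(\sigma(T))]^2$ and then invokes $A_3(\sigma(T))\le3K$, whereas you insert $A_3(\sigma(T))\le3K$ earlier; the content is the same.
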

\begin{proof}
Choosing $m=0$ in \eqref{I40} and using \eqref{bcgs1} lead  to
\be\la{xinl2}\ba &A_3(\sigma(T))+\int_0^{\sigma(T)}\|\rho^{1/2}\dot{u}\|_{L^2}^2dt\\
& \leq CC_{0}+C\int_0^{\sigma(T)}\|\nabla u\|_{L^{2}}^{4}dt+C\int_0^{\sigma(T)}\|\nabla u\|_{L^{3}}^{3}dt\\
&\le C(C_0^{1/2}+M+KC_0)+C\int_0^{\sigma(T)}\|\nabla u\|_{L^3}^3dt,\ea\ee
By \eqref{h18} and \eqref{bcgs1}, one has
\bnn\ba
C\int_0^{\sigma(T)}\|\nabla u\|_{L^{3}}^{3}dt&\leq C\int_0^{\sigma(T)}\|\rho^{1/2}\dot{u}\|_{L^2}^{\frac{3}{2}}(\|\nabla u\|_{L^2}^{\frac{3}{2}}+\|P-\bar{P}\|_{L^2}^{\frac{3}{2}})dt\\
&\quad+C\int_0^{\sigma(T)}(\|\nabla u\|_{L^2}^{3}+\|P-\bar{P}\|_{L^2}^{3}+\|P-\bar{P}\|_{L^3}^{3})dt\\
&\le\frac{1}{2}\int_0^{\sigma(T)}\|\rho^{1/2}\dot{u}\|_{L^2}^{2}dt+C\int_0^{\sigma(T)}(\|\nabla u\|_{L^{2}}^{6}+\|P-\bar{P}\|_{L^{2}}^{6})dt\\
&\quad+C\int_0^{\sigma(T)}(\|\nabla u\|_{L^2}^{3}+\|P-\bar{P}\|_{L^2}^{3}+\|P-\bar{P}\|_{L^3}^{3})dt,
\ea\enn
which along with \eqref{xinl2}, yields that
\be\la{xinl3}\ba &A_3(\sigma(T))+\int_0^{\sigma(T)}\|\rho^{1/2}\dot{u}\|^2dt\\
&\le C(C_0+M+KC_0)+CC_0[A_3(\sigma(T))]^2+CC_0[A_3(\sigma(T))]^{\frac{1}{2}}\\
&\le K+CC_0[A_3(\sigma(T))]^2,
\ea\ee
for some suitably large positive $K$ depending only $\lambda,\,\,\mu,\,\,a,\,\,\hat{\rho}$ and $M$. Choosing $\varepsilon_0\triangleq(9CK)^{-1}$, one thus finishes the proof of \eqref{xuv1}.
\end{proof}
\begin{lemma}\la{le5} Let $(\n,u)$ be a smooth solution  of
   \eqref{a1}-\eqref{ch1}     on $\O \times (0,T] $ satisfying \eqref{zz1} and the initial data condition in \eqref{dt2}. Then there exists a positive constant $\varepsilon_1$ depending only  on $\mu,$  $\lambda,$  $\gamma,$ $a$, $\hat{\rho}$, $M$ and $\Omega$
 such that
  \be\la{h27}
  A_1(T)+A_2(T)\le C_0^{1/3},
  \ee
 provided $C_0\leq\varepsilon_1$.
   \end{lemma}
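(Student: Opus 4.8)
The plan is to close the induction by substituting the interpolation estimate \eqref{h18} of Lemma \ref{le3} into the two inequalities \eqref{h14} and \eqref{h15} already obtained in Lemma \ref{xcrle1}, and then absorbing the borderline terms using the running hypotheses \eqref{zz1} together with the improved bound $A_3(\sigma(T))\le 2K$ supplied by Lemma \ref{nzc1} (which forces us to take $\varepsilon_1\le\varepsilon_0$). The recurring device is to split $[0,T]$ at $t=\sigma(T)$: on $[0,\sigma(T)]$ one has $\sigma=t\le 1$ and $\|\nabla u\|_{L^2}^2\le 2K$, while on $[\sigma(T),T]$ one has $\sigma\equiv 1$ and $\|\nabla u\|_{L^2}^2\le A_1(T)$. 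Throughout, $\rho\le 2\hat\rho$ lets us replace $\|\rho\dot u\|_{L^2}$ by $\|\rho^{1/2}\dot u\|_{L^2}$, and $P=a\rho^\gamma$ being bounded gives $\|P-\bar P\|_{L^2}+\|P-\bar P\|_{L^3}+\|P-\bar P\|_{L^4}\le C(\hat\rho)$, so that $\|P-\bar P\|_{L^q}^q\le C(\hat\rho)\|P-\bar P\|_{L^2}^2$ for the relevant $q$, and Lemma \ref{th00} is then used to gain powers of $C_0$ from the pressure.

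First I would treat $A_1(T)$. Raising \eqref{h18} with $p=3$ (where both exponents are $1/2$) to the third power and applying Young's inequality yields
\[
\sigma\|\nabla u\|_{L^3}^3\le \tfrac14\,\sigma\|\rho^{1/2}\dot u\|_{L^2}^2+C\sigma\bigl(\|\nabla u\|_{L^2}^6+\|P-\bar P\|_{L^2}^6\bigr)+C\sigma\bigl(\|\nabla u\|_{L^2}^3+\|P-\bar P\|_{L^3}^3\bigr).
\]
Integrating in time, the first term is absorbed into $A_1(T)$; the pressure terms are bounded by $CC_0^{3/4}$ via Lemma \ref{th00}; and the $\|\nabla u\|_{L^2}$ terms are handled by the $\sigma(T)$-splitting together with \eqref{bcgs1}, the bound $A_3(\sigma(T))\le 2K$, and the hypothesis $A_1(T)\le 2C_0^{1/3}$, producing $C(K)\bigl(C_0+C_0^{7/6}+C_0^{5/3}\bigr)$. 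Feeding this into \eqref{h14} gives $A_1(T)\le CC_0^{1/2}+\tfrac14 A_1(T)+C(K)C_0^{3/4}$; since $A_1(T)\le 2C_0^{1/3}<\infty$ by \eqref{zz1}, the middle term is legitimately absorbed and we obtain $A_1(T)\le C(K)C_0^{1/2}$.

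Next I would treat $A_2(T)$ via \eqref{h15}. The contribution $CA_1(T)$ is already $\le C(K)C_0^{1/2}$. For $\int_0^T\sigma^3\|\nabla u\|_{L^4}^4\,dt$ I would use \eqref{h18} with $p=4$ (exponents $3/4$ and $1/4$), raise to the fourth power, and write the leading term as $\sigma^3\|\rho^{1/2}\dot u\|_{L^2}^3 X=\bigl(\sigma\|\rho^{1/2}\dot u\|_{L^2}^2\bigr)\,\sigma^{1/2}\bigl(\sigma^3\|\rho^{1/2}\dot u\|_{L^2}^2\bigr)^{1/2}X$ with $X=\|\nabla u\|_{L^2}+\|P-\bar P\|_{L^2}$; here $\sigma^{1/2}\bigl(\sigma^3\|\rho^{1/2}\dot u\|_{L^2}^2\bigr)^{1/2}\le A_2(T)^{1/2}\le CC_0^{1/6}$ and $X\le C(K)$ (using $A_3(\sigma(T))\le 2K$, $A_1(T)\le C(K)C_0^{1/2}$, and $\|P-\bar P\|_{L^2}\le C(\hat\rho)$), while the remaining integral $\int_0^T\sigma\|\rho^{1/2}\dot u\|_{L^2}^2\,dt\le A_1(T)\le C(K)C_0^{1/2}$; the lower-order term $\int_0^T\sigma^3(X+\|P-\bar P\|_{L^4})^4\,dt$ is controlled using $\sigma^3\le\sigma$, $\sup_t\sigma\|\nabla u\|_{L^2}^2\le A_1(T)$, \eqref{bcgs1} and Lemma \ref{th00}. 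This gives $A_2(T)\le C(K)C_0^{1/2}$ as well, hence $A_1(T)+A_2(T)\le C(K)C_0^{1/2}$ with $C(K)$ depending only on $\mu,\lambda,\gamma,a,\hat\rho,\Omega,M$; choosing $\varepsilon_1\le\min\{\varepsilon_0,1,C(K)^{-6}\}$ then forces $C(K)C_0^{1/2}\le C_0^{1/3}$ whenever $C_0\le\varepsilon_1$, which is \eqref{h27}.

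The genuinely delicate point is not any single estimate but the bookkeeping: every error term must carry a power of $C_0$ strictly above $1/3$ (here $1/2$, $2/3$, $3/4$, $7/6$, $5/3$) so that the smallness of $C_0$ defeats the $M$-dependent constants, and this is exactly what makes the bootstrap self-improving (the crude input $A_1+A_2\le 2C_0^{1/3}$ is upgraded to $A_1+A_2\le C_0^{1/3}$). The boundary integrals — which are the real novelty of this setting — do not appear here, having already been absorbed upstream in Lemma \ref{xcrle1} through the identity \eqref{bdd1} and the effective-viscous-flux estimate \eqref{h19}.
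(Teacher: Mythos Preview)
Your proposal is correct and follows essentially the same route as the paper: feed the interpolation estimate \eqref{h18} into \eqref{h14}--\eqref{h15}, split $[0,T]$ at $\sigma(T)$, invoke Lemma \ref{nzc1} on the short interval and \eqref{zz1} plus Lemma \ref{th00} on the long one, and conclude by choosing $\varepsilon_1\le\min\{\varepsilon_0,\,C(K)^{-6}\}$. The only organizational difference is that you first close $A_1$ by Young's inequality and absorption, then use that improved bound for $A_2$, whereas the paper combines the two, estimates $\int_0^T\sigma^3\|\nabla u\|_{L^4}^4\le CC_0^{2/3}$ directly from \eqref{zz1}, and controls $\int_{\sigma(T)}^T\sigma\|\nabla u\|_{L^3}^3$ via $\|\nabla u\|_{L^3}^3\le \|\nabla u\|_{L^4}^4+\|\nabla u\|_{L^2}^2$; the ingredients and the final inequality $A_1+A_2\le C(\hat\rho,M)C_0^{1/2}$ are the same.
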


\begin{proof}
Lemma \ref{xcrle1} shows that
\be\la{h9910} \ba
A_1(T)+A_2(T)\leq CC_0^{1/2}+C\left(\int_0^{T}\sigma^3 \|\na u\|_{L^4}^4 dt+\int_0^T\sigma\|\nabla u\|_{L^{3}}^{3}dt\right)   .
\ea \ee
By (\ref{h18}), \eqref{zz1} and Lemmas \ref{le2}, \ref{th00}, one can check that
  \be\la{h99} \ba
  &  \int_0^{T}\sigma^3 \|\na u\|_{L^4}^4 dt\\
& \le  C \int_0^{T}\sigma^{3} \|\n^{1/2}  \dot u \|_{L^2}^3(\|\nabla u\|_{L^{2}}+\|P-\bar{P}\|_{L^{2}})dt\\
& \quad + C\int_{0}^{T}\sigma^{3}(\|\nabla u\|_{L^{2}}^{4}+\|P-\bar{P}\|_{L^{2}}^{4}+\|P-\bar{P}\|_{L^{4}}^{4})dt \\
& \le C\int_{0}^{T}(\sigma\|\rho^{1/2}\dot{u}\|_{L^{2}}^{2})(\sigma^{3}\|\rho^{1/2}\dot{u}\|_{L^{2}}^{2})^{1/2}(\sigma\|\nabla u\|_{L^{2}}^{2})^{1/2}dt\\
&\quad+C\int_{0}^{T}(\sigma\|\rho^{1/2}\dot{u}\|_{L^{2}}^{2})(\sigma^{3}\|\rho^{1/2}\dot{u}\|_{L^{2}}^{2})^{1/2}(\sigma^{1/2}\|P-\bar{P}\|_{L^{2}})dt \\
& \quad +C\left(\int_{0}^{T}(\sigma\|\nabla u\|_{L^{2}}^{2})\|\nabla u\|_{L^{2}}^{2}dt+\int_0^T\sigma^{3}\|P-\bar{P}\|_{L^{2}}^{2}dt\right) \\
& \le C\left[(A_1^{1/2}(T)+C_0^{\frac{1}{4}})A_2^{1/2}(T)A_1(T)+C_0^{3/4}\right] \\
& \le CC_0^{2/3},
 \ea \ee
which, along with \eqref{h9910} gives
\be\la{h991} \ba
A_1(T)+A_2(T)\leq CC_0^{1/2}+C\int_0^T\sigma\|\nabla u\|_{L^{3}}^{3}dt.
\ea \ee
So it reduces to estimate $\int_0^T\sigma\|\nabla u\|_{L^{3}}^{3}dt .$

By (\ref{h18}), \eqref{zz1} and Lemmas \ref{le2}, \ref{th00}, we get
\be\la{h992} \ba
&\int_0^{\sigma(T)}\sigma\|\nabla u\|_{L^{3}}^{3}dt \\
& \le C\int_0^{\sigma(T)}\sigma\|\rho^{1/2}\dot{u}\|_{L^{2}}^{3/2}(\|\nabla u\|_{L^{2}}^{3/2}+\|P-\bar{P}\|_{L^{2}}^{3/2})dt\\
& \quad +C\int_0^{\sigma(T)}\sigma(\|\nabla u\|_{L^{2}}^{3}+\|P-\bar{P}\|_{L^{2}}^{3}+\|P-\bar{P}\|_{L^{3}}^{3})dt\\
& \le C\int_0^{\sigma(T)}\sigma\|\rho^{1/2}\dot{u}\|_{L^{2}}^{3/2}\|\nabla u\|_{L^{2}}^{3/2}dt+C\int_0^{\sigma(T)}\sigma\|\rho^{1/2}\dot{u}\|_{L^{2}}^{3/2}\|P-\bar{P}\|_{L^{2}}^{3/2}dt+C(\hat{\rho})C_0^{3/4}\\
& \le C\sup_{   0\le t\le T  }(\sigma\|\nabla u\|_{L^{2}}^{2})^{1/4}\|\nabla u\|_{L^{2}}^{1/2}\int_0^{\sigma(T)}\|\nabla u\|_{L^{2}}^{1/2}(\sigma\|\rho^{1/2}\dot{u}\|_{L^{2}}^{2})^{3/4}dt\\
& \quad + C\left(\int_0^{\sigma(T)}\sigma\|\rho^{1/2}\dot{u}\|_{L^{2}}^{2}dt\right)^{3/4}\left(\int_0^{\sigma(T)}\sigma\|P-\bar{P}\|_{L^{2}}^{6}dt\right)^{1/4}+C(\hat{\rho})C_0^{3/4}\\
& \le C(\hat{\rho},M)C_0^{1/2},
\ea \ee
provided $C_0\leq\varepsilon_0$.

On the other hand, by \eqref{h99} and \eqref{bcgs1},
\be\la{h993} \ba
&\int_{\sigma(T)}^T\sigma\|\nabla u\|_{L^{3}}^{3}dt \\
& \le \int_{\sigma(T)}^T\sigma\|\nabla u\|_{L^{4}}^{4}dt+\int_{\sigma(T)}^T\sigma\|\nabla u\|_{L^{2}}^{2}dt\\
& \le CC_0^{2/3}.
\ea \ee

Set $\varepsilon_1\triangleq\min\{\varepsilon_0,(C(\hat{\rho}, M))^{-6}\}$, then for $C_0\leq\varepsilon_1,$ $C(\hat{\rho},M)C_0^{1/2}\leq C_0^{1/3}$. Together with \eqref{h992} and \eqref{h993}, it follows from \eqref{h991} that \eqref{h27} holds when $C_0\leq\varepsilon_1$.
\end{proof}
\begin{lemma}\la{le20} Let $(\n,u)$ be a smooth solution  of
   \eqref{a1}-\eqref{ch1}     on $\O \times (0,T] $ satisfying \eqref{zz1} and the initial data condition \eqref{dt2}. Then there exists a positive constant $C$ depending only  on $\mu,$  $\lambda,$  $\gamma,$ $a,$ $\hat{\rho}$, $M$, and $\Omega$
 such that
\be\la{xinl4}
\sup_{  0\le t\le T   }\|\nabla u\|_{L^2}^2+\int_0^{T}\|\rho^{1/2}\dot{u}\|_{L^{2}}^{2}dt\le C(\hat{\rho},M),
\ee
\be\la{xinl5}
\sup_{  0\le t\le T   }\sigma\|\rho^{1/2}\dot{u}\|_{L^{2}}^{2}+\int_0^{T}\sigma\|\nabla\dot{u}\|_{L^{2}}^{2}dt\le C(\hat{\rho},M),
\ee
 provided $C_0\leq\varepsilon_1$.
\end{lemma}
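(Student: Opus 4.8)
The plan is to read \eqref{xinl4} off directly from Lemmas \ref{nzc1} and \ref{le5}, and then to obtain \eqref{xinl5} by specializing the differential estimate \eqref{ax7} to the weight $m=1$ and closing an absorption inequality for $X\triangleq\sup_{0\le t\le T}\sigma\|\rho^{1/2}\dot u\|_{L^2}^2$. The recurring device throughout will be to split $[0,T]$ into $[0,\sigma(T)]$ and $[\sigma(T),T]$: on the second interval $\sigma\equiv1$, so there $\sigma$-weighted integrals coincide with $\sigma^3$-weighted ones and are controlled by \eqref{h99} or by $A_1(T),A_2(T)$; on the first interval $0\le\sigma\le1$, which lets one trade or drop powers of $\sigma$ at will.

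For \eqref{xinl4} I would argue at once. On $[0,\sigma(T)]$, Lemma \ref{nzc1} gives $\|\nabla u\|_{L^2}^2\le A_3(\sigma(T))\le 2K$ and $\int_0^{\sigma(T)}\|\rho^{1/2}\dot u\|_{L^2}^2\,dt\le 2K$. On $[\sigma(T),T]$, using $\sigma\equiv1$ and $A_1(T)\le C_0^{1/3}$ from Lemma \ref{le5}, one has $\|\nabla u\|_{L^2}^2=\sigma\|\nabla u\|_{L^2}^2\le A_1(T)$ and $\int_{\sigma(T)}^T\|\rho^{1/2}\dot u\|_{L^2}^2\,dt=\int_{\sigma(T)}^T\sigma\|\rho^{1/2}\dot u\|_{L^2}^2\,dt\le A_1(T)$. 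Summing the two pieces yields \eqref{xinl4}.

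For \eqref{xinl5} I would take $m=1$ in \eqref{ax7}. Using the just-proved \eqref{xinl4}, together with \eqref{bcgs1}, Lemma \ref{th00}, $\int_0^{\sigma(T)}\|\rho^{1/2}\dot u\|_{L^2}^2\,dt\le 2K$, $A_1(T)\le C_0^{1/3}$ and the uniform bound on $P$ coming from $\rho\le 2\hat\rho$, every term on the right of \eqref{ax7} is bounded by $C(\hat\rho,M)$ except $C\int_0^T\sigma\|\nabla u\|_{L^4}^4\,dt$. For this last term, on $[\sigma(T),T]$ one has $\sigma\equiv1=\sigma^3$, so $\int_{\sigma(T)}^T\sigma\|\nabla u\|_{L^4}^4\,dt\le\int_0^T\sigma^3\|\nabla u\|_{L^4}^4\,dt\le CC_0^{2/3}$, arguing as in \eqref{h99} together with Lemma \ref{le5}; on $[0,\sigma(T)]$, applying \eqref{h18} with $p=4$, the bound $\|\rho\dot u\|_{L^2}\le C(\hat\rho)\|\rho^{1/2}\dot u\|_{L^2}$, and once more \eqref{xinl4}, \eqref{bcgs1} and Lemma \ref{th00}, one reduces everything to estimating $\int_0^{\sigma(T)}\sigma\|\rho^{1/2}\dot u\|_{L^2}^3\,dt$.

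I expect this term to be the main obstacle: it is the $\sigma^1$-weighted analogue of a quantity that in Lemma \ref{xcrle1} was only needed with the gentler $\sigma^3$-weight, and it is not directly controlled by $A_1(T),A_2(T),A_3(T)$ with a matching power of $\sigma$. I would handle it by exploiting $0\le\sigma\le 1$ on $[0,\sigma(T)]$: writing $\sigma\|\rho^{1/2}\dot u\|_{L^2}^3=(\sigma\|\rho^{1/2}\dot u\|_{L^2}^2)^{1/2}\,\sigma^{1/2}\|\rho^{1/2}\dot u\|_{L^2}^2\le X^{1/2}\|\rho^{1/2}\dot u\|_{L^2}^2$, and invoking $\int_0^{\sigma(T)}\|\rho^{1/2}\dot u\|_{L^2}^2\,dt\le 2K$ from Lemma \ref{nzc1}, one gets $\int_0^{\sigma(T)}\sigma\|\rho^{1/2}\dot u\|_{L^2}^3\,dt\le 2K\,X^{1/2}$. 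Substituting all of this back into \eqref{ax7} with $m=1$ and taking the supremum over $t\in[0,T]$ gives $X+\int_0^T\sigma\|\nabla\dot u\|_{L^2}^2\,dt\le C(\hat\rho,M)+C(\hat\rho,M)X^{1/2}$, after which Young's inequality absorbs $C(\hat\rho,M)X^{1/2}\le\frac12 X+C(\hat\rho,M)$ into the left-hand side, yielding $X\le C(\hat\rho,M)$ and hence both bounds in \eqref{xinl5}. Smallness of $C_0$ enters only through the cited conclusions of Lemmas \ref{nzc1} and \ref{le5} and estimate \eqref{h99}.
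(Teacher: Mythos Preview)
Your proposal is correct and follows essentially the same route as the paper: \eqref{xinl4} is read off from Lemmas \ref{nzc1} and \ref{le5}, and \eqref{xinl5} comes from \eqref{ax7} with $m=1$, where all terms are bounded via \eqref{xinl4} except $\int_0^T\sigma\|\nabla u\|_{L^4}^4\,dt$, which is handled through \eqref{h18} and the absorption inequality $X\le C+CX^{1/2}$. The only cosmetic difference is that the paper does not split $[0,T]$ for the $L^4$ term but instead writes, over the whole interval, $\sigma\|\rho^{1/2}\dot u\|_{L^2}^3(\|\nabla u\|_{L^2}+\|P-\bar P\|_{L^2})=(\sigma^{1/2}\|\nabla u\|_{L^2})(\sigma^{1/2}\|\rho^{1/2}\dot u\|_{L^2})\|\rho^{1/2}\dot u\|_{L^2}^2$ (and similarly for the $P$ factor), pulling out $\sup_t\sigma^{1/2}\|\rho^{1/2}\dot u\|_{L^2}=X^{1/2}$ and using $\int_0^T\|\rho^{1/2}\dot u\|_{L^2}^2\,dt\le C$ from \eqref{xinl4}; your explicit splitting with \eqref{h99} on $[\sigma(T),T]$ achieves the same end.
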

\begin{proof}
\eqref{xinl4} is a direct consequence of \eqref{xuv1} and \eqref{h27}.

Next, we prove \eqref{xinl5}. Choosing $m=1$ in \eqref{ax7}, by \eqref{xinl4}, \eqref{zz1}, \eqref{bcgs1} and \eqref{xuv1}, one has
\be\la{xinl6}\ba
&\sup_{  0\le t\le T   }\sigma\|\rho^{1/2}\dot{u}\|_{L^{2}}^{2}+\int_0^{T}\sigma\|\nabla\dot{u}\|_{L^{2}}^{2}dt\\
& \leq C\int_0^{\sigma(T)}(\|\rho^{\frac{1}{2}}\dot{u}\|_{L^2}^2+\|\nabla u\|_{L^2}^2+\|\nabla u\|_{L^2}^4)dt+C\int_0^{T}\sigma\|\nabla u\|_{L^4}^4dt \\
&\quad+C\int_0^{T}\sigma\|\rho^{\frac{1}{2}}\dot{u}\|_{L^2}^2\|\nabla u\|_{L^2}^2dt+C\int_0^{T}\sigma(\|\nabla u\|_{L^2}^2+\|\nabla u\|_{L^2}^6)dt\\
&\quad+C\sigma(\|\nabla u\|_{L^2}^2+\|\nabla u\|_{L^2}^4)+C\int_0^{T}\sigma\|\nabla u\|_{L^2}^4\|\rho^{\frac{1}{2}}\dot{u}\|_{L^2}^2dt\\
&\le C(\hat{\rho},M)+C\int_0^{T}\sigma\|\nabla u\|_{L^4}^4dt\\
&\le C(\hat{\rho},M)+C\int_0^{T}\sigma\|\rho^{1/2}\dot{u}\|_{L^2}^3(\|\nabla u\|_{L^2}+\|P-\bar{P}\|_{L^{2}})dt+C\int_0^{T}\sigma\|\nabla u\|_{L^2}^4dt\\
&\quad+C\int_0^{T}\sigma(\|P-\bar{P}\|_{L^{2}}^4+\|P-\bar{P}\|_{L^{4}}^4)dt\\
&\le C(\hat{\rho},M)+C\int_0^T(\sigma^{1/2}\|\nabla u\|_{L^2})(\sigma^{1/2}\|\rho^{1/2}\dot{u}\|_{L^2})\|\rho^{1/2}\dot{u}\|_{L^2}^2dt\\
&\le C(\hat{\rho},M)+C\sup_{  0\le t\le T   }\sigma^{1/2}\|\rho^{1/2}\dot{u}\|_{L^2},
\ea\ee
then \eqref{xinl5} follows from \eqref{xinl6} and Young's inequality.
\end{proof}

Now we can prove the density have a uniform (in time) upper bound, which is the key to obtain all the higher
order estimates and thus to extend the classical solution globally.
We will adopt an approach motivated by the work of \cite{hlx1,lx}.

\begin{lemma}\la{le7}
There exists a positive constant
   $\ve$
    depending    on  $\mu$,  $\lambda$, $\ga$, $a$, $\hat{\rho}$, $\Omega,$ and $M$  such that,
    if  $(\n,u)$ is a smooth solution  of
   \eqref{a1}-\eqref{ch1}     on $\O \times (0,T] $
   satisfying \eqref{zz1} and the initial data condition \eqref{dt2}, then
      \be\la{lv102}\sup_{0\le t\le T}\|\n(t)\|_{L^\infty}  \le
\frac{7\hat \rho }{4}  ,\ee
      provided $C_0\le \ve.$ Moreover, if $C_0\le \ve$, then there exists some positive constant $\tilde{C}(T)$ depending only on $T,$ $\mu,$ $\lambda,$ $\ga,$  $a,$ $\hat\rho,$  $s,$ $\Omega$, $M$ and $\vartheta$ such that for $(x,t)\in\Omega\times(0,T)$
\be\la{le7xz}\rho(x,t)\geq\tilde{C}(T)\inf_{x\in\Omega}\rho_0(x).\ee
\end{lemma}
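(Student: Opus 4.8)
The plan is to obtain the uniform upper bound for $\rho$ via the Zlotnik-type argument of Lemma \ref{le1}, following the approach in \cite{hlx1,lx}. First I would use the effective viscous flux $F$ to rewrite the continuity equation along particle trajectories. From $(\ref{a1})_1$ and the definition $F=(\lambda(\rho)+2\mu)\div u-(P-\bar P)$, one has along the flow map $X(t)$ of $u$ the ODE
\be\la{planA}\ba
\frac{d}{dt}\rho(X(t),t)=-\rho\,\div u=-\frac{\rho}{\lambda(\rho)+2\mu}\bigl(F+P-\bar P\bigr).
\ea\ee
Since $P(\rho)=a\rho^\gamma$ and $\lambda(\rho)=b\rho^\beta$ with $\beta\in[1,\gamma+1]$, the term $-\frac{\rho P}{\lambda(\rho)+2\mu}$ acts as a strong damping $g(\rho)$ with $g(\infty)=-\infty$ (this uses $\gamma>1$), while $-\frac{\rho(F-\bar P)}{\lambda(\rho)+2\mu}$ is treated as $b'(t)$. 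To apply Lemma \ref{le1} I must verify \eqref{a100}, i.e. $\int_{t_1}^{t_2}|b'(t)|\,dt\le N_0+N_1(t_2-t_1)$ with $N_0$ small (of order a power of $C_0$) and $N_1$ a fixed constant; this is where the time splitting at $\sigma(T)$ enters.

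Next I would estimate $b(t_2)-b(t_1)$ separately on $[0,\sigma(T)]$ and on $[\sigma(T),T]$. On the small-time interval, using $\|\rho\|_{L^\infty}\le2\hat\rho$, \eqref{hh20} and Lemma \ref{le3}, one bounds $\int_0^{\sigma(T)}\|F\|_{L^\infty}\,dt$ by interpolating $\|F\|_{L^\infty}\le C\|F\|_{L^6}^{1/2}\|\nabla F\|_{L^6}^{1/2}+C\|F\|_{L^6}\le C(\|\rho^{1/2}\dot u\|_{L^2}+\ldots)$ type quantities, then invoking \eqref{As1}, \eqref{As2}, \eqref{h27} and Lemma \ref{th00}; the factor $\sigma^{m}$ in $A_1,A_2$ is tuned so that $\int_0^{\sigma(T)}\|F\|_{L^\infty}dt\le CC_0^{\eta}$ for some $\eta>0$, giving the small $N_0$. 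On the large-time interval $[\sigma(T),T]$ one has $\sigma\equiv1$, so the full strength of $A_2(T)\le C_0^{1/3}$ and the bound \eqref{xinl5} are available; here $\int_{t_1}^{t_2}\|F\|_{L^\infty}dt\le \varepsilon_0(t_2-t_1)^{1/2}\bigl(\int\|F\|_{L^\infty}^2\bigr)^{1/2}$ and one controls $\int\|F\|_{L^\infty}^2$ by $\int(\|\rho^{1/2}\dot u\|_{L^2}^2+\|\nabla u\|_{L^2}^2+\|P-\bar P\|_{L^2}^2)\,dt\le C$, which after choosing $C_0$ small yields $N_1$ as small as we wish (in particular smaller than the dissipation threshold $\zeta_0$ determined by $g$). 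Combining, Lemma \ref{le1} gives $\rho\le\max\{\hat\rho,\zeta_0\}+N_0\le 7\hat\rho/4$ once $C_0\le\varepsilon$.

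For the lower bound \eqref{le7xz} I would integrate \eqref{planA} the other way: along trajectories,
\be\la{planB}\ba
\rho(X(t),t)=\rho_0(X(0))\exp\left(-\int_0^t\frac{F+P-\bar P}{\lambda(\rho)+2\mu}\,ds\right),
\ea\ee
and the exponent is bounded below by $-\int_0^T(\|F\|_{L^\infty}+C(\hat\rho))\,dt\ge -\tilde C(T)$, using the already-established $\int_0^T\|F\|_{L^\infty}dt\le \tilde C(T)$ (finite, depending on $T$ through the large-time part) together with $\|P-\bar P\|_{L^\infty}\le C(\hat\rho)$. Hence $\rho(x,t)\ge \tilde C(T)\inf_\Omega\rho_0$.

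The main obstacle I anticipate is the estimate of $\int_0^{\sigma(T)}\|F\|_{L^\infty}\,dt$ near $t=0$, where $\nabla u$ need not be small and the density-dependent viscosity $\lambda(\rho)=b\rho^\beta$ makes $F$ genuinely nonlinear in $\rho$: one must carefully exploit the weighted estimates $A_1,A_2$ (the powers of $\sigma$ are chosen precisely for this) and the pressure bound of Lemma \ref{th00} to extract a positive power of $C_0$, rather than just a bounded quantity, so that $N_0$ can be made small; the $\beta\in[1,\gamma+1]$ restriction is used here to keep $\rho^\beta/(\lambda(\rho)+2\mu)$ and related ratios bounded on $\{0\le\rho\le 2\hat\rho\}$.
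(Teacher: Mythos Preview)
Your overall strategy---rewriting the continuity equation along trajectories as in \eqref{planA}, splitting $[0,T]$ at $\sigma(T)$, and invoking the Zlotnik lemma---matches the paper, and your argument for the lower bound \eqref{le7xz} via \eqref{planB} is essentially the paper's (there one writes $D_t\rho^{-1}\le C\rho^{-1}(|F|+1)$ and applies Gronwall).

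The gap is in your large-time estimate on $[\sigma(T),T]$. First, the Cauchy--Schwarz step
\[
\int_{t_1}^{t_2}\|F\|_{L^\infty}\,dt\le (t_2-t_1)^{1/2}\Bigl(\int_{t_1}^{t_2}\|F\|_{L^\infty}^2\,dt\Bigr)^{1/2}
\]
does not yield the affine structure $N_0+N_1(t_2-t_1)$ required by Lemma~\ref{le1}. Second, $\|F\|_{L^\infty}$ is \emph{not} controlled by $\|\rho^{1/2}\dot u\|_{L^2}$: by \eqref{g2} and \eqref{h19} one needs $\|\nabla F\|_{L^6}\le C\|\rho\dot u\|_{L^6}\le C(\|\nabla\dot u\|_{L^2}+\|\nabla u\|_{L^2}^2)$, and the resulting exponent $3/2$ on $\|\nabla F\|_{L^6}$ in $\|F\|_{L^\infty}^2$ forces a H\"older step that introduces a $T$-dependent factor, so $\int\|F\|_{L^\infty}^2\,dt$ is not uniformly bounded in $T$. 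Third, in Zlotnik's lemma it is $N_0$ that must be small (of order $C_0^\alpha$), while $N_1$ is a \emph{fixed} constant chosen so that $g(\zeta)\le -N_1$ for $\zeta\ge\zeta_0$; your claim that ``$N_1$ can be made as small as we wish'' inverts this.

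The paper's remedy is to apply Young's inequality to $\|F\|_{L^\infty}$ itself, with the exponent tuned so that the remainder is integrable uniformly in $T$:
\[
|b(t_2)-b(t_1)|\le N_1(t_2-t_1)+C\int_{\sigma(T)}^{T}\|F\|_{L^\infty}^{8/3}\,dt,
\qquad N_1=\frac{a\hat\rho^{\gamma+1}}{2(\lambda(3\hat\rho/2)+2\mu)}.
\]
The exponent $8/3$ is precisely what makes the Gagliardo--Nirenberg interpolation land on $\|F\|_{L^\infty}^{8/3}\le C\|F\|_{L^2}^{2/3}\|\nabla F\|_{L^6}^{2}+C\|F\|_{L^2}^{8/3}$, so that $\|\nabla F\|_{L^6}^2\sim\|\nabla\dot u\|_{L^2}^2$ appears to the first power and $\int_{\sigma(T)}^T\|\nabla\dot u\|_{L^2}^2\,dt\le A_2(T)\le C_0^{1/3}$ gives $N_0\le CC_0^{4/9}$ independently of $T$. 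One then checks $g(\zeta)\le -N_1$ for $\zeta\ge\zeta_0=3\hat\rho/2$ and Lemma~\ref{le1} closes. Replace your Cauchy--Schwarz step by this Young-plus-$8/3$ trick and the argument goes through.
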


\begin{proof}
  First, $(\ref{a1})_1$ can be rewritten as
 \be \la{z.3} D_t \n=g(\n)+b'(t), \ee where \bnn
D_t\n\triangleq\n_t+u \cdot\nabla \n ,\quad
g(\rho)\triangleq-\frac{\rho(P-\bar{P})}{\lambda(\rho)+2\mu},
\quad b(t)\triangleq- \int_0^t\frac{1}{\lambda(\rho)+2\mu}\rho Fdt. \enn

For $t\in[0,\sigma(T)],$ one deduces from \eqref{g2}, \eqref{h19}, \eqref{hh20}, \eqref{tb90}, \eqref{zz1} and Lemmas \ref{le2}, \ref{th00} that for all $0\leq t_1\leq t_2\leq\sigma(T)$,
\be \la{xbh19} \ba
&|b(t_2)-b(t_1)|\\
&\le C\int_0^{\sigma(T)}\|(\rho F)(\cdot,t)\|_{L^{\infty}}dt\\
& \le C(\hat{\rho})\int_0^{\sigma(T)}\|F\|_{L^{2}}^{1/4}\|\nabla F\|_{L^{6}}^{3/4}dt+C\int_0^{\sigma(T)}\|F\|_{L^{2}}dt\\
& \le C(\hat{\rho})\int_0^{\sigma(T)}(\|\nabla u\|_{L^2}^{1/4}+\|P-\bar{P}\|_{L^2}^{1/4})(\|\nabla\dot{u}\|_{L^2}^{3/4}+\|\nabla u\|_{L^2}^{3/2})dt+CC_0^{1/4}\\
& \le C(\hat{\rho})\int_0^{\sigma(T)}\left(\sigma^{-1/2}(\sigma^{1/2}\|\nabla u\|_{L^2})^{1/4}+(\sigma\|P-\bar{P}\|_{L^2}^2)^{1/8}\sigma^{-1/2}\right)(\sigma\|\nabla\dot{u}\|_{L^2}^{2})^{3/8}dt\\
&\quad+C\int_0^{\sigma(T)}\|\nabla u\|_{L^2}^{7/4}dt+C\int_0^{\sigma(T)}\|P-\bar{P}\|_{L^2}^{1/4}\|\nabla u\|_{L^2}^{3/2}dt+CC_0^{1/4}\\
& \le C(\hat{\rho})C_0^{1/24}\int_0^{\sigma(T)}\sigma^{-1/2}(\sigma\|\nabla\dot{u}\|_{L^2}^{2})^{3/8}dt+C\left(\int_0^{\sigma(T)}\|\nabla u\|_{L^2}^2dt\right)^{7/8}+CC_0^{1/16}\\
& \le C(\hat{\rho})C_0^{1/24}\left(\int_0^1\sigma^{-4/5}dt\right)^{5/8}\left(\int_0^{\sigma(T)}\sigma\|\nabla\dot{u}\|_{L^2}^{2}dt\right)^{3/8}+CC_0^{1/16}\\
& \le C(\hat{\rho}, M)C_0^{1/24},
\ea\ee
provide $C_0\leq\varepsilon_1$.

Combining \eqref{xbh19} with \eqref{z.3} and choosing $N_1=0$, $N_0=C(\hat{\rho}, M)C_0^{1/24}$, $\zeta_0=\hat{\rho}$
in Lemma  \ref{le1} give
   \be\la{a103}\sup_{t\in
[0,\si(T)]}\|\n\|_{L^\infty} \le \hat{\rho}
+C(\hat{\rho}, M)C_0^{1/24}\le\frac{3 \hat{\rho}  }{2},\ee
 provided $C_0\leq\varepsilon_2\triangleq\min\left\{\varepsilon_1, \left(\frac{\hat{\rho}}{2C(\hat{\rho}, M)}\right)^{24}\right\}. $

On the other hand, for $t\in[\sigma(T),T],\,\,\sigma(T)\le t_1\le t_2\le T ,$ it follows from \eqref{h19}, \eqref{tb90}, \eqref{zz1} and Lemmas \ref{le2}, \ref{th00} that
\be \la{xbh20} \ba
&|b(t_2)-b(t_1)|  \\
&\le C(\hat{\rho})\int_{t_1}^{t_2}\|F\|_{L^{\infty}}dt \\
&\le \frac{a\hat{\rho}^{\gamma+1}}{2(\lambda(\frac{3\hat{\rho}}{2})+2\mu)}(t_2-t_1)+C\int_{t_1}^{t_2}\|F\|_{L^{\infty}}^{8/3}dt \\
&\le \frac{a\hat{\rho}^{\gamma+1}}{2(\lambda(\frac{3\hat{\rho}}{2})+2\mu)}(t_2-t_1)+C\int_{t_1}^{t_2}(\|F\|_{L^2}^{2/3}\|\nabla F\|_{L^6}^{2}+\|F\|_{L^2}^{8/3})dt \\
& \le \frac{a\hat{\rho}^{\gamma+1}}{2(\lambda(\frac{3\hat{\rho}}{2})+2\mu)}(t_2-t_1)+CC_0^{1/9}\int_{\sigma(T)}^{T}(\|\nabla \dot{u}\|_{L^2}^{2}+\|\nabla u\|_{L^2}^{4})dt+CC_0^{4/9} \\
& \le \frac{a\hat{\rho}^{\gamma+1}}{2(\lambda(\frac{3\hat{\rho}}{2})+2\mu)}(t_2-t_1)+CC_0^{4/9}.
\ea\ee

Now we choose $N_0=CC_0^{4/9}$, $N_1=\frac{a\hat{\rho}^{\gamma+1}}{2(\lambda(\frac{3\hat{\rho}}{2})+2\mu)}$ in \eqref{a100} and set $\zeta_0= \frac{3\hat{\rho}}{2}$ in (\ref{a101}). Since for all $  \zeta \geq\zeta_0= \frac{3\hat{\rho}}{2}$,
$$  g(\zeta)=-\frac{ a\zeta^{\gamma+1}-\zeta\bar{P}}{\lambda(\zeta)+2\mu}\le -\frac{a\hat{\rho}^{\gamma+1}}{2(\lambda(\frac{3\hat{\rho}}{2})+2\mu)}= -N_1. $$
Together with \eqref{z.3} and \eqref{xbh20}, by Lemma \ref{le1}, we have
\be\la{a102} \sup_{t\in
[\si(T),T]}\|\n\|_{L^\infty}\le\frac{3\hat{\rho}}{2}+N_0\le \frac{3\hat{\rho}}{2}+CC_0^{4/9} \le
\frac{7\hat \n }{4},\ee provided
\be \la{xbh21} \ba C_0\leq\varepsilon\triangleq\min\{\varepsilon_2, (\frac{\hat\rho}{4C})^{9/4} \}.
\ea\ee
The combination of (\ref{a103}) with (\ref{a102}), we obtain \eqref{lv102}.

It remains to prove \eqref{le7xz}. If $\inf\limits_{x\in\Omega}\rho_0(x)=0,$ \eqref{le7xz} clearly holds. Assume that $\inf\limits_{x\in\Omega}\rho_0(x)>0,$
by \eqref{z.3}, We have
\bnn\ba
(\lambda+2\mu)D_t\rho+\rho(P-\bar{P})+\rho F=0.
\ea\enn
A simple computation shows
\bnn\ba
(\lambda+2\mu)D_t\rho^{-1}-\rho^{-1}(P-\bar{P}+F)=0.
\ea\enn
which yields that
\bnn\ba
D_t\rho^{-1}\leq C\rho^{-1}(|F|+1).
\ea\enn
Combining this with Gronwall's inequality, \eqref{xbh19} and \eqref{xbh20} gives \eqref{le7xz} and finishes
the proof of Lemma \ref{le7}.
\end{proof}
\section{\la{se5} A priori estimates (II): higher order estimates }
In this section, we will give some necessary higher-order a priori estimates of the smooth solution of \eqref{a1}-\eqref{ch1}, which make sure that one can extend the local solution to be a global one. We will adopt some ideas of the article \cite{jx01}.

From now on, we always assume that the initial energy $C_0$ satisfies (\ref{xbh21}), and the positive
constant $C $ may depend on \bnn  T,\,\, \|\rho g\|_{L^2},  \,\,\|\nabla u_0\|_{H^2},\,\,
    \|\n_0\|_{H^{3}},\,\,\|P(\n_0)\|_{H^{3}},\enn
 besides $\mu$, $\lambda$, $a$, $\ga$, $\hat{\rho},$ $\Omega$ and $
M,$ where $g\in L^2(\Omega)$ comes from \eqref{dt3}.
\begin{lemma}\la{xle1}
 There exists a positive constant $C,$ such that
\be\label{cxb2}
\sup_{0\le t\le T}\|\rho^{1/2}\dot{u}\|_{L^2}+\int_0^T\|\nabla\dot{u}\|_{L^2}^{2}dt\leq C,\ee
\be\label{cxb3}
\sup_{0\le t\le T}(\|\nabla\rho\|_{L^6}+\|u\|_{H^2})+\int_0^T\|\nabla u\|_{L^\infty}dt\leq C.\ee
\end{lemma}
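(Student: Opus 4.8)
The plan is to establish \eqref{cxb2} first, by running the argument behind Lemma~\ref{xcrle1} without the time weight $\sigma^m$ and feeding in the compatibility condition at $t=0$, and then to deduce \eqref{cxb3} from elliptic regularity together with a logarithmic Gr\"onwall estimate for $\|\nabla\rho\|_{L^6}$. For \eqref{cxb2}, I would apply $\partial_t+\div(u\,\cdot)$ to $(\ref{a1})_2$, test against $\dot u$, and integrate over $\Omega$ — i.e.\ repeat \eqref{ax1}--\eqref{ax40} with $\sigma^m$ replaced by $1$ — which gives a differential inequality of the shape
\begin{equation*}
\frac{d}{dt}\Big(\|\rho^{1/2}\dot u\|_{L^2}^2+\int_{\partial\Omega}(u\cdot\nabla n\cdot u)F\,ds\Big)+\|\nabla\dot u\|_{L^2}^2\le C\|\nabla u\|_{L^4}^4+C\|\rho^{1/2}\dot u\|_{L^2}^2\big(\|\nabla u\|_{L^2}^2+\|\nabla u\|_{L^2}^4\big)+C\big(\|\nabla u\|_{L^2}^2+\|\nabla u\|_{L^2}^6\big),
\end{equation*}
the boundary contributions being handled exactly as in \eqref{ax3} and \eqref{Fnn1}. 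At the initial time, the compatibility condition \eqref{dt3} gives $\rho_0\dot u(\cdot,0)=\rho_0 g$, hence $\|\rho_0^{1/2}\dot u(\cdot,0)\|_{L^2}^2=\int\rho_0|g|^2\,dx\le\hat\rho\|g\|_{L^2}^2\le C$, and, by \eqref{Fnn1} with \eqref{dt2}, the initial boundary term is likewise $\le C$.

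Integrating on $(0,t)$ and using \eqref{Fnn1} to absorb the boundary term on the left by $\tfrac12\|\rho^{1/2}\dot u\|_{L^2}^2+C$, together with the already established bounds $\sup_{[0,T]}\|\nabla u\|_{L^2}^2\le C$ and $\int_0^T\|\rho^{1/2}\dot u\|_{L^2}^2\,dt\le C$ from \eqref{xinl4} and $\rho\le 2\hat\rho$, every right-hand term is time-integrable except $\int_0^T\|\nabla u\|_{L^4}^4\,dt$. For that term I use \eqref{h18} and $\|\nabla u\|_{L^2},\|P-\bar P\|_{L^2},\|P-\bar P\|_{L^4}\le C$ to get $\|\nabla u\|_{L^4}^4\le C\|\rho^{1/2}\dot u\|_{L^2}^3+C$, whence $\int_0^T\|\nabla u\|_{L^4}^4\,dt\le C\sup_{[0,T]}\|\rho^{1/2}\dot u\|_{L^2}\int_0^T\|\rho^{1/2}\dot u\|_{L^2}^2\,dt+C\le C\sup_{[0,T]}\|\rho^{1/2}\dot u\|_{L^2}+C$. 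This produces $\sup_{[0,T]}\|\rho^{1/2}\dot u\|_{L^2}^2+\int_0^T\|\nabla\dot u\|_{L^2}^2\,dt\le C+C\sup_{[0,T]}\|\rho^{1/2}\dot u\|_{L^2}$, and Young's inequality gives \eqref{cxb2}.

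For \eqref{cxb3}: once \eqref{cxb2} holds, $(\ref{h19})$, \eqref{zh19}, \eqref{tb90}--\eqref{tb11} and $\rho\le 2\hat\rho$ yield $\|\nabla F\|_{L^6}+\|\nabla\curl u\|_{L^6}\le C(1+\|\nabla\dot u\|_{L^2})$ and (via Gagliardo--Nirenberg) $\|F\|_{L^\infty}+\|\curl u\|_{L^\infty}\le C(1+\|\nabla\dot u\|_{L^2})$; then, using $\div u=(F+P-\bar P)/(\lambda(\rho)+2\mu)$ and $|\nabla\lambda(\rho)|+|\nabla P|\le C|\nabla\rho|$, one gets $\|\nabla\div u\|_{L^6}\le C(1+\|\nabla\dot u\|_{L^2})(1+\|\nabla\rho\|_{L^6})$, and by Lemma~\ref{crle1} also $\|\nabla^2u\|_{L^6}\le C(1+\|\nabla\dot u\|_{L^2})(1+\|\nabla\rho\|_{L^6})$ and $\|u\|_{H^2}\le C(1+\|\nabla\dot u\|_{L^2}+\|\nabla\rho\|_{L^6})$. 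Differentiating $(\ref{a1})_1$, multiplying by $|\nabla\rho|^4\nabla\rho$ and integrating gives $\frac{d}{dt}\|\nabla\rho\|_{L^6}\le C(1+\|\nabla u\|_{L^\infty})\|\nabla\rho\|_{L^6}+C(1+\|\nabla\dot u\|_{L^2})$, and Lemma~\ref{le9} with $q=6$ combined with the above bounds yields $1+\|\nabla u\|_{L^\infty}\le C(1+\|\nabla\dot u\|_{L^2})\big(1+\ln(e+\|\nabla\dot u\|_{L^2})+\ln(e+\|\nabla\rho\|_{L^6})\big)$. Writing $y(t)=e+\|\nabla\rho\|_{L^6}$ and $\psi(t)=(1+\|\nabla\dot u\|_{L^2})(1+\ln(e+\|\nabla\dot u\|_{L^2}))$, this gives $(\ln y)'\le C\psi\ln y+C\psi$; since $a\ln(e+a)\le C(1+a^2)$ we have $\int_0^T\psi\,dt\le C\int_0^T(1+\|\nabla\dot u\|_{L^2}^2)\,dt\le C$ by \eqref{cxb2}, so Gr\"onwall gives $\sup_{[0,T]}\|\nabla\rho\|_{L^6}\le C$, hence $\sup_{[0,T]}\|u\|_{H^2}\le C$, and $\int_0^T\|\nabla u\|_{L^\infty}\,dt\le C\int_0^T(1+\|\nabla\dot u\|_{L^2})\,dt\le C$ by Cauchy--Schwarz, which is \eqref{cxb3}.

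The main obstacle is once more the boundary integrals generated by testing the time-differentiated momentum equation against $\dot u$ \emph{without} the time weight $\sigma^m$. On a curved domain the identity $\dot u\cdot n=-u\cdot\nabla n\cdot u$ forces terms such as $\int_{\partial\Omega}F_t\,\dot u\cdot n\,ds$ and $\int_{\partial\Omega}(u\cdot\nabla n\cdot u)F\,ds$, whose control relies on the trace theorem, Poincar\'e's inequality, the condition $\curl u\times n=0$, and the flux estimates of Lemma~\ref{le3}, precisely as in \eqref{ax3} and \eqref{Fnn1}. The key point is that all of these manipulations now go through uniformly down to $t=0$, because Section~\ref{se3} already supplies $\sup_{[0,T]}\|\nabla u\|_{L^2}\le C$ and $\rho\le 2\hat\rho$, so no smallness of $C_0$ is needed at this stage — only the compatibility condition \eqref{dt3}, which controls $\|\rho_0^{1/2}\dot u(\cdot,0)\|_{L^2}$ and thereby supplies the initial value for the non-weighted energy functional.
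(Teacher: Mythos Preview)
Your proposal is correct and follows essentially the same route as the paper: take $m=0$ in the weighted identity \eqref{ax401}, use the compatibility condition \eqref{dt3} for the initial datum of $\|\rho^{1/2}\dot u\|_{L^2}^2$, control the boundary term via \eqref{Fnn1}, and close \eqref{cxb2}; then run the $L^6$ transport estimate for $\nabla\rho$ together with the Beale--Kato--Majda type bound of Lemma~\ref{le9} and a logarithmic Gr\"onwall to get \eqref{cxb3}. The only difference is cosmetic: the paper absorbs $\|\nabla u\|_{L^4}^4$ into a Gr\"onwall factor $\|\rho^{1/2}\dot u\|_{L^2}^2(\|\rho^{1/2}\dot u\|_{L^2}^2+1)$ before integrating (see \eqref{cxb5}), whereas you integrate first and close with a Young inequality on $\sup\|\rho^{1/2}\dot u\|_{L^2}$; both work.

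One small slip: you write $\|u\|_{H^2}\le C(1+\|\nabla\dot u\|_{L^2}+\|\nabla\rho\|_{L^6})$ and conclude $\sup_{[0,T]}\|u\|_{H^2}\le C$, but \eqref{cxb2} only gives $\|\nabla\dot u\|_{L^2}\in L^2_t$, not $L^\infty_t$. The fix is immediate: for the $H^2$ bound you only need $\|\nabla F\|_{L^2}+\|\nabla\curl u\|_{L^2}\le C\|\rho\dot u\|_{L^2}+C\|\nabla u\|_{L^2}$ from \eqref{h19}--\eqref{zh19}, so in fact $\|u\|_{H^2}\le C(1+\|\rho^{1/2}\dot u\|_{L^2}+\|\nabla\rho\|_{L^6})$, and the supremum follows from \eqref{cxb2} and $\sup\|\nabla\rho\|_{L^6}\le C$.
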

\begin{proof}
Choosing $m=0$ in \eqref{ax401}, by \eqref{h18} and \eqref{xinl4}, we have
\be\la{cxb5} \ba
&\left(\|\rho^{\frac{1}{2}}\dot{u}\|_{L^2}^2\right)_t+\|\nabla\dot{u}\|_{L^2}^2\\
& \leq -\left(\int_{\partial\Omega}(u\cdot\nabla n\cdot u)Fds\right)_t+C(\|\rho^{\frac{1}{2}}\dot{u}\|_{L^2}^2+\|\nabla u\|_{L^4}^4)\\
& \leq -\left(\int_{\partial\Omega}(u\cdot\nabla n\cdot u)Fds\right)_t+C\|\rho^{\frac{1}{2}}\dot{u}\|_{L^2}^2(\|\rho^{\frac{1}{2}}\dot{u}\|_{L^2}^2+\|\nabla u\|_{L^2}^2+1) \\
&\quad+C(\|\nabla u\|_{L^2}^4+\|P-\bar{P}\|_{L^2}^4+\|P-\bar{P}\|_{L^4}^4)\\
& \leq -\left(\int_{\partial\Omega}(u\cdot\nabla n\cdot u)Fds\right)_t+C\|\rho^{\frac{1}{2}}\dot{u}\|_{L^2}^2(\|\rho^{\frac{1}{2}}\dot{u}\|_{L^2}^2+1)+C.
 \ea\ee
By Gronwall's inequality and the compatibility condition \eqref{dt3}, we deduce \eqref{cxb2} from \eqref{cxb5}, \eqref{Fnn1} and \eqref{xinl4}.

Next, we prove \eqref{cxb3} by following the proof of Lemma 5 in \cite{hlx}.
For $2\leq p\leq 6 ,$ $|\nabla\rho|^p$ satisfies \bnn \ba
& (|\nabla\rho|^p)_t + \text{div}(|\nabla\rho|^pu)+ (p-1)|\nabla\rho|^p\text{div}u  \\
 &+ p|\nabla\rho|^{p-2}(\nabla\rho)^{tr} \nabla u (\nabla\rho) +
p\rho|\nabla\rho|^{p-2}\nabla\rho\cdot\nabla\text{div}u = 0.\ea
\enn
Thus, taking $p=6$, by \eqref{h19}, \eqref{tb90} and \eqref{xinl4},
\be\la{cxb9}\ba
(\|\nabla\rho\|_{L^6})_t&\le C(1+\norm[L^{\infty}]{\nabla u} )\norm[L^6]{\nabla\rho} +C\|\nabla(\frac{F+P-\bar{P}}{\lambda(\rho)+2\mu})\|_{L^6}\\
&\le C(1+\norm[L^{\infty}]{\nabla u} )\norm[L^6]{\nabla\rho}+C(\|\rho\dot{u}\|_{L^6}+\|\nabla P\|_{L^6})\\
&\le C(1+\norm[L^{\infty}]{\nabla u} )\norm[L^6]{\nabla\rho}+C(\|\nabla\dot{u}\|_{L^2}+1).
 \ea\ee

We deduce from Gagliardo-Nirenberg's inequality, \eqref{tb90}, \eqref{h19} and \eqref{zh19} that
\be\la{cxb11}\ba
&\|\div u\|_{L^\infty}+\|\curl u\|_{L^\infty}\\
&\le C(\|F\|_{L^\infty}+\|P-\bar{P}\|_{L^\infty})+\|\curl u\|_{L^\infty} \\
&\le C(\|F\|_{L^2}+\|\nabla F\|_{L^6}+\|\curl u\|_{L^2}+\|\nabla \curl u\|_{L^6}+\|P-\bar{P}\|_{L^\infty}) \\
&\le C(\|\nabla u\|_{L^2}+\|P-\bar{P}\|_{L^2}+\|\rho\dot{u}\|_{L^6}+\|P-\bar{P}\|_{L^\infty}) \\
&\le C(\|\nabla\dot{u}\|_{L^2}+1),
\ea\ee
By Lemmas \ref{le9}, \ref{crle1}, \eqref{tb90}, \eqref{h19}, \eqref{zh19} and \eqref{h20}, it indicates that
\be\la{cxb12}\ba
\|\na u\|_{L^\infty } &\le C\left(\|{\rm div}u\|_{L^\infty }+
\|\curl u\|_{L^\infty } \right)\ln(e+\|\na^2 u\|_{L^6 }) +C\|\na
u\|_{L^2} +C \\
&\le C(1+\|\nabla\dot{u}\|_{L^2})\ln(e+\|\nabla\dot u\|_{L^2 } +\|\na \rho\|_{L^6}) \\
&\le C(1+\|\nabla\dot{u}\|_{L^2})\left[\ln(e+\|\nabla\dot u\|_{L^2 }) +\ln(e+\|\nabla\rho\|_{L^6})\right] \\
&\le C(1+\|\nabla\dot{u}\|_{L^2}^2)+C(1+\|\nabla\dot{u}\|_{L^2})\ln(e+\|\nabla\rho\|_{L^6}) .
\ea\ee
Consequently, \eqref{cxb9} becomes
\be\la{cxb13}\ba
&(e+\|\nabla\rho\|_{L^6})_t\\
&\leq C[1+\|\nabla\dot{u}\|_{L^2}^2+(1+\|\nabla\dot{u}\|_{L^2})\ln(e+\|\nabla\rho\|_{L^6})](e+\|\nabla\rho\|_{L^6}),\\
\ea\ee
which yields
\be\la{cxb14}\ba
\left(\ln(e+\|\nabla\rho\|_{L^6})\right)_t\leq C(1+\|\nabla\dot{u}\|_{L^2}^{2})+C(1+\|\nabla\dot{u}\|_{L^2})\ln(e+\|\nabla\rho\|_{L^6}).
\ea\ee
By Gronwall's inequality and \eqref{cxb2}, we obtain
\be\la{cxb15}\ba
\sup_{0\leq t\leq T}\|\nabla\rho\|_{L^{6}}\leq C .
\ea\ee

By \eqref{cxb12}, \eqref{cxb2}, \eqref{cxb15} and Lemma \ref{crle1}, we have
\be\la{cxb16}\ba
\int_0^T\|\nabla u\|_{L^\infty}dt\leq C,\,\, and\,\,\sup_{0\leq t\leq T}\|u\|_{H^{2}}\leq C.
\ea\ee

This completes the proof of Lemma \ref{xle1}.
\end{proof}

\begin{lemma}\la{xle2}
 There exists a positive constant $C$ such that
\be\la{cxb17}\ba
\sup_{0\le t\le T}\|\rho^{1/2}u_t\|_{L^2}^2 + \ia\int|\nabla u_t|^2dxdt\le C,
\ea\ee
\be\la{cxb18}\ba
\sup_{0\le t\le T}(\|{\rho- \bar{\rho}}\|_{H^2} +
 \|{P- \bar{P}}\|_{H^2}+\|\lambda(\rho)\|_{H^2})\le C.
\ea\ee
\end{lemma}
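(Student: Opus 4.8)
\textbf{Proof proposal for Lemma \ref{xle2}.}

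The plan is to prove \eqref{cxb17} first by differentiating the momentum equation in time, testing against $u_t$, and controlling every term (including the boundary integrals coming from the Navier-slip condition) using the lower-order bounds \eqref{cxb2}, \eqref{cxb3} already established in Lemma \ref{xle1}. Concretely, I would differentiate $(\ref{a1})_2$ with respect to $t$, multiply by $u_t$ and integrate over $\Omega$. Using the mass equation to write $(\rho u_t)_t + \div(\rho u u_t) = \rho u_{tt} + \rho u\cdot\nabla u_t$ and integrating by parts, the principal part yields $\frac12\frac{d}{dt}\int\rho|u_t|^2dx + \int\big[\mu|\nabla u_t|^2 + (\lambda(\rho)+\mu)(\div u_t)^2\big]dx$ on the left (after using $\curl u_t\times n=0$-type identities, or rather handling the curl/div decomposition as in Lemma \ref{le3}), and on the right a collection of terms involving $\rho_t$, $P_t$, $\lambda(\rho)_t$, $\nabla u$, $u\cdot\nabla u_t$, together with a boundary term of the form $\int_{\partial\Omega}(\lambda(\rho)+2\mu)\div u_t\,(u_t\cdot n)ds$. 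The crucial point is that $u_t\cdot n = -u\cdot\nabla u\cdot n - (u\cdot n)_t\cdots$; more precisely, differentiating $u\cdot n=0$ in $t$ gives $u_t\cdot n = 0$ on $\partial\Omega$ is \emph{not} available since $u_t$ need not satisfy the slip condition — instead, as in \eqref{bdd1} one uses $u_t\cdot n = (u\cdot n)_t - u\cdot n_t = 0$ because $n$ is time-independent, so in fact $u_t\cdot n=0$ on $\partial\Omega$ and the boundary term with $\div u_t$ drops; the remaining boundary contribution comes through $u\cdot\nabla u_t\cdot n = -u\cdot\nabla n\cdot u_t$ exactly as in \eqref{pzw1}–\eqref{bdd1}, which is bounded by the trace theorem by $C\|\nabla u\|_{L^2}\|\nabla u_t\|_{L^2}$, absorbable after Young. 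After bounding $\rho_t = -\div(\rho u)$, $P_t$, $\lambda(\rho)_t$ by $C\|\nabla u\|_{L^\infty}$ times lower-order norms (using \eqref{cxb3}) and using $\|\rho^{1/2}\dot u\|_{L^2}\le C$ from \eqref{cxb2} to relate $u_t$ and $\dot u$, one arrives at a differential inequality $\frac{d}{dt}\int\rho|u_t|^2dx + \int|\nabla u_t|^2dx \le C(1+\|\nabla\dot u\|_{L^2}^2)\big(1+\int\rho|u_t|^2dx\big)$, and Gronwall together with $\int_0^T\|\nabla\dot u\|_{L^2}^2dt\le C$ from \eqref{cxb2} gives \eqref{cxb17}; the initialization $\|\rho^{1/2}u_t(\cdot,0)\|_{L^2}\le C$ follows from the compatibility condition \eqref{dt3}, since $\rho_0^{1/2}u_t(0) = \rho_0^{1/2}(\dot u(0) - u_0\cdot\nabla u_0) = \rho_0^{1/2}(g - u_0\cdot\nabla u_0)\in L^2$.

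For \eqref{cxb18}, I would differentiate the transport equations for $\rho-\bar\rho$, $P-\bar P$, and $\lambda(\rho)$ (the latter satisfies $\lambda(\rho)_t + u\cdot\nabla\lambda(\rho) + \beta\lambda(\rho)\div u = 0$ by the chain rule and $(\ref{a1})_1$) and estimate their $H^2$ norms. Applying $\nabla^2$ to, say, the $\rho$-equation and testing against $\nabla^2(\rho-\bar\rho)$, the top-order terms produce $\frac{d}{dt}\|\nabla^2\rho\|_{L^2}^2 \le C(1+\|\nabla u\|_{L^\infty})\|\nabla^2\rho\|_{L^2}^2 + C\|\nabla^3 u\|_{L^2}\|\nabla^2\rho\|_{L^2} + (\text{lower order})$, where $\|\nabla^3 u\|_{L^2}$ must be controlled. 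Here one uses the elliptic structure: from \eqref{hod1}, $\nabla F = \rho\dot u + \mu\nabla\times\curl u$, so by Lemma \ref{le3} (specifically \eqref{h19} with $k=0$) and Lemma \ref{crle1}, $\|\nabla^2 u\|_{H^1}$ (hence $\|\nabla^3 u\|_{L^2}$) is bounded by $\|\nabla(\rho\dot u)\|_{L^2} + \|\nabla^2 P\|_{L^2} + \text{l.o.t.} \le C(\|\nabla\dot u\|_{L^2} + \|\nabla^2 P\|_{L^2} + \cdots)$, where $\|\nabla\dot u\|_{L^2}$ is only in $L^2(0,T;dt)$, so we must keep $\|\nabla^3 u\|_{L^2}$ under a time integral. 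The natural move is therefore to couple the three transport estimates into a single quantity $\Phi(t) \triangleq \|\nabla^2(\rho-\bar\rho)\|_{L^2}^2 + \|\nabla^2(P-\bar P)\|_{L^2}^2 + \|\nabla^2\lambda(\rho)\|_{L^2}^2$ and derive $\Phi'(t) \le C(1+\|\nabla u\|_{L^\infty} + \|\nabla\dot u\|_{L^2}^2)(1+\Phi(t))$, then apply Gronwall using $\int_0^T(\|\nabla u\|_{L^\infty} + \|\nabla\dot u\|_{L^2}^2)dt\le C$ from \eqref{cxb3} and \eqref{cxb2}–\eqref{cxb17}. The $H^1$ parts of \eqref{cxb18} come from \eqref{cxb3} directly (or from the same computation at one lower order), and the bound on $\nabla^2 u$ in $L^2$, and thereby the self-consistency of estimating $\|\nabla^3 u\|_{L^2}$, is obtained from Lemma \ref{crle1} and \eqref{h19} once $\Phi$ is controlled.

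The main obstacle I anticipate is the density-dependent bulk viscosity $\lambda(\rho)=b\rho^\beta$: because $\lambda$ varies, the effective viscous flux $F=(\lambda(\rho)+2\mu)\div u - (P-\bar P)$ carries the extra factor $\lambda(\rho)$, and controlling $\div u$ in high norms requires simultaneously controlling $\nabla^2\lambda(\rho)$, which in turn (via its transport equation) involves $\nabla^2\div u$ — a genuine coupling not present in the constant-viscosity case. This is why all three of $\rho$, $P$, and $\lambda(\rho)$ must be estimated together in $H^2$ rather than separately, and why the bound $\beta\in[1,\gamma+1]$ and the pointwise bound $\rho\le 2\hat\rho$ from Lemma \ref{le7} are essential to keep $\lambda(\rho)$, $\lambda'(\rho)\rho$, $\lambda''(\rho)\rho^2$ uniformly bounded. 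A secondary technical point is the careful treatment of the boundary integrals generated when testing the time-differentiated momentum equation, but these are handled by the now-familiar identity \eqref{bdd1} and the trace theorem exactly as in the proofs of Lemmas \ref{xcrle1} and \ref{le3}.
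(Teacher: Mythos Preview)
Your approach to \eqref{cxb18} is essentially the paper's: differentiate the transport equations for $\rho$, $P$, $\lambda(\rho)$ to second order, couple them, estimate the forcing $\|\nabla^2\div u\|_{L^2}$ via the effective-viscous-flux identity (so that only $\|\nabla\dot u\|_{L^2}$ and the coupled quantities $\|\nabla^2 P\|_{L^2}$, $\|\nabla^2\lambda(\rho)\|_{L^2}$ appear), and close by Gronwall using $\int_0^T(\|\nabla u\|_{L^\infty}+\|\nabla\dot u\|_{L^2}^2)\,dt\le C$ from Lemma \ref{xle1}. The paper compresses this into the single line \eqref{cxb22} and the remark ``One can handle $\lambda(\rho)$ similarly,'' but the mechanism is exactly what you describe, including the coupling you flag.

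For \eqref{cxb17}, however, you are working much harder than necessary. The paper does \emph{not} differentiate the momentum equation in time and run an energy/Gronwall argument for $u_t$; instead it simply uses the algebraic identity $u_t=\dot u - u\cdot\nabla u$ together with the bounds already obtained in Lemma \ref{xle1}. Concretely,
\[
\|\rho^{1/2}u_t\|_{L^2}^2\le \|\rho^{1/2}\dot u\|_{L^2}^2+\|\rho^{1/2}u\cdot\nabla u\|_{L^2}^2
\le C + C\|u\|_{L^6}^2\|\nabla u\|_{L^4}^2\le C,
\]
and
\[
\int_0^T\|\nabla u_t\|_{L^2}^2\,dt\le \int_0^T\|\nabla\dot u\|_{L^2}^2\,dt+\int_0^T\|\nabla(u\cdot\nabla u)\|_{L^2}^2\,dt\le C,
\]
using \eqref{cxb2} for the $\dot u$ terms and \eqref{cxb3} (in particular $\sup_t\|u\|_{H^2}\le C$) for the convective terms. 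No boundary integrals, no time-differentiated equation, no Gronwall, and no appeal to the compatibility condition are needed here. Your route would also succeed (your observation that $u_t\cdot n=0$ on $\partial\Omega$ is correct and makes the boundary analysis benign), but it duplicates work already packaged into the material-derivative estimates of Lemma \ref{xle1}; the paper's two-line argument is what those estimates were designed to buy.
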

\begin{proof} By Lemma \ref{xle1}, it is easy to get
\be\la{cxb19}\ba
\|\rho^{\frac{1}{2}}
u_t\|_{L^2}^2 &\le \|\rho^{1/2}\dot u \|_{L^2}^2+\|\rho^{1/2}u\cdot\nabla u\|_{L^2}^2\\
&\le C+C\|u\|_{L^6}^2\|\nabla u\|_{L^4}^2 \\
&\le C+C\|\nabla u\|_{L^2}^2\|\nabla u\|_{H^1}^2 \\
&\le C ,
\ea\ee
and
\be \la{cxb20}\ba
 \int_0^T\|\nabla u_t\|_{L^2}^2dt &\le\int_0^T\|\nabla \dot
u\|_{L^2}^2dt + \int_0^T\|\nabla(u\cdot\nabla u)\|_{L^2}^2dt \\
&\le C+\int_0^T(\|\nabla u\|_{L^4}^4+\|u\|_{L^\infty}^2\|\nabla^{2}u\|_{L^2}^2)dt  \\
&\le C+C\int_0^T(\|\nabla^{2}u\|_{L^2}^4+\|\nabla u\|_{H^1}^{2}\|u\|_{H^2}^2)dt \\
&\le C,
\ea\ee
 so we get \eqref{cxb17} . It remains to prove \eqref{cxb18} .

 Note that $P $ satisfies \eqref{Pu2}, that is,
 $$P_t+u\cdot\nabla P+\ga P{\rm div}u=0,$$
together with $(\ref{a1})_1$, by Lemmas \ref{crle1} and \ref{xle1}, a simple computation shows that
\be \la{cxb22}\ba
&\frac{d}{dt}\left(\|\nabla^2P\|_{L^2}^2 +\|\nabla^2\rho\|_{L^2}^2\right)\\
&\le C(1+\|\nabla u\|_{L^\infty})(\|\nabla^2P\|_{L^2}^2 +\|\nabla^2\rho
\|_{L^2}^2) + C\|\nabla\dot{u}\|_{L^2}^2 + C.
\ea\ee
Consequently, by Gronwall's inequality, we deduce from \eqref{cxb22} and Lemma \ref{xle1} that
  \bnn \sup_{0\le t\le T} {\left(\|\nabla^2P\|_{L^2}^2
+\|\nabla^2\rho\|_{L^2}^2 \right)}\le C. \enn
One can handle $\lambda(\rho)$ similarly. Thus the proof of Lemma \ref{xle2} is completed.
\end{proof}
\begin{lemma}\la{xle3}
There exists a positive constant $C,$ such that
 \be\la{cxb24}
   \sup\limits_{0\le t\le T}\left(
   \|\n_t\|_{H^1}+\|P_t\|_{H^1}\right)
    + \int_0^T\left(\|\n_{tt}\|_{L^2}^2+\|P_{tt}\|_{L^2}^2\right)dt
\le C,
  \ee
\be\la{cxb25}
\sup\limits_{0\le t\le T} \|\nabla u_t\|_{L^2}^2
    + \int_0^T\|\rho^{1/2}u_{tt}\|_{L^2}^2dt
\le C.
  \ee
\end{lemma}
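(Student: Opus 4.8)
My plan is to prove \eqref{cxb24} first, since the second-order bounds on $\rho_{tt},P_{tt}$ will be needed for \eqref{cxb25}. From the transport identities $\rho_t=-u\cdot\nabla\rho-\rho\,\div u$ and, by \eqref{Pu2}, $P_t=-u\cdot\nabla P-\gamma P\,\div u$, one differentiates in $x$ and uses $\|\rho\|_{L^\infty}\le 2\hat\rho$, $\|\nabla\rho\|_{L^6}+\|u\|_{H^2}\le C$ (Lemma \ref{xle1}) and $\|\nabla^2\rho\|_{L^2}\le C$ (Lemma \ref{xle2}), together with the same bounds for $P$, to get $\|\rho_t\|_{H^1}+\|P_t\|_{H^1}\le C$ immediately. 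Differentiating once more in $t$ gives $\rho_{tt}=-u_t\cdot\nabla\rho-u\cdot\nabla\rho_t-\rho_t\,\div u-\rho\,\div u_t$ and its analogue for $P_{tt}$; bounding $\|u_t\|_{L^6}\le C(1+\|\nabla u_t\|_{L^2})$, which follows from Gagliardo--Nirenberg together with a Poincar\'{e}-type inequality and $\|\rho^{1/2}u_t\|_{L^2}\le C$ from \eqref{cxb17}, yields $\|\rho_{tt}\|_{L^2}+\|P_{tt}\|_{L^2}\le C(1+\|\nabla u_t\|_{L^2})$, and squaring and integrating against \eqref{cxb17} gives $\int_0^T(\|\rho_{tt}\|_{L^2}^2+\|P_{tt}\|_{L^2}^2)dt\le C$.

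For \eqref{cxb25} I would differentiate the momentum equation in the form $\rho\dot{u}=\nabla F-\mu\nabla\times\curl u$ with respect to $t$; working with the effective flux $F$ avoids a separate $\nabla P_t$, and expanding $(\rho\dot{u})_t=\rho u_{tt}+\rho_t u_t+(\rho u\cdot\nabla u)_t$ one obtains
\[
\rho u_{tt}+\rho_t u_t+(\rho u\cdot\nabla u)_t=\nabla F_t-\mu\nabla\times\curl u_t .
\]
Multiplying by $u_{tt}$ and integrating over $\Omega$, the slip conditions help decisively: differentiating $u\cdot n=0$ and $\curl u\times n=0$ in $t$ gives $u_{tt}\cdot n=0$ and $\curl u_t\times n=0$ on $\partial\Omega$, so \emph{all} boundary integrals produced by the integrations by parts vanish, unlike in the lower-order estimates. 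With $F_t=(\lambda(\rho)+2\mu)\div u_t+(\lambda(\rho))_t\div u-(P_t-\bar{P}_t)$ this yields
\[
\frac{d}{dt}\Big(\tfrac{\mu}{2}\|\curl u_t\|_{L^2}^2+\tfrac12\!\int(\lambda(\rho)+2\mu)(\div u_t)^2dx\Big)+\int\rho|u_{tt}|^2dx=R,
\]
where $R=\tfrac12\int(\lambda(\rho))_t(\div u_t)^2-\int\big((\lambda(\rho))_t\div u-P_t\big)\div u_{tt}-\int\rho_t u_t\cdot u_{tt}-\int(\rho u\cdot\nabla u)_t\cdot u_{tt}$.

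The terms of $R$ with a $\div u_{tt}$ or with a factor $\rho_t$ against $u_{tt}$ are turned into time derivatives plus remainders (for instance $\int G\,\div u_{tt}=\frac{d}{dt}\int G\,\div u_t-\int G_t\,\div u_t$, and $\int\rho_t u_t\cdot u_{tt}=\frac12\frac{d}{dt}\int\rho_t|u_t|^2-\frac12\int\rho_{tt}|u_t|^2$, the $\rho_t$-part of the convection term being treated the same way). Collecting the resulting time-derivative pieces into a modified energy $\Psi(t)$, the crucial bookkeeping point is that — using $\|\rho^{1/2}u_t\|_{L^2}\le C$ from \eqref{cxb17}, e.g.\ $\big|\int\rho_t|u_t|^2\big|=\big|\int\rho\,u\cdot\nabla(|u_t|^2)\big|\le C\|\rho^{1/2}u_t\|_{L^2}\|\nabla u_t\|_{L^2}$ — every correction to $\Psi$ is \emph{linear} in $\|\nabla u_t\|_{L^2}$ and hence Young-absorbable with a small constant, so $c\|\nabla u_t\|_{L^2}^2-C\le\Psi(t)\le C(1+\|\nabla u_t\|_{L^2}^2)$ (Lemma \ref{crle1} recovers $\|\nabla u_t\|_{L^2}$ from $\|\div u_t\|_{L^2}+\|\curl u_t\|_{L^2}$ since $u_t\cdot n=0$). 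Every surviving $u_{tt}$-term is paired with the weight $\rho^{1/2}$ and absorbed into $\varepsilon\|\rho^{1/2}u_{tt}\|_{L^2}^2$, and the rest is bounded by $C\big(1+g(t)\big)\big(1+\|\nabla u_t\|_{L^2}^2\big)$ with $g\in L^1(0,T)$ gathering $\|\nabla u\|_{L^\infty}$, $\|\div u\|_{L^\infty}^2$, $\|\nabla\dot{u}\|_{L^2}^2$, $\|\rho_{tt}\|_{L^2}^2$ and $\|P_{tt}\|_{L^2}$ (integrable by \eqref{cxb3}, \eqref{cxb11}, \eqref{cxb17}, \eqref{cxb24}). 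Gronwall's inequality, with $\|\nabla u_t(0)\|_{L^2}$ controlled via the compatibility condition \eqref{dt3} and the $H^3$-regularity of $(\rho_0,u_0)$, then gives \eqref{cxb25}.

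The genuinely new difficulty, absent from the constant-viscosity theory of \cite{hlx1,CCL1}, is the term $\int(\lambda(\rho))_t(\div u_t)^2$ together with its companion $\int(\lambda(\rho))_{tt}\div u\,\div u_t$, both produced by differentiating the density-dependent coefficient in $t$; since $\|(\lambda(\rho))_t\|_{L^\infty}$ is unavailable at this level, the companion term must be handled through $(\lambda(\rho))_{tt}=\lambda''(\rho)\rho_t^2+\lambda'(\rho)\rho_{tt}$ and the bound $\int_0^T\|\rho_{tt}\|_{L^2}^2dt\le C$ just established, while for the main term I would use $\int(\lambda(\rho))_t(\div u_t)^2\le C\|\nabla u_t\|_{L^2}\|\nabla^2u_t\|_{L^2}\le\varepsilon\|\nabla^2u_t\|_{L^2}^2+C\|\nabla u_t\|_{L^2}^2$ together with the elliptic estimate $\|\nabla^2u_t\|_{L^2}^2\le C\big(\|\rho^{1/2}u_{tt}\|_{L^2}^2+1+\|\nabla u_t\|_{L^2}^2+k(t)\big)$, $k\in L^1(0,T)$, obtained by applying the div--curl regularity (Lemma \ref{crle1} with $k=1$) to the $t$-differentiated equation under $u_t\cdot n=\curl u_t\times n=0$ on $\partial\Omega$, after which a final Young's inequality moves the $\|\rho^{1/2}u_{tt}\|_{L^2}^2$ contribution back to the left. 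Carrying out this interpolation while keeping the weight structure precise throughout (every $u_{tt}$ multiplied by $\rho^{1/2}$, every correction to $\Psi$ linear in $\|\nabla u_t\|_{L^2}$) is the main obstacle; with it in hand, \eqref{cxb25} follows from Gronwall.
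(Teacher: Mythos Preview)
Your proposal is correct and follows essentially the same architecture as the paper: differentiate the momentum equation in $t$, test with $u_{tt}$, package the problematic (non-$\rho$-weighted) $u_{tt}$-terms as time derivatives of a modified energy $\Psi$, invoke the elliptic estimate $\|\nabla^2 u_t\|_{L^2}\le C(\|\rho^{1/2}u_{tt}\|_{L^2}+\|\nabla u_t\|_{L^2}^2+1)$ to close the $\int\lambda_t(\div u_t)^2$ term, and finish with Gronwall against $\int_0^T\|\nabla u_t\|_{L^2}^2dt\le C$. The only noteworthy difference is tactical: for the $\rho_t$-weighted pieces you use the time-derivative trick $\int\rho_t u_t\cdot u_{tt}=\tfrac12\frac{d}{dt}\int\rho_t|u_t|^2-\tfrac12\int\rho_{tt}|u_t|^2$ together with the $\int_0^T\|\rho_{tt}\|_{L^2}^2dt$ bound from \eqref{cxb24}, whereas the paper substitutes $\rho_t=-\div(\rho u)$ and integrates by parts to restore the $\rho$-weight on $u_{tt}$ directly; both routes are standard and equally effective here.
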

\begin{proof} It follows from (\ref{Pu2}) and Lemma \ref{xle1} that
\be \la{cxb26}
\|P_t\|_{L^2}\le
C\|u\|_{L^\infty}\|\nabla P\|_{L^2}+C\|\nabla u\|_{L^2}\le C.
\ee
Differentiating (\ref{Pu2}) with respect to $x$ yields
\bnn
\nabla P_t+u\cdot\nabla\nabla P+\nabla u\cdot\nabla P+\ga \nabla P {\rm div}u+\ga P  \nabla{\rm div}u=0.
\enn
By Lemmas \ref{xle1} and \ref{xle2}, we obtain
\bn\la{cxb27} \|\nabla P_t\|_{L^2}\le C\|u\|_{L^\infty}\|\nabla^2
P\|_{L^2}+C\|\nabla u\|_{L^6}\|\nabla P\|_{L^3}+C\|\nabla^2
u\|_{L^2}\le C.\en
The combination of (\ref{cxb26}) with (\ref{cxb27})
implies
\bn \la{cxb28}\sup_{0\le t\le T}\|P_t\|_{H^1}\le C.
\en

By \eqref{Pu2} again, it is easy to check that $P_{tt}$ satisfies
\be\la{cxb29} P_{tt} + \gamma P_t{\rm div}u +
\gamma P{\rm div}u_t + u_t\cdot\nabla P + u\cdot\nabla P_t = 0.
\ee
Multiplying \eqref{cxb29} by $P_{tt}$ and integrating over $\Omega\times[0,T],$ by \eqref{cxb28} and Lemmas \ref{xle1}, \ref{xle2}, we get
\bnn \ba
&\int_0^T\|P_{tt}\|_{L^2}^2dt \\
& = -\int_0^T\int\gamma P_{tt}P_t\div udxdt - \int_0^T\int\gamma P_{tt}P\div u_tdxdt  \\
& \quad - \int_0^T\int P_{tt}u_t\cdot\nabla Pdxdt - \int_0^T\int P_{tt}u\cdot\nabla P_tdxdt \\
& \le C\int_0^T\|P_{tt}\|_{L^2}(\|P_{t}\|_{L^3}\|\nabla u\|_{L^6}+\|\nabla u_t\|_{L^2}+\|u_t\|_{L^3}\|\nabla P\|_{L^6}+\|u\|_{L^\infty}\|\nabla P_{t}\|_{L^2})dt\\  & \le C\int_0^T\|P_{tt}\|_{L^2}(1+\|\nabla u_t\|_{L^2})dt\\
& \le \frac{1}{2}\int_0^T\|P_{tt}\|_{L^2}^2dt+C\int_0^T\|\nabla u_t\|_{L^2}^2dt+C\\
& \le \frac{1}{2}\int_0^T\|P_{tt}\|_{L^2}^2dt+C,
\ea\enn
so we have
$$\int_0^T\|P_{tt}\|_{L^2}^2dt \le C .$$

One can handle $\n_t$ and
$\n_{tt}$ similarly. Thus (\ref{cxb24}) is proved.

It remains to prove (\ref{cxb25}). Introducing the function
$$H(t)=\int(\lambda(\rho)+2\mu)(\div u_t)^{2}dx+\mu\int|\curl u_t|^{2}dx .$$
Since $u_t\cdot n = 0$ on $\partial\Omega$, by Lemma \ref{crle1}, we have
\be\ba\la{cxb40}
\|\nabla u_t\|_{L^2}^2\leq CH(t).
\ea\ee

Differentiating  $(\ref{a1})_2$  with respect to $t$ shows
\be\la{zj1} \ba
&\rho u_{tt}+\rho u\cdot\nabla u_t-\nabla((\lambda(\rho)+2\mu)\div u_t)+\mu\nabla\times\curl u_t\\
&=-\rho_t\dot{u}-\rho u_t\cdot\nabla u+\nabla(\lambda_t\div u)-\nabla P_t.
\ea\ee
Multiplying \eqref{zj1} by $u_{tt}$ and integrating over $\Omega,$ we obtain
\be\la{cxb34} \ba
&\frac{1}{2}\frac{d}{dt}H(t)+\int\rho|u_{tt}|^2dx  \\
&=-\int\left(2\rho u\cdot\nabla u_t\cdot u_{tt}+\rho u_t\cdot\nabla u\cdot u_{tt}\right)dx-\int\rho u\cdot\nabla(u\cdot\nabla u)\cdot u_{tt}dx \\
&\quad -\int\rho u\cdot\nabla u_{tt}\cdot\dot{u}dx+\frac{1}{2}\int\lambda_t(\div u_t)^{2}dx - \int\lambda_t\div u\div u_{tt}dx+\int P_t\div u_{tt}dx.
\ea \ee

The terms on the right-hand side of \eqref{cxb34} will be estimated one by one.

First, it follows from \eqref{cxb3}, \eqref{g1}, and Sobolev embedding theorem that
\be\la{zj2} \ba
&\left|\int\left(2\rho u\cdot\nabla u_t\cdot u_{tt}+\rho u_t\cdot\nabla u\cdot u_{tt}\right)dx\right|+\left|\int\rho u\cdot\nabla(u\cdot\nabla u)\cdot u_{tt}dx\right| \\
&\leq\varepsilon\int\rho|u_{tt}|^2dx+C(\hat{\rho})(\|u\|_{L^{\infty}}^2\|\nabla u_t\|_{L^2}^2+\|\rho^{1/2}u_t\|_{L^4}^2\|\nabla u\|_{L^4}^2)\\
&\quad+C\int(\rho|u|^2|\nabla u|^4+\rho|u|^4|\nabla^2u|^2)dx\\
&\leq\varepsilon\int\rho|u_{tt}|^2dx+C(\|\nabla u_t\|_{L^2}^2+1).
\ea \ee
Next, a direct calculation gives
\be\la{zj3} \ba
&-\int\rho u\cdot\nabla u_{tt}\cdot\dot{u}dx\\
&=-\frac{d}{dt}\int\rho u\cdot\nabla u_{t}\cdot\dot{u}dx+\int(\rho u)_t\cdot\nabla u_t\cdot\dot{u}dx\\
&\quad+\int\rho u\cdot\nabla u_t\cdot(u_{tt}+u_t\cdot\nabla u+u\cdot\nabla u_t)dx.
\ea \ee
On the one hand, it follows from \eqref{g1}, \eqref{tb90}, \eqref{cxb3} and \eqref{cxb24} that
\be\la{zj4} \ba
&\left|\int(\rho u)_t\cdot\nabla u_t\cdot\dot{u}dx\right|\\
&\leq\int|\rho u_t\cdot\nabla u_{t}\cdot\dot{u}|dx+\int|\rho_t u\cdot\nabla u_t\cdot\dot{u}|dx\\
&\leq C(\hat{\rho})\|u_t\|_{L^6}\|\nabla u_t\|_{L^2}\|\dot{u}\|_{L^6}+\|\rho_t\|_{L^2}\|\nabla u_t\|_{L^6}\|\dot{u}\|_{L^6}\\
&\leq C(\hat{\rho})\|\nabla u_t\|_{L^2}^2(\|\nabla u_t\|_{L^2}+1)+C(\|\nabla u_t\|_{L^2}+\|\nabla^2 u_t\|_{L^2})(\|\nabla u_t\|_{L^2}+1)\\
&\leq C(\hat{\rho})\|\nabla u_t\|_{L^2}^4+\delta\|\nabla^2 u_t\|_{L^2}^2+C.
\ea \ee
On the other hand, Cauchy's inequality, \eqref{g1} and \eqref{cxb3} show that
\be\la{zj5} \ba
&\left|\int\rho u\cdot\nabla u_t\cdot(u_{tt}+u_t\cdot\nabla u+u\cdot\nabla u_t)dx\right|\\
&\leq\varepsilon\int\rho|u_{tt}|^2dx+C(\hat{\rho})\|u\|_{L^\infty}^2\int|\nabla u_t|^2dx+C(\hat{\rho})\|u\|_{L^\infty}\|\nabla u_t\|_{L^2}\|u_t\|_{L^4}\|\nabla u\|_{L^4}\\
&\quad+C(\hat{\rho})\|u\|_{L^\infty}^2\|\nabla u_t\|_{L^2}^2\\
&\le \varepsilon\int\rho|u_{tt}|^2dx+C\int|\nabla u_t|^2dx.
\ea \ee

Putting \eqref{zj4} and \eqref{zj5} into \eqref{zj3} shows
\be\la{zj6} \ba
&-\int\rho u\cdot\nabla u_{tt}\cdot\dot{u}dx\\
&=-\frac{d}{dt}\int\rho u\cdot\nabla u_{t}\cdot\dot{u}dx+\varepsilon\int\rho|u_{tt}|^2dx+\delta\|\nabla^2 u_t\|_{L^2}^2+C\|\nabla u_t\|_{L^2}^4+C.
\ea \ee

Note that
\be\la{zjh1} \ba
\lambda_t=(1-\beta)\lambda\div u-\div(\lambda u).
\ea \ee
Since $\|\nabla\lambda\|_{L^6}$ is bounded,
therefore \be\la{hzj6} \ba\|\lambda_t\|_{L^6}\leq C(\hat{\rho})\|\nabla u\|_{L^6}+C(\hat{\rho})\|u\|_{L^\infty}\|\nabla\lambda\|_{L^6}\leq C,\ea \ee
by Sobolev inequality, one has
\be\la{zj7} \ba
&\int\lambda_t(\div u_t)^2dx\leq C\|\lambda_t\|_{L^2}\|\nabla u_t\|_{L^4}^2\leq \frac{\delta}{3}\|\nabla^2 u_t\|_{L^2}^2+C\|\nabla u_t\|_{L^2}^2.
\ea \ee
So we have
\be\la{zj8} \ba
&-\int\lambda_t\div u\div u_{tt}dx\\
&=-\frac{d}{dt}\int\lambda_t\div u\div u_tdx+\int\lambda_t(\div u_t)^2dx\\
&\quad-(\beta-1)\int(\lambda\div u)_t\div u\div u_tdx+\int(\lambda u)_t\cdot\nabla(\div u\div u_t)dx.
\ea \ee
It follows from Cauchy's inequality and \eqref{cxb3} that
\be\la{zj9} \ba
&\left|\int(\lambda\div u)_t\div u\div u_tdx\right|\\
&\leq C(\|\lambda_t\|_{L^2}\|\nabla u\|_{L^6}^2\|\nabla u_t\|_{L^6}+\|\lambda\|_{L^\infty}\|\nabla u\|_{L^2}\|\nabla u_t\|_{L^4}^2)\\
&\leq\frac{\delta}{3}\|\nabla^2 u_t\|_{L^2}^2+C\|\nabla u_t\|_{L^2}^2+C,
\ea \ee
and that
\be\la{zj10} \ba
&\left|\int(\lambda u)_t\cdot\nabla(\div u\div u_t)dx\right|\\
&\leq\int(|\lambda_t||u||\nabla u||\nabla^2u_t|+|\lambda_t||u||\nabla^2 u||\nabla u_t|+|\lambda||u_t||\nabla^2 u||\nabla u_t|+|\lambda||u_t||\nabla u||\nabla^2u_t|)dx\\
&\leq C(\|\lambda_t\|_{L^6}\|u\|_{L^\infty}\|\nabla u\|_{L^3}\|\nabla^2u_t\|_{L^2}+\|\lambda_t\|_{L^6}\|u\|_{L^\infty}\|\nabla^2 u\|_{L^2}\|\nabla u_t\|_{L^3})\\
&\quad+C(\|u_t\|_{L^6}\|\nabla^2u\|_{L^2}\|\nabla u_t\|_{L^3}+\|u_t\|_{L^6}\|\nabla u\|_{L^3}\|\nabla^2 u_t\|_{L^2})\\
&\leq\frac{\delta}{3}\|\nabla^2 u_t\|_{L^2}^2+C(1+\|\nabla u_t\|_{L^2}^2).
\ea \ee
Putting \eqref{zj7}, \eqref{zj9}, and \eqref{zj10} into \eqref{zj8} gives
\be\la{zj11} \ba
&-\int\lambda_t\div u\div u_{tt}dx\leq-\frac{d}{dt}\int\lambda_t\div u\div u_tdx+\delta\|\nabla^2 u_t\|_{L^2}^2+C(1+\|\nabla u_t\|_{L^2}^2).
\ea \ee

Finally, it follows from \eqref{Pu1}, \eqref{cxb18}, \eqref{cxb24} and \eqref{g1} that
\be\la{zj12} \ba
\int P_t\div u_{tt}dx&=\frac{d}{dt}\int P_t\div u_tdx-\int(Pu)_t\cdot\nabla\div u_tdx+(\gamma-1)\int(P\div u)_t\div u_tdx\\
&\leq\frac{d}{dt}\int P_t\div u_tdx+C(\|u\|_{L^\infty}\|P_t\|_{L^2}+\|P\|_{L^\infty}\|u_t\|_{L^2})\|\nabla^2u_t\|_{L^2}\\
&\quad+C(\|P_t\|_{L^2}\|\nabla u\|_{L^3}\|\nabla u_t\|_{L^6}+\|P\|_{L^\infty}\|\nabla u_t\|_{L^2}^2)\\
&\leq\frac{d}{dt}\int P_t\div u_tdx+\delta\|\nabla^2u_t\|_{L^2}^2+C(\|\nabla u_t\|_{L^2}^2+1).
\ea \ee

Substituting \eqref{zj2}, \eqref{zj6}, \eqref{zj7}, \eqref{zj11}, \eqref{zj12} into \eqref{cxb34} and choosing $\varepsilon$ suitably small lead to
\be\la{zj13} \ba
\psi'(t)+\int\rho|u_{tt}|^2dx\leq C\delta\|\nabla^2u_t\|_{L^2}^2+C\|\nabla u_t\|_{L^2}^4+C,
\ea \ee
where
\bnn\ba
\psi(t)&\triangleq\int((\lambda(\rho)+2\mu)(\div u_t)^2+\mu|\curl u_t|^2)dx\\
&\quad-2\int(P_t\div u_t-\lambda_t\div u\div u_t-\rho u\cdot\nabla u_t\cdot\dot{u})dx
\ea\enn
satisfies
\bnn\ba
C_0(\mu)\|\nabla u_t\|_{L^2}^2-C\leq\psi(t)\leq C\|\nabla u_t\|_{L^2}^2+C,
\ea\enn
duo to the following simple fact:
\bnn\ba
&\left|\int(P_t\div u_t-\lambda_t\div u\div u_t-\rho u\cdot\nabla u_t\cdot\dot{u})dx\right|\\
&\leq C(\|P_t\|_{L^2}\|\nabla u_t\|_{L^2}+\|\lambda_t\|_{L^6}\|\nabla u\|_{L^3}\|\nabla u_t\|_{L^2})\\
&\quad+C\|u\|_{L^\infty}\|\nabla u_t\|_{L^2}(\|\rho^{1/2} u_t\|_{L^2}+\|u\cdot\nabla u\|_{L^2})\\
&\leq\varepsilon\|\nabla u_t\|_{L^2}^2+C,
\ea\enn
which comes from \eqref{cxb3}, \eqref{cxb17}, \eqref{cxb24} and \eqref{hzj6}.

Then, it remains to estimate the first term on the right-hand side of \eqref{zj13}. In fact, we obtain from \eqref{zjh1} that
\be\la{zj14} \ba
\|\nabla\lambda_t\|_{L^2}&\leq C(\|\nabla u\|_{L^\infty}\|\nabla\lambda\|_{L^2}+\|u\|_{L^\infty}\|\nabla^2\lambda\|_{L^2}+\|\nabla^2u\|_{L^2})\\
&\leq C+C\|\nabla^3u\|_{L^2}\\
&\leq C+C\|\nabla u_t\|_{L^2},
\ea \ee
where in the last inequality we have used Lemma \ref{crle1}, \eqref{cxb2}, \eqref{cxb3}, \eqref{cxb18} and the following facts:
\be\la{zjh2} \ba
\|\nabla(\rho\dot{u})\|_{L^2}&\leq\||\nabla\rho||u_t|\|_{L^2}+\|\rho\nabla u_t\|_{L^2}+\||\nabla\rho||u||\nabla u|\|_{L^2}\\
&\quad+\|\rho|\nabla u|^2\|_{L^2}+\|\rho|u||\nabla^2u|\|_{L^2}\\
&\leq\|\nabla\rho\|_{L^3}\|u_t\|_{L^6}+C\|\nabla u_t\|_{L^2}+C\|\nabla\rho\|_{L^3}\|u\|_{L^\infty}\|\nabla u\|_{L^6}\\
&\quad+C\|\nabla u\|_{L^3}\|\nabla u\|_{L^6}+C\|u\|_{L^\infty}\|\nabla^2u\|_{L^2}\\
&\leq C\|\nabla u_t\|_{L^2}+C,
\ea \ee
and
\be\la{zj20} \ba&\|\nabla^3 u\|_{L^2}\\&\le C (\|\div u\|_{H^2}+\|\curl u\|_{H^2})\\
&\le C\left(\left\|\frac{F+P-\bar{P}}{\lambda(\rho)+2\mu}\right\|_{H^2}+\|\rho\dot{u}\|_{L^2}+\|\nabla(\rho\dot{u})\|_{L^2}+\|\nabla u\|_{L^2}\right)\\
&\le C\left\||\nabla^2(\frac{1}{\lambda(\rho)+2\mu})||F+P-\bar{P}|\right\|_{L^2}+C\left\||\nabla(\frac{1}{\lambda(\rho)+2\mu})||\nabla(F+P-\bar{P})|\right\|_{L^2}\\
&\quad+C\left\|\frac{\nabla^2(F+P-\bar{P})}{\lambda(\rho)+2\mu}\right\|_{L^2}+C(\|\rho\dot{u}\|_{L^2}+\|\nabla(\rho\dot{u})\|_{L^2}+1)\\
&\le C(\hat{\rho})(\|F\|_{L^\infty}+1)\left\|\nabla^2(\frac{1}{\lambda(\rho)+2\mu})\right\|_{L^2}+C(\hat{\rho})\left\||\nabla\rho||\nabla(F+P-\bar{P})|\right\|_{L^2}\\
&\quad+C(\hat{\rho})(\|\nabla^2F\|_{L^2}+\|\nabla^2P\|_{L^2})+C(\|\rho\dot{u}\|_{L^2}+\|\nabla(\rho\dot{u})\|_{L^2}+1)\\
&\le C(\hat{\rho})(\|\nabla\dot{u}\|_{L^2}+1)(\|\nabla^2\lambda\|_{L^2}+\|\nabla\lambda\|_{L^4}^2)+C(\hat{\rho})(\|\nabla\rho\|_{L^3}\|\nabla F\|_{L^6}+\|\nabla\rho\|_{L^4}^2)\\
&\quad+C(\|\rho\dot{u}\|_{L^2}+\|\nabla(\rho\dot{u})\|_{L^2}+1)\\
&\leq C\|\nabla u_t\|_{L^2}+C,
 \ea\ee
 due to \eqref{cxb18} and Lemmas \ref{xle1} and \ref{crle1}.

Using slip boundary condition \eqref{ch1}, we obtain from \eqref{zj1} that
\bnn\ba
&\|\nabla((\lambda(\rho)+2\mu)\div u_t)\|_{L^2}^2+\mu^2\|\nabla\times\curl u_t\|_{L^2}^2\\
&=\int|\nabla((\lambda(\rho)+2\mu)\div u_t)-\mu\nabla\times\curl u_t|^2dx\\
&=\int|\rho u_{tt}+\rho_t\dot{u}+\rho u\cdot\nabla u_t+\rho u_t\cdot\nabla u-\nabla(\lambda_t\div u)+\nabla P_t|^2dx\\
&\leq C(\hat{\rho})\int\rho|u_{tt}|^2dx+C(\|\rho_t\dot{u}\|_{L^2}^2+\|\nabla u_t\|_{L^2}^2+\|u_t\|_{L^4}^2\|\nabla u\|_{L^4}^2)\\
&\quad+C(\|\nabla u\|_{L^\infty}^2\|\nabla \lambda_t\|_{L^2}^2+\|\lambda_t\|_{L^4}^2\|\nabla^2 u\|_{L^4}^2+\|\nabla P_t\|_{L^2}^2)\\
&\leq C\int\rho|u_{tt}|^2dx+C(\|\rho_t\|_{L^4}^2\|\dot u\|_{L^4}^2+\|\nabla u_t\|_{L^2}^4+1)\\
&\leq C\int\rho|u_{tt}|^2dx+C(\|\nabla u_t\|_{L^2}^4+1),
\ea\enn
where in the last inequality we have used \eqref{cxb3}, \eqref{cxb24}, \eqref{zj14}, \eqref{cxb27}, and \eqref{zjh2}. This combined with Lemmas \ref{crle1} and \ref{crle2} yields that

\bnn\ba
&\|\nabla^2 u_t\|_{L^2}\\
&\leq C(\|\div u_t\|_{H^1}+\|\curl u_t\|_{H^1})\\
&\leq C(\|\nabla u_t\|_{L^2}+\|\nabla\div u_t\|_{L^2}+\|\nabla\curl u_t\|_{L^2})\\
&\leq C(\|\nabla\div u_t\|_{L^2}+\|\nabla\times\curl u_t\|_{L^2}+\|\nabla u_t\|_{L^2})\\
&\leq C(\|\nabla((\lambda(\rho)+2\mu)\div u_t)\|_{L^2}+\|\nabla\times\curl u_t\|_{L^2})\\
&\quad+C(\|\nabla\lambda(\rho)\|_{L^4}\|\nabla u_t\|_{L^4}+C\|\nabla u_t\|_{L^2})\\
&\leq C(\|\rho^{1/2}u_{tt}\|_{L^2}+\|\nabla u_t\|_{L^2}^2+1)+\frac{1}{2}\|\nabla^2u_t\|_{L^2},
\ea \enn
Consequently,
\be\la{zjh3} \ba
\|\nabla^2 u_t\|_{L^2}\leq C(\|\rho^{1/2}u_{tt}\|_{L^2}+\|\nabla u_t\|_{L^2}^2+1).
\ea \ee
Substituting \eqref{zjh3} into \eqref{zj13} and choosing $\delta$ suitably small give \eqref{cxb25}.
This completes the proof of Lemma \ref{xle3}.
\end{proof}
\begin{lemma}\la{xle4}
It holds that
\be\la{cxb44}\ba \sup_{t\in[0,T]}\left(\|\rho- \bar{\rho}\|_{H^{3}} +\|P-\bar{P}\|_{H^{3}}+\|\nabla^3\lambda(\rho)\|_{L^{2}}\right)\le C,\ea \ee
\be\la{cxb45}\ba\sup_{t\in[0,T]}\|u\|_{H^3}^2 +\int_0^{T}(\|\nabla u\|_{H^3}^2+\|\nabla u_t\|_{H^1}^2)dt\le C.\ea \ee
\end{lemma}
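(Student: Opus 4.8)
\medskip
\noindent\textbf{Proof strategy.} The plan is to close a Gronwall inequality for
\[
\Phi(t)\triangleq 1+\|\nabla^3\rho\|_{L^2}^2+\|\nabla^3 P\|_{L^2}^2+\|\nabla^3\lambda(\rho)\|_{L^2}^2,
\]
using only the bounds already established in Lemmas \ref{xle1}--\ref{xle3}, and then to recover the remaining space--time bounds on $u$ and $u_t$ by substituting the result back into an elliptic estimate for $\nabla^4 u$.

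First I would assemble the $u$- and $u_t$-estimates that are already available. Combining \eqref{zj20} with \eqref{cxb25} gives $\sup_{[0,T]}\|\nabla^3 u\|_{L^2}\le C$, hence, together with \eqref{cxb3}, $\sup_{[0,T]}\|u\|_{H^3}\le C$; moreover $\int_0^T\|\nabla u\|_{L^\infty}\,dt\le C$ is already contained in \eqref{cxb3}. Squaring \eqref{zjh3} and integrating in $t$, using $\int_0^T\|\rho^{1/2}u_{tt}\|_{L^2}^2\,dt\le C$ and $\sup_{[0,T]}\|\nabla u_t\|_{L^2}^2\le C$ from \eqref{cxb25} together with $\int_0^T\|\nabla u_t\|_{L^2}^2\,dt\le C$ from \eqref{cxb17}, produces the auxiliary bound $\int_0^T\|\nabla^2 u_t\|_{L^2}^2\,dt\le C$, which will play the role of the forcing in the Gronwall argument.

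The first genuinely new step is a variable-coefficient elliptic estimate for $\|\nabla^4 u\|_{L^2}$. Writing $(\ref{a1})_2$ in the form $\nabla\bigl((\lambda(\rho)+2\mu)\div u\bigr)-\mu\nabla\times\curl u=\rho\dot u+\nabla P$ and applying the div--curl Lemmas \ref{crle1}--\ref{crle2} to $\nabla u$, one bounds $\|\nabla^4 u\|_{L^2}$ by $\|\nabla^3\div u\|_{L^2}+\|\nabla^3\curl u\|_{L^2}$ plus lower-order terms; using $\div u=(F+P-\bar P)/(\lambda(\rho)+2\mu)$ with the uniform ellipticity $\lambda(\rho)+2\mu\ge 2\mu>0$ and the $H^2$-bounds of Lemma \ref{xle2}, the identity $\nabla F=\rho\dot u+\mu\nabla\times\curl u$ from \eqref{hod1}, and the vorticity equation $\mu\Delta\curl u=\nabla\times(\rho\dot u)$ with $\curl u\times n=0$ on $\partial\Omega$, I expect to arrive, with every term entering at most linearly, at
\[
\|\nabla^4 u\|_{L^2}\le C\bigl(1+\|\nabla^2 u_t\|_{L^2}+\|\nabla^3\rho\|_{L^2}+\|\nabla^3 P\|_{L^2}+\|\nabla^3\lambda(\rho)\|_{L^2}\bigr).
\]
Next I would apply $\nabla^3$ to $(\ref{a1})_1$, to the pressure equation \eqref{Pu2}, and to \eqref{zjh1}, multiply by $\nabla^3\rho$, $\nabla^3 P$, $\nabla^3\lambda(\rho)$ respectively and integrate by parts: the transport terms yield $C\|\nabla u\|_{L^\infty}\Phi$, the top-order forcing $\rho\nabla^3\div u\sim\nabla^4 u$ yields $C\|\nabla^4 u\|_{L^2}\Phi^{1/2}$, and all remaining commutators are products of factors controlled by Lemmas \ref{xle1}--\ref{xle2}. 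Inserting the elliptic estimate, using Young's inequality and $\Phi\ge 1$, I obtain
\[
\Phi'(t)\le C\bigl(1+\|\nabla u\|_{L^\infty}\bigr)\Phi(t)+C\|\nabla^2 u_t\|_{L^2}^2+C.
\]
Since $\int_0^T\|\nabla u\|_{L^\infty}\,dt\le C$ by \eqref{cxb3} and $\int_0^T\|\nabla^2 u_t\|_{L^2}^2\,dt\le C$ by the auxiliary bound above, Gronwall's inequality gives $\sup_{[0,T]}\Phi(t)\le C$, which, together with the $H^2$-bounds of Lemma \ref{xle2}, is \eqref{cxb44}. Feeding this back into the elliptic estimate gives $\int_0^T\|\nabla^4 u\|_{L^2}^2\,dt\le C$; combined with $\sup_{[0,T]}\|\nabla^3 u\|_{L^2}\le C$, \eqref{cxb3}, and $\int_0^T\|\nabla^2 u_t\|_{L^2}^2\,dt\le C$, this yields \eqref{cxb45}.

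The step I expect to be the main obstacle is the variable-coefficient elliptic estimate for $\nabla^4 u$ together with its coupling to the transport estimates: because $\lambda=b\rho^\beta$ depends on the density, $\|\nabla^3\lambda(\rho)\|_{L^2}$ appears on the right-hand side of the bound for $\nabla^4 u$ while also being one of the quantities propagated by a transport equation, so $\nabla^3\rho$, $\nabla^3 P$, $\nabla^3\lambda(\rho)$ and $\nabla^4 u$ are genuinely coupled and must be closed jointly; one also has to make sure that the top-order contribution $(\lambda(\rho)+2\mu)\nabla^4 u$ arising in the elliptic step is absorbed via the uniform bound $\lambda(\rho)+2\mu\ge 2\mu$ coming from $0\le\rho\le 2\hat\rho$. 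On a general bounded domain this elliptic regularity must be routed through the div--curl Lemmas \ref{crle1}--\ref{crle2} (and the corresponding higher regularity for $\curl u$ under $\curl u\times n=0$) rather than standard interior $L^2$ theory, but apart from this bookkeeping the remaining estimates are routine.
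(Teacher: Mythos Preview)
Your proposal is correct and follows essentially the same route as the paper: first harvest $\sup_{[0,T]}\|u\|_{H^3}$ from \eqref{zj20}--\eqref{cxb25} and $\int_0^T\|\nabla^2 u_t\|_{L^2}^2\,dt$ from \eqref{zjh3}--\eqref{cxb25}, then derive the variable-coefficient div--curl elliptic estimate $\|\nabla^4 u\|_{L^2}\le C(1+\|\nabla u_t\|_{H^1}+\|\nabla^3 P\|_{L^2}+\|\nabla^3\lambda\|_{L^2})$ (the paper's \eqref{cxb53}), feed this into the $\nabla^3$-transport estimates for $P$, $\lambda$, $\rho$, close by Gronwall, and substitute back. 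The only cosmetic difference is that you package $\nabla^3\rho$, $\nabla^3 P$, $\nabla^3\lambda$ into a single functional $\Phi$ and close them jointly, whereas the paper closes $P$ first and then remarks that $\lambda$ and $\rho$ are handled similarly; your joint treatment is in fact slightly cleaner given that $\|\nabla^3\lambda\|_{L^2}$ appears on the right of \eqref{cxb53}.
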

 \begin{proof}
 Let's start with \eqref{cxb45}. \eqref{cxb2}, \eqref{zjh2} and \eqref{cxb25} show that
\be\la{zj19} \ba
\sup\limits_{0\le t\le T} \|\rho\dot{u}\|_{H^1}\le C.
\ea \ee
As a consequence of \eqref{cxb3}, \eqref{zj20} and \eqref{dt1}, one has
\be\la{zj21} \ba
\sup\limits_{0\le t\le T}\|u\|_{H^3}\le C.
 \ea\ee
Together with \eqref{cxb25} and \eqref{zjh3} yields
\be\la{cxb50}\ba
\int_0^T\|\nabla u_t\|_{H^1}^2dt\leq C.
\ea \ee
By Lemma \ref{crle1}, we get
\be\la{cxb53}\ba     &\|\nabla^4u\|_{L^2}\\
&\le C(\|\div u\|_{H^3}+\|\curl u\|_{H^3})\\
&\le C\left(\left\|\frac{F+P-\bar{P}}{\lambda(\rho)+2\mu}\right\|_{H^3}+\|\rho\dot{u}\|_{L^2}+\|\nabla(\rho\dot{u})\|_{L^2}+\|\nabla^2(\rho\dot{u})\|_{L^2}+\|\nabla u\|_{L^2}\right)\\
&\le C\left(\left\|\frac{F+P-\bar{P}}{\lambda(\rho)+2\mu}\right\|_{H^2}+\left\|\nabla^3\left(\frac{F+P-\bar{P}}{\lambda(\rho)+2\mu}\right)\right\|_{L^2}+\|\nabla^2(\rho\dot{u})\|_{L^2}+\|\nabla u_t\|_{L^2}+1\right)\\
&\le C(\|\nabla u_t\|_{H^1}+\|\nabla^3 \lambda\|_{L^2}+\|\nabla^3 u\|_{L^2}+\|\nabla^3 P\|_{L^2}+1)\\
&\le C(\|\nabla u_t\|_{H^1}+\|\nabla^3 \lambda\|_{L^2}+\|\nabla^3 P\|_{L^2}+1),
\ea \ee
where one has used \eqref{zj19}, \eqref{zj21} and the following simple facts:
\bnn\ba
\|\nabla^2(\rho u_t)\|_{L^2}&\leq C(\||\nabla^2\rho||u_t|\|_{L^2}+\||\nabla\rho||\nabla u_t|\|_{L^2}+\|\nabla^2 u_t\|_{L^2})\\
&\leq C(\|\nabla^2\rho\|_{L^2}\|\nabla u_t\|_{H^1}+\|\nabla\rho\|_{L^3}\|\nabla u_t\|_{L^6}+\|\nabla^2 u_t\|_{L^2})\\
&\leq C(1+\|\nabla u_t\|_{H^1}),
\ea\enn
and
\bnn\ba
\|\nabla^2(\rho u\cdot\nabla u)\|_{L^2}&\leq C(\||\nabla^2(\rho u)||\nabla u|\|_{L^2}+\||\nabla(\rho u)||\nabla^2 u|\|_{L^2}+\|\nabla^3 u\|_{L^2})\\
&\leq C(1+\|\nabla^2(\rho u)\|_{L^2}\|\nabla u\|_{H^2}+\|\nabla(\rho u)\|_{L^3}\|\nabla^2 u\|_{L^6}+\|\nabla^3 u\|_{L^2})\\
&\leq C(1+\|u\|_{L^\infty}\|\nabla^2\rho\|_{L^2}+\|\nabla\rho\|_{L^6}\|\nabla u\|_{L^3}+\|\nabla^2u\|_{L^2}+\|\nabla^3 u\|_{L^2})\\
&\leq C,
\ea\enn
due to \eqref{cxb18} and \eqref{zj21}. By using $\eqref{a1}_1$, \eqref{zj21}, \eqref{cxb18}, and \eqref{cxb53}, one may get that
\bnn\ba
&(\|\nabla^3P\|_{L^2}^2)_t\\&\leq C(\||\nabla^3u||\nabla P|\|_{L^2}+\||\nabla^2u||\nabla^2 P|\|_{L^2}+\||\nabla u||\nabla^3 P|\|_{L^2}+\|\nabla^4 u\|_{L^2})\|\nabla^3 P\|_{L^2}\\
&\leq C(\|\nabla^3u\|_{L^2}\|\nabla P\|_{H^2}+\|\nabla^2u\|_{L^3}\|\nabla^2 P\|_{L^6}+\|\nabla u\|_{L^\infty}\|\nabla^3 P\|_{L^2})\|\nabla^3 P\|_{L^2}\\
&\quad+C(1+\|\nabla^2u_t\|_{L^2}+\|\nabla^3 P\|_{L^2})\|\nabla^3 P\|_{L^2}\\
&\leq C(1+\|\nabla u_t\|_{H^1}+\|\nabla^3 \rho\|_{L^2}^2),
\ea\enn
which, together with Gronwall's inequality and \eqref{cxb50}, yields that
\be\la{zj22}\ba
\sup_{t\in[0,T]}\|\nabla^3P\|_{L^{2}}\le C.
\ea \ee
Similar to the proof of \eqref{zj22}, we get
\be\la{zj220}\ba
\sup_{t\in[0,T]}\|\nabla^3\lambda\|_{L^{2}}\le C.
\ea \ee
Collecting all these estimates \eqref{cxb50}, \eqref{cxb53}, \eqref{zj22}, \eqref{zj220} and \eqref{cxb18} shows
\be\la{zj23}\ba
\sup_{t\in[0,T]}\|P-\bar{P}\|_{H^{3}}+\int_0^{T}\|\nabla u\|_{H^3}^2dt\leq C.
\ea \ee
One can handle with $\rho-\bar{\rho}$ similarly.
Estimates \eqref{cxb45} thus follows from \eqref{zj21}, \eqref{cxb50} and \eqref{zj23}. Hence the proof of Lemma \ref{xle4} is finished.
\end{proof}
\begin{lemma}\la{xle5} For any $0<\tau<T$, there exists some positive constant $C(\tau)$ such that
\be \la{cxb58}
\sup_{\tau\le t\le T}(\|\nabla u_t\|_{H^1}
 +\|\nabla^4 u\|_{L^2})+\int_\tau^T\int|\nabla u_{tt}|^2dxdt\leq C(\tau).
 \ee
\end{lemma}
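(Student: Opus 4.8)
The plan is to run a time--weighted energy estimate on two time derivatives of the momentum equation and then recover $\|\nabla u_t\|_{H^1}$ and $\|\nabla^4 u\|_{L^2}$ from the elliptic (Hodge/div--curl) estimates already in hand, namely \eqref{zjh3} and \eqref{cxb53}, together with the bounds of Lemmas \ref{xle1}--\ref{xle4}.

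First, differentiate \eqref{zj1} once more in $t$ to get
\[
\rho u_{ttt}+\rho u\cdot\nabla u_{tt}-\nabla((\lambda(\rho)+2\mu)\div u_{tt})+\mu\nabla\times\curl u_{tt}=\mathcal R ,
\]
where $\mathcal R$ collects the remaining terms, each of which contains at most one factor among $u_{tt},\nabla^2 u_t,\rho_{tt},P_{tt},\lambda_{tt}$, the rest being quantities already controlled ($u,\nabla u,\rho,P,\lambda(\rho)$ in $W^{1,\infty}$ or $H^3$; $u_t,\nabla u_t,\rho_t,\nabla\rho_t,\lambda_t,\nabla\lambda_t$ bounded). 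Multiplying by $\sigma^2 u_{tt}$ and integrating over $\Omega$, using $\rho u_{ttt}\cdot u_{tt}=\tfrac12\partial_t(\rho|u_{tt}|^2)-\tfrac12\rho_t|u_{tt}|^2$ and $\div(\rho u)=-\rho_t$ to reorganize the convective term, and noting that every boundary integral produced by integration by parts vanishes because differentiating the slip conditions \eqref{ch1} in $t$ gives $u_{tt}\cdot n=0$ and $\curl u_{tt}\times n=0$ on $\partial\Omega$, one arrives at
\[
\frac{d}{dt}\Big(\frac{\sigma^2}{2}\|\rho^{1/2}u_{tt}\|_{L^2}^2\Big)+\sigma^2\int\big((\lambda(\rho)+2\mu)(\div u_{tt})^2+\mu|\curl u_{tt}|^2\big)dx\ \le\ \sigma\sigma'\|\rho^{1/2}u_{tt}\|_{L^2}^2+\sigma^2\int\mathcal R\cdot u_{tt}\,dx ,
\]
and the left side dominates $c\,\sigma^2\|\nabla u_{tt}\|_{L^2}^2$ by Lemma \ref{crle1}.

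Next, estimate $\int\mathcal R\cdot u_{tt}\,dx$ term by term. The potentially dangerous contributions are those that, bounded crudely, would produce $\|\nabla u_{tt}\|_{L^2}^2$ without a small coefficient, notably $\int\rho_t|u_{tt}|^2dx$ and the terms of the form $\int\rho_t(\cdots)\cdot u_{tt}\,dx$ (and likewise the $\rho_{tt}$ term); these are treated by substituting $\rho_t=-\div(\rho u)$ (resp. $\rho_{tt}=-\div(\rho_t u+\rho u_t)$) and integrating by parts, which makes $\rho$ (bounded by $2\hat\rho$) rather than $\nabla\rho$ multiply the top-order factors, after which writing $\rho=\rho^{1/2}\cdot\rho^{1/2}$ bounds them by $\delta\|\nabla u_{tt}\|_{L^2}^2+C_\delta\|\rho^{1/2}u_{tt}\|_{L^2}^2$ plus $t$--integrable quantities. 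All other terms are handled by H\"older, Sobolev and Gagliardo--Nirenberg (Lemma \ref{l1}, noting $\|u_{tt}\|_{L^6}\le C\|\nabla u_{tt}\|_{L^2}$ since $u_{tt}\cdot n|_{\partial\Omega}=0$), using $\|\nabla^2 u_t\|_{L^2}\le C(\|\rho^{1/2}u_{tt}\|_{L^2}+1)$ from \eqref{zjh3} together with $\sup_{[0,T]}\|\nabla u_t\|_{L^2}\le C$ from \eqref{cxb25}, the bounds $\|\lambda_t\|_{L^\infty}+\|\nabla\lambda_t\|_{L^2}+\|\lambda_{tt}\|_{L^2}\le C$ (from \eqref{zjh1}, \eqref{zj14} and the $H^3$ bounds of Lemma \ref{xle4}), and the integrability $\int_0^T(\|\nabla u_t\|_{H^1}^2+\|\nabla\dot u\|_{L^2}^2+\|\rho_{tt}\|_{L^2}^2+\|P_{tt}\|_{L^2}^2)\,dt\le C$ from Lemmas \ref{xle1}--\ref{xle4}. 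After absorbing the $\delta\sigma^2\|\nabla u_{tt}\|_{L^2}^2$ terms, the inequality reads $\frac{d}{dt}(\sigma^2\|\rho^{1/2}u_{tt}\|_{L^2}^2)+c\,\sigma^2\|\nabla u_{tt}\|_{L^2}^2\le C\sigma^2\|\rho^{1/2}u_{tt}\|_{L^2}^2+g(t)$ with $g\in L^1(0,T)$ (incorporating the $\sigma\sigma'$ term via \eqref{cxb25}); since $\sigma^2(0)=0$ and $\int_0^T\sigma^2\|\rho^{1/2}u_{tt}\|_{L^2}^2\,dt\le C$ by \eqref{cxb25}, Gronwall's inequality gives $\sup_{[0,T]}\sigma^2\|\rho^{1/2}u_{tt}\|_{L^2}^2+\int_0^T\sigma^2\|\nabla u_{tt}\|_{L^2}^2\,dt\le C$. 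Restricting to $t\ge\tau$, where $\sigma(t)\ge\min\{1,\tau\}>0$, yields $\sup_{[\tau,T]}\|\rho^{1/2}u_{tt}\|_{L^2}^2+\int_\tau^T\|\nabla u_{tt}\|_{L^2}^2\,dt\le C(\tau)$.

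Finally, \eqref{zjh3} together with $\sup\|\nabla u_t\|_{L^2}\le C$ gives $\sup_{[\tau,T]}\|\nabla^2 u_t\|_{L^2}\le C(\tau)$, hence $\sup_{[\tau,T]}\|\nabla u_t\|_{H^1}\le C(\tau)$; then \eqref{cxb53}, combined with $\sup_{[0,T]}(\|\nabla^3\lambda(\rho)\|_{L^2}+\|\nabla^3 P\|_{L^2})\le C$ from Lemma \ref{xle4}, gives $\sup_{[\tau,T]}\|\nabla^4 u\|_{L^2}\le C(\tau)$, which with the above completes \eqref{cxb58}. I expect the main obstacle to be the bookkeeping of $\int\mathcal R\cdot u_{tt}\,dx$: there are many terms, and the crucial point that must be carried out carefully is that every occurrence of $\rho_t$ (and $\rho_{tt}$) be rewritten through the continuity equation, so that the genuinely top-order pieces always appear as $\|\nabla u_{tt}\|_{L^2}$ paired with $\|\rho^{1/2}u_{tt}\|_{L^2}$ and can thus be absorbed into the dissipation, with everything else integrable in time by the lower-order estimates.
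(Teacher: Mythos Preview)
Your proposal is correct and follows essentially the same strategy as the paper: differentiate the momentum equation twice in $t$, test with $u_{tt}$, estimate the right-hand side term by term (rewriting $\rho_t=-\div(\rho u)$ and $\rho_{tt}$ via the continuity equation, using $u_{tt}\cdot n=0$, $\curl u_{tt}\times n=0$ on $\partial\Omega$), apply Gronwall, and then recover $\|\nabla u_t\|_{H^1}$ and $\|\nabla^4u\|_{L^2}$ from \eqref{zjh3} and \eqref{cxb53}. The only cosmetic difference is that you carry the time weight $\sigma^2$ so that the ``initial'' contribution at $t=0$ vanishes, whereas the paper tests with the unweighted $u_{tt}$ and invokes $\int_0^T\|\rho^{1/2}u_{tt}\|_{L^2}^2\,dt\le C$ from \eqref{cxb25} to pick up the estimate at time $\tau$ directly; both devices are standard and equivalent here.
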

\begin{proof} Differentiating $(\ref{a1})_2$ with respect to $t$ twice, we have
 \be\la{cxb59}\ba
&\n u_{ttt}+\n u\cdot\na u_{tt}-\nabla((\lambda(\rho)+2\mu){\rm div}u_{tt})+\mu\nabla\times\curl u_{tt}\\
&= 2{\rm div}(\n u)u_{tt}
+{\rm div}(\n u)_{t}u_t-2(\n u)_t\cdot\na u_t-(\n_{tt} u+2\n_t u_t)
\cdot\na u\\& \quad- \n u_{tt}\cdot\na u-\na P_{tt}+2\nabla(\lambda_t\div u_t)+\nabla(\lambda_{tt}\div u).
 \ea\ee

Then, multiplying (\ref{cxb59}) by $u_{tt}$ and integrating over $\Omega$ lead to
\be \la{cxb60}\ba
&\frac{1}{2}\frac{d}{dt}\int_{ }\rho
|u_{tt}|^2dx+\int_{ }(\lambda(\rho)+2\mu)(\div u_{tt})^2dx+\mu\int_{ }|\curl u_{tt}|^2dx \\
&=-2\int_{ }  \lambda_t\div u_t\div u_{tt}dx-\int_{ }\lambda_{tt}\div u\div u_{tt}dx\\
&\quad -4\int_{ }\rho u\cdot\nabla u_{tt}\cdot u_{tt}dx-\int_{ }(\rho u)_t\cdot(\nabla(u_t\cdot u_{tt})+2\nabla u_t\cdot u_{tt})dx\\
&\quad -\int_{ }(\rho u)_t\cdot\nabla(u\cdot\nabla u\cdot u_{tt})dx-2\int_{ }\rho_tu_t\cdot\nabla u\cdot u_{tt}dx\\
&\quad -\int_{ }\rho u_{tt}\cdot\nabla u\cdot u_{tt}dx+\int_{ } P_{tt}\div u_{tt}dx\\
&\triangleq\sum_{i=1}^8J_i.
\ea\ee

Let us estimate $J_i$ for $i=1,2,\cdots,8$.

First, we deduce from \eqref{hzj6} that
\be \la{zj25}\ba
|J_1|&\leq C\|\lambda_t\|_{L^6}\|\nabla u_t\|_{L^3}\|\nabla u_{tt}\|_{L^2}\\
&\leq \varepsilon\|\nabla u_{tt}\|_{L^2}^2+C\|\nabla u_t\|_{H^1}^2.
\ea\ee

Next, Cauchy's inequality, \eqref{zj20} and \eqref{cxb45} give
\be \la{zj26}\ba
|J_2|&\leq \varepsilon\|\nabla u_{tt}\|_{L^2}^2+C\|\nabla u\|_{L^\infty}^2\|\lambda_{tt}\|_{L^2}^2\\
&\leq \varepsilon\|\nabla u_{tt}\|_{L^2}^2+C\|\lambda_{tt}\|_{L^2}^2.
\ea\ee
Using \eqref{zjh1}, \eqref{cxb45} and \eqref{zj14}, we estimate the last term on the right-hand side of \eqref{zj26} as follows:
\be \la{zj27}\ba
\|\lambda_{tt}\|_{L^2}&\leq C(\||u_t||\nabla\lambda|\|_{L^2}+\||u||\nabla\lambda_t|\|_{L^2}+\|\lambda_{t}\div u\|_{L^2}+\|\lambda\div u_t\|_{L^2})\\
&\leq C(\|u_t\|_{L^3}\|\nabla\lambda\|_{L^6}+\|u\|_{L^\infty}\|\nabla\lambda_t\|_{L^2}+\|\nabla u\|_{L^\infty}\|\lambda_{t}\|_{L^2}+\|\nabla u_t\|_{L^2})\\
&\leq C(\|\nabla u_t\|_{L^2}+1).
\ea\ee
Putting \eqref{zj27} into \eqref{zj26} gives
\be \la{zj28}\ba
|J_2|\leq \varepsilon\|\nabla u_{tt}\|_{L^2}^2+C(\|\nabla u_t\|_{L^2}^4+1).
\ea\ee

Next, the combination of the Cauchy's inequality with \eqref{cxb3} yields that
\be \la{zj29}\ba
|J_3|\leq C(\hat{\rho})\|u\|_{L^\infty}\|\rho^{1/2}u_{tt}\|_{L^2}\|\nabla u_{tt}\|_{L^2}\leq\varepsilon\|\nabla u_{tt}\|_{L^2}^2+C\|\rho^{1/2}u_{tt}\|_{L^2}^2.
\ea\ee
We obtain from H\"{o}lder's inequality and \eqref{g1} that
\be \la{zj30}\ba
|J_4|&\leq C\|(\rho u)_t\|_{L^6}(\|u_{tt}\|_{L^6}\|\nabla u_t\|_{L^2}+\|u_t\|_{L^6}\|\nabla u_{tt}\|_{L^2})\\
&\leq C(1+\|\nabla u_t\|_{L^2})\|\nabla u_{tt}\|_{L^2}\|\nabla u_t\|_{L^2}\\
&\leq \varepsilon\|\nabla u_{tt}\|_{L^2}^2+C(1+\|\nabla u_t\|_{L^2}^4).
\ea\ee

Then, it follows from \eqref{cxb3} and \eqref{g1} that
\be \la{zj31}\ba
|J_5|&\leq C\|(\rho u)_t\|_{L^6}(\|\nabla u\|_{L^4}^2\|u_{tt}\|_{L^6}+\|u\|_{L^\infty}\|\nabla^2 u\|_{L^2}\|u_{tt}\|_{L^6}\\
&\quad+\|u\|_{L^\infty}\|\nabla u\|_{L^3}\|\nabla u_{tt}\|_{L^2})\\
&\leq C(1+\|\nabla u_t\|_{L^2})\|\nabla u_{tt}\|_{L^2}\\
&\leq \varepsilon\|\nabla u_{tt}\|_{L^2}^2+C(1+\|\nabla u_t\|_{L^2}^2).
\ea\ee

Next, H\"{o}lder's inequality together with \eqref{g1}, \eqref{cxb3}, and \eqref{cxb24} gives
\be \la{zj32}\ba
|J_6|&\leq C\int_{}|\rho_t||u_t||\nabla u||u_{tt}|dx\\
&\leq C\|\rho_t\|_{L^6}\|u_t\|_{L^6}\|\nabla u\|_{L^2}\|u_{tt}\|_{L^6}\\
&\leq C\|\nabla u_t\|_{L^2}\|\nabla u_{tt}\|_{L^2}\\
&\leq \varepsilon\|\nabla u_{tt}\|_{L^2}^2+C\|\nabla u_t\|_{L^2}^2.
\ea\ee

Finally, similar to \eqref{zj27}, we have
\be \la{zj33}\ba
\|P_{tt}\|_{L^2}\leq C(1+\|\nabla u_t\|_{L^2}),
\ea\ee
which together with direct calculations gives
\be  \la{zj34}\ba
|J_7|+|J_8|&\le C(\hat{\rho})\|\nabla u\|_{L^\infty}\int_{}\rho|u_{tt}|^2dx+\varepsilon\int_{}|\nabla u_{tt}|^2dx+C\|P_{tt}\|_{L^2}^2\\
&\le C(\hat{\rho})\|\nabla u\|_{L^\infty}\|\rho^{1/2}u_{tt}\|_{L^2}^2+\varepsilon\|\nabla u_{tt}\|_{L^2}^2+C(1+\|\nabla u_t\|_{L^2}^2).
\ea\ee
Substituting \eqref{zj25}, \eqref{zj28}, \eqref{zj29}, \eqref{zj30}, \eqref{zj31}, \eqref{zj32}, \eqref{zj34} into \eqref{cxb60} and choosing $\varepsilon$ suitably small, one obtains by using \eqref{cxb25}, \eqref{cxb45} and Gronwall's inequality that
\be  \la{zj36}\ba
\sup_{\tau\le t\le T}\int_{}\rho|u_{tt}|^2dx+\int_{\tau}^T\int|\nabla u_{tt}|^2dxdt\leq C(\tau),
\ea\ee
which, together with \eqref{cxb25} and \eqref{zjh3}, yields that
\be \la{zj37}
\sup_{\tau\le t\le T}\|\nabla u_t\|_{H^1}+\int_\tau^T\int|\nabla u_{tt}|^2dxdt\leq C(\tau).
 \ee
Now, \eqref{cxb58} follows from \eqref{cxb53}, \eqref{zj37}, and \eqref{cxb44}. We finish the proof of Lemma \ref{xle5}.
\end{proof}

\section{\la{se6}Proof of  Theorem  \ref{th1}}

Now that all the a priori estimates what we need have been obtained, we will prove the main results of this paper.

\begin{proof} By Lemma \ref{loc1}, there exists a
$T_*>0$ such that the  system (\ref{a1})-(\ref{ch1}) has a unique classical solution $(\rho,u)$ on $\Omega\times
(0,T_*]$. Now we use the a priori estimates, Proposition \ref{pr1} and Lemmas \ref{xle3}-\ref{xle5} to extend the local
solution $(\rho,u)$ to be a global one.

First, it is easy to check that
$$ A_1(0)+A_2(0)=0, \,\, 0\leq\rho_0\leq \hat{\rho},\,\, A_3(0)\leq M.$$
Therefore, there exists a
$T_1\in(0,T_*]$ such that
\be\la{dlbh1}\ba
0\leq\rho\leq2\hat{\rho},\,\,A_1(T)+A_2(T)\leq 2C_0^{1/3}, \,\, A_3(\sigma(T))\leq 3K
\ea\ee
holds for $T=T_1.$

Next, we set
\bn \la{dlbh2}
T^*=\sup\{T\,|\,{\rm (\ref{dlbh1}) \ holds}\}.
\en
Then $T^*\geq T_1>0$. Hence, for any $0<\tau<T\leq T^*$
with $T$ finite, it follows from Lemmas \ref{xle3}-\ref{xle5}
that
 \be \la{dlbh3}\ba
 \nabla u_t,\,\,\nabla^3u \in C([\tau ,T]; L^4),\quad
 \na u,\na^2u \in C\left([\tau ,T];
 C (\bar{\Omega})\right),\ea\ee
 where one has taken advantage of  the standard
embedding
$$L^\infty(\tau ,T;H^1)\cap H^1(\tau ,T;H^{-1})\hookrightarrow
C\left([\tau ,T];L^q\right),\quad\mbox{ for any } q\in [1,6).  $$
Due to (\ref{cxb17}), (\ref{cxb25}), (\ref{cxb58}) and $(\ref{a1})_1$,
we obtain
\be\ba
&\int_{\tau}^T \left|\left(\int\n|u_t|^2dx\right)_t\right|dt\\
&\le\int_{\tau}^T\left(\|  \n_t  |u_t|^2 \|_{L^1}+2\|  \n  u_t\cdot u_{tt} \|_{L^1}\right)dt\\
&\le C\int_{\tau}^T \left( \| \n|\div u||u_t|^2 \|_{L^1}+\|  |u||\na \n| |u_t|^2 \|_{L^1}+ \|\rho^{\frac{1}{2}}  u_t
\|_{L^2}\|\rho^{\frac{1}{2}}u_{tt} \|_{L^2}\right)dt\\
&\le C\int_{\tau}^T\left( \| \n |u_t|^2 \|_{L^1}\|\na u\|_{L^\infty}+\|  u\|_{L^6}\|\na\n\|_{L^2} \|u_t  \|^2_{L^6}+  \|\rho^{\frac{1}{2}}u_{tt} \|_{L^2}\right)dt\\
&\le C,\ea\ee
which together with \eqref{dlbh3} yields
\be\la{dlbh4} \rho^{1/2}u_t, \quad  \nabla\dot{u}\in C([\tau,T],L^2).\ee
Finally, we claim that \be \la{dlbh5}T^*=\infty.\ee Otherwise,
$T^*<\infty$. By Proposition \ref{pr1}, it holds that
\be\la{dlbh6}\ba
0\leq\rho\leq\frac{7}{4}\hat{\rho} ,\,\,\,A_1(T^*)+A_2(T^*)\leq C_0^{1/3},\,\,\, A_3(\sigma(T^*))\leq 2K .
\ea\ee
It follows from Lemmas \ref{xle3}, \ref{xle4}, \ref{xle5} and
(\ref{dlbh4}) that $(\n(x,T^*),u(x,T^*))$ satisfies
the initial data condition (\ref{dt1})-(\ref{dt3}),
where  $g(x)\triangleq\n^{1/2}\dot u(x, T^*),\,\,x\in \Omega.$ Thus, Lemma
\ref{loc1} implies that there exists some $T^{**}>T^*$ such that
(\ref{dlbh1}) holds for $T=T^{**}$, which contradicts \eqref{dlbh2}.
As a result, $0<T_1<T^*=\infty$. By Lemmas \ref{loc1} and \ref{xle3}-\ref{xle5}, it indicates that $(\rho,u)$ is in fact the unique globally classical solution.

It remains to prove \eqref{qa1w}. Integrating $\eqref{a1}_1$ over $\O\times (0,T)$ and using \eqref{ch1} yields that \be \la{bz11}\bar\n=\frac{1}{|\O|}\int\n (x,t)dx\equiv \frac{1}{|\O|} \int \n_0dx. \ee

For $G(\rho)$, there exists a suitably small positive constant $\tilde{C} <1$ depending only on $a,\,\gamma,\,\bar{\rho}_0,$ and $\hat \n$ such that for any $\rho\in [0,2\hat\n]$,
\be\label{gine1}  \tilde{C}^2( \rho-\bar{\rho})^2\le \tilde{C}  G(\rho)  \leq    (\rho^\gamma-\bar{\rho}^\gamma)( \rho - \bar{\rho}). \ee
Consider the problem
\bn\la{e4800}\begin{cases}
{\rm div}\phi=\rho-\bar{\rho}, \,\,\,\,  &x\in\Omega, \\
\phi=0,\,\,\,&x\in{\partial\Omega},
\end{cases} \en
where $\Omega$ is a bounded domain in $R^{3}$ with smooth boundary.

Multiplying $\eqref{a1}_2$ by $\phi$, we get
\be\la{c59} \ba &
\int(P-P(\bar\n))(\n-\bar\n) dx \\&= \left(\int\rho u\cdot\phi dx\right)_t-\int\rho u\cdot\nabla\phi\cdot udx - \int\rho u\cdot\phi_t  dx \\
& \quad  +\mu\int\nabla u\cdot\nabla\phi dx +\int(\lambda(\rho)+\mu)(\n-\bar\n)\div udx \\
& \leq \left(\int\rho u\cdot\phi dx\right)_t+C(\hat{\rho})\|u\|_{L^{4}}^{2}\|\n-\bar\n\|_{L^2} +C(\hat{\rho})\|\rho u\|_{L^2}^2\\
& \quad  +C\|\rho-\bar{\rho}\|_{L^2}\|\nabla u\|_{L^2} \\
& \leq \left(\int\rho u\cdot\phi dx\right)_t+\de \|\n-\bar\n\|_{L^2}^2 +C(\de)\|\na u\|_{L^2}^2,
\ea\ee
which, along with \eqref{gine1} and \eqref{tdu1}, leads to
\be \la{gine2} \ba
a\tilde{C}\int G(\rho)dx&\leq a\int(\rho^\gamma-\bar{\rho}^\gamma)( \rho - \bar{\rho})dx\\
&\leq 2\left(\int\rho u\cdot\phi dx\right)_t+C_1H(t),
\ea\ee
where $H(t)\triangleq \int(\lambda+2\mu)(\div u)^{2}dx+\mu\int|\curl u|^{2}dx.$

Moreover, it follows from \eqref{gine1} and Young's inequality that
\be \la{c511} \ba
\left|\int\rho u\cdot\phi dx\right|\leq C_2\left(\frac{1}{2}\|\sqrt{\rho} u\|^2_{L^2}+\int G(\rho)dx\right),
\ea\ee
 which gives
\be \la{c512} \ba
\frac{1}{2}\left(\frac{1}{2}\|\sqrt{\rho} u\|^2_{L^2}+\int G(\rho)dx\right)\leq W(t)\leq \frac{3}{2}\left(\frac{1}{2}\|\sqrt{\rho} u\|^2_{L^2}+\int G(\rho)dx\right),\ea\ee
 where $$W(t)=\int \left(\frac{1}{2}\rho |u|^2+G(\rho)\right)dx-\delta_0\int\rho u\cdot\phi dx,$$ with $\delta_0=\min\{\frac{1}{2C_1},\frac{1}{2C_2}\}.$

Adding \eqref{gine2} multiplied by $\de_0 $ to \eqref{buchong}
and using \bnn \int\n |u|^2dx\leq C(\hat{\rho})\|\na u\|_{L^2}^2\leq C_3H(t),\enn we obtain for a suitably small constant $\delta_1=\delta_1(a,\delta_0, \tilde{C}_0,C_3)$,
 \bnn W'(t)+2\delta_1W(t)\leq 0,\enn which together with \eqref{c512} yields that for any $t\geq0$,
\begin{equation}\label{c513}
\int\left(\frac{1}{2}\rho|u|^2+G(\rho)\right)dx\leq 4C_0e^{-2\delta_1t}.
\end{equation}
By \eqref{buchong}, we have
\be \la{c514} \ba
\int_0^\infty H(t)e^{\delta_1 t} dt\leq C.
\ea\ee

Choosing $m=0$ in \eqref{I0}, by \eqref{I1}, \eqref{I2} and \eqref{I3}, we obtain
\be\la{c515}\ba
&\left(H(t)-2\int(P-P(\bar{\rho}))\,\div udx\right)_{t}+\frac{1}{2}\|\sqrt{\rho}\dot{u}\|^2_{L^2}\\
&\leq C\left(H(t)+\int G(\rho)dx\right).
\ea \ee
Notice that $$\|P-\bar{P}\|_{L^2}^2\leq C\|P-P(\bar{\rho})\|_{L^2}^2\leq C\int G(\rho)dx.$$
Multiplying \eqref{c515} by $e^{\delta_1 t}$, and using the fact
\bnn\ba
\left|\int(P-P(\bar{\rho})\,\div udx\right|\leq C\int G(\rho)dx+\frac{1}{4}H(t),
\ea\enn
we get
\be\la{c516}\ba
&\left(e^{\delta_1 t}H(t)-2e^{\delta_1 t}\int(P-P(\bar{\rho})\,\div udx\right)_{t}+\frac{1}{2}e^{\delta_1 t}\|\sqrt{\rho}\dot{u}\|^2_{L^2}\\
&\leq Ce^{\delta_1 t}\left(H(t)+\int G(\rho)dx\right),
\ea \ee
which, together with \eqref{c513} and \eqref{c514}, yields that for any $t>0$,
\be\la{c517}\ba
\|\nabla u\|_{L^2}^2\leq Ce^{-\delta_1 t},
\ea \ee
and
\be\la{c518}\ba
\int_0^\infty e^{\delta_1 t}\|\sqrt{\rho}\dot{u}\|^2_{L^2}dt\leq C.
\ea \ee
By \eqref{ax401}, \eqref{h99}, \eqref{c517} and \eqref{c518}, a direct calculation leads to
\be\la{c519}\ba
\|\sqrt{\rho}\dot{u}\|^2_{L^2}\leq Ce^{-\delta_1 t},\,\,t\geq1.
\ea \ee

Finally, together with \eqref{c513}, \eqref{c517}, \eqref{c519} and \eqref{h18}, we obtain \eqref{qa1w} for some positive constant $\tilde{\eta}\leq\delta_1$ depending only on $\mu,$  $\lambda,$  $\gamma,$ $a$, $\on$, $\hat{\rho}$, $M$, $\Omega$, $p$, $r$ and $C_0$ and  finish the proof.
\end{proof}
\begin{proof}
Suppose $T>0$, we introduce the Lagrangian coordinates
  \be \la{c61}  \begin{cases}\frac{\partial}{\partial \tau}X(\tau; t,x) =u(X(\tau; t,x),\tau),\,\,\,\, &0\leq \tau\leq T,\\
 X(t;t,x)=x, \,\,\,\, &0\leq t\leq T,\,x\in\bar{\Omega}.\end{cases}\ee
By \eqref{dt6}, it is easy to find that \eqref{c61} is well-defined. \eqref{c61} together with $\eqref{a1}_1$ shows
 \be\la{c62}\ba
\rho(x,t)=\rho_0(X(0; t, x)) \exp \left\{-\int_0^t\div u(X(\tau;t, x),\tau)d\tau\right\}.
\ea \ee

If there exists some point $x_0\in \Omega$ such that $\n_0(x_0)=0,$ then for any $t>0$, $X(0; t, x_0(t))=x_0$. Hence, for any $t\geq 0,$ $\rho(x_0(t),t)\equiv 0$ due to \eqref{c62}. As a result, Gagliardo-Nirenberg's inequality shows that for any $\tilde{q}\in(1,\infty)$ and $ \tilde{r}\in  (3,\infty),$
\be\la{c63}\ba\bar{\rho}_0\equiv\bar\n\leq\|\rho-\bar{\rho}\|_{C\left(\ol{\O }\right)} \le C
\|\rho-\bar{\rho}\|_{L^{\tilde{q}}}^{\tilde{\theta}}\|\na \rho\|_{L^{\tilde{r}}}^{1-\tilde{\theta}}
\ea\ee
where $\tilde{\theta}=\tilde{q}(\tilde{r}-3)/(3\tilde{r}+\tilde{q}(\tilde{r}-3))$. Together with \eqref{qa1w}, we gives \eqref{qa2w}. This completes the proof.
\end{proof}

\end{document}